\theoremstyle{plain}
\newtheorem{thm}{Theorem}[section]
\newtheorem{lemma}[thm]{Lemma}
\newtheorem{prop}[thm]{Proposition}
\newtheorem{corol}[thm]{Corollary}
\theoremstyle{remark}
\newtheorem{rem}[thm]{Remark}
\theoremstyle{definition}
\newtheorem{defn}[thm]{Definition}
\newtheorem*{convention}{Convention}
\newtheorem*{notation}{Notation}
\numberwithin{equation}{section}
\newcommand{\R}{\mathbb{R}}
\newcommand{\C}{\mathbb{C}}
\renewcommand{\H}{\mathbb{H}}
\renewcommand{\P}{\mathbb{P}}
\newcommand{\del}{\partial}
\newcommand{\unC}{\underline{\C}}
\newcommand{\fo}{\mathfrak{o}}
\newcommand{\fsl}{\mathfrak{sl}}
\newcommand{\fgl}{\mathfrak{gl}}
\newcommand{\rSL}{\mathrm{SL}}
\newcommand{\rGL}{\mathrm{GL}}
\newcommand{\rO}{\mathrm{O}}
\newcommand{\rSO}{\mathrm{SO}}
\newcommand{\dom}{\mathrm{dom}}
\newcommand{\cL}{\mathcal{L}}
\newcommand{\cD}{\mathcal{D}}
\newcommand{\cN}{\mathcal{N}}
\newcommand{\cA}{\mathcal{A}}
\newcommand{\cQ}{\mathcal{Q}}
\newcommand{\Ad}{\mathrm{Ad}}
\newcommand{\ad}{\mathrm{ad}}
\newcommand{\Res}{\mathrm{Res}}
\newcommand{\hd}{\hat{d}}
\newcommand{\hA}{\hat{A}}
\newcommand{\hV}{\hat{V}}
\newcommand{\hL}{\hat{\Lambda}}
\newcommand{\hq}{\hat{q}}
\newcommand{\hS}{\hat{S}}
\newcommand{\set}[1]{\{#1\}}
\newcommand{\End}{\mathrm{End}}
\newcommand{\trace}{\mathrm{trace}}
\newcommand{\II}{\mathrm{I\kern-1ptI}}
\newcommand{\half}{\tfrac12}   
\DeclareSymbolFont{script}{U}{eus}{m}{n}
\DeclareSymbolFontAlphabet{\mathscr}{script}
\DeclareMathSymbol{\EuWedge}{0}{script}{"5E}
\newcommand{\Wedge}{\EuWedge}
\begin{document}

\title{Dressing transformations of constrained Willmore surfaces}

\author[F.E. Burstall]{Francis E. Burstall}
\address{Department of Mathematical Sciences\\ University of Bath\\
Bath BA2 7AY\\UK} \email{feb@maths.bath.ac.uk}
\author[A.C. Quintino]{\'Aurea C. Quintino}
\address{Centro de Matem\'{a}tica e Aplica\c{c}\~{o}es Fundamentais da Universidade de Lisboa\\
Avenida Professor Gama Pinto\\ 2\\ 1649-003 Lisboa\\ Portugal}
\email{aurea@ptmat.fc.ul.pt}

\subjclass[2000]{53C43,53A30}

\begin{abstract}
We use the dressing method to construct transformations of
constrained Willmore surfaces in arbitrary codimension.  An adaptation
of the Terng--Uhlenbeck theory of dressing by simple factors to this
context leads us to define B\"acklund transforms of these surfaces
for which we prove Bianchi permutability.  Specialising to
codimension $2$, we generalise the Darboux transforms of Willmore
surfaces via Riccati equations, due to
Burstall--Ferus--Leschke--Pedit--Pinkall, to the constrained Willmore
case and show that they amount to our B\"acklund transforms with real
spectral parameter.
\end{abstract}
\maketitle
\section*{Introduction}

The Willmore functional is a conformally invariant functional,
defined on immersed surfaces in the conformal $n$-sphere, with a
long, interesting history \cite{Bry84,Bla29,Ger31,Tho23,Wil65}.  Its
extrema are the \emph{Willmore surfaces} and these have attracted a
lot of attention in recent years, in large part due to interest in
the celebrated Willmore conjecture, now affirmed by Marques--Neves
\cite{MarNev12}.

One approach to Willmore surfaces is via a certain Gauss map: to any
surface in $S^n$, one may associate \cite{Bla29,Bry84} its
\emph{central sphere congruence}.  Geometrically, this is a family of
$2$-spheres tangent to the surface and sharing mean curvature vectors
with it.  Alternatively, it may be viewed as a map into the space of
$2$-spheres which is a certain Grassmannian of $4$-planes
\cite{Roz48}.  A key result is that a surface is Willmore if and only
if its central sphere congruence is a harmonic map
\cite{Bla29,Bry84,Eji88,Rig87} and then the well-developed theory of
harmonic maps applies.  In particular, the machinery of integrable
systems theory becomes available.

The starting point of the integrable systems approach to harmonic
maps is the observation \cite{Poh76,Uhl89,ZakMik78,ZahSab79} that the
harmonic map equations amount to the flatness of a family of flat
connections depending on a \emph{spectral parameter}.  This structure
gives rise to symmetries, of which we shall say much more below, and
constructions via completely integrable Hamiltonian systems and
algebraic curves \cite{BurFerPedPin93,Hit90}.

This zero curvature representation of the harmonic map equations
allows one to deduce two kinds of symmetry.  Firstly, harmonic maps
admit a \emph{spectral deformation} \cite{Uhl89} by exploiting a
scaling freedom in the spectral parameter.  Secondly, harmonic maps
admit \emph{dressing transformations} \cite{TerUhl00,Uhl89} which
arise by applying carefully chosen gauge transformations to the
family of flat connections.

A larger class of surfaces arises when one imposes the weaker
requirement that a surface extremise the Willmore functional only
with respect to variations which preserve the conformal structure:
these are the \emph{constrained Willmore surfaces}.  Now the
Euler--Lagrange equations include a Lagrange multiplier in the form
of a holomorphic quadratic differential \cite{BohPetPin08} and, while
the central sphere congruence is no longer harmonic, a loop of flat
connections is still available \cite{Boh10,BurPedPin02}.

It is the purpose of the present paper to apply the theory of
dressing transformations to constrained Willmore surfaces in $S^{n}$.
Two considerations prevent this from being a routine exercise in the
existing theory for harmonic maps: first, that theory applies to the
central sphere congruence qua map into a Grassmannian and it requires
some care to see that the transformed map is the central sphere
congruence of a surface at all, let alone a constrained Willmore one.
Moreover, the theory must be extended somewhat to cover the possibly
non-harmonic central sphere congruences of constrained Willmore
surfaces.  

To address the second problem, we introduce and study the notion of a
\emph{$k$-perturbed harmonic map} which includes (with $k=2$) the
central sphere congruences of constrained Willmore surfaces.  The
idea is that the flat connections associated to harmonic maps are
characterised by having simple poles (with respect to the spectral
parameter) at $0$ and $\infty$.  Relaxing this requirement to allow
poles of order $k$ leads us to $k$-perturbed harmonic maps.  These
maps are a geometric incarnation of the $k$-th elliptic integrable
system associated with a symmetric space in the sense of Terng
\cite{Ter08} (see also Khemar \cite{Khe12}) and also fit into the
framework of Brander--Dorfmeister \cite{BraDor09}.  As such,
$k$-perturbed harmonic maps make sense with any symmetric space as
target but we focus on the space of $2$-spheres that is of immediate
interest to us.  We define spectral deformations and dressing
transformations of such maps, the latter mediated by gauge
transformations we call \emph{dressing gauges}.

We define an energy density for $k$-perturbed harmonic maps which
coincides with the harmonic map energy density when $k=1$ and prove
that this changes by an explicit exact term under our dressing
transformations.  This result, of possible independent interest,
appears to be new, even for harmonic maps.

We then show that our dressing transformations preserve the class of
central sphere congruences of constrained Willmore surfaces.  In this
setting, we see that the Willmore density changes by an exact term
only and, moreover, the  holomorphic quadratic differential that acts as
Lagrange multiplier is fixed by the dressing transformations.

We complete our general theory by adapting the Terng--Uhlenbeck
\cite{TerUhl00} theory of dressing by simple factors to our context.
Thus we construct dressing gauges algebraically from parallel
subbundles of one of the given flat connections.  We call the
resulting dressing transformations \emph{B\"acklund transformations}
and prove a Bianchi permutability theorem for them.

Finally, we restrict attention to the case of surfaces in $S^4$.
Here, there is a well-developed quaternionic formalism
\cite{Boh10,BohPedPin09a,BurFerLesPed02,FerLesPedPin01} that can be
brought to bear.  In particular, arguing by analogy with Darboux
transforms of constant mean curvature surfaces, a Darboux transform of
Willmore surfaces in $S^4$ is presented in \cite{BurFerLesPed02}
which proceeds by solving a Riccati equation.  For us, the key point
about this codimension $2$ setting is that the space of (oriented)
$2$-spheres is a (pseudo-)Hermitian symmetric space.  Consequently,
one may apply a canonical gauge transformation, arising from the
ambient complex structure, to replace the family of flat connections
with a simpler \emph{untwisted} family with poles of about half the
order of the original family.  Working with the untwisted family, we
find that the B\"acklund transformations specialise precisely to
simple factor dressing in the sense of Terng--Uhlenbeck and,
moreover, can be derived from solutions of a Riccati equation.  In
particular, we simultaneously extend the Darboux transforms of
\cite{BurFerLesPed02} to the constrained Willmore case and identify
them with the B\"acklund transforms with real spectral parameter.
These results generalise a similar analysis of harmonic maps into the
$2$-sphere and the constant mean curvature surfaces of which they are
Gauss maps that is carried out in \cite{BurDorLesQui13}.

The results of the paper are based, in part, on those in the second
author's PhD thesis \cite{Qui09,Quia} and some of them were
announced in \cite{Qui11}.

\subsection*{Acknowledgements}

The first-named author gratefully acknowledges instructive
conversations with David Calderbank, Katrin Leschke and Udo
Hertrich-Jeromin.  The second-named author was supported by
Funda\c{c}\~{a}o para a Ci\^{e}ncia e a Tecnologia, Portugal, with
the PhD scholarship SFRH/BD/6356/2001, and by Funda\c{c}\~{a}o da
Universidade de Lisboa, with the postdoctoral scholarship
CMAF-BPD-01/09.

\section{Constrained Willmore surfaces and perturbed harmonicity}

\subsection{Conformal submanifold geometry}

\subsubsection{Conformal geometry of the sphere} 

We are going to study submanifolds of the conformal $n$-sphere $S^n$
and find a convenient conformally invariant viewpoint on the latter
in Darboux's light-cone model \cite[Chapitre~VI]{Dar87}.  For this,
contemplate the Lorentzian space-time $\R^{n+1,1}$ with inner product
$(\,,\,)$ and light-cone $\mathcal{L}$.  The corresponding quadric
$\P(\mathcal{L})$ is a conformal manifold: each section $\sigma$ of
$\mathcal{L}^{\times}\to\P(\mathcal{L})$ provides a metric
$g_{\sigma}$ via $g_{\sigma}(X,Y)=(d_X\sigma,d_Y\sigma)$ and
$g_{e^{u}\sigma}=e^{2u}g_{\sigma}$.  Fixing unit-timelike
$t_0\in\R^{n+1,1}$ yields a section $\sigma$ with $(\sigma,t_0)\equiv
-1$ and so an isometry
$\sigma:(\P(\mathcal{L}),g_{\sigma})\cong\{v\in
\mathcal{L}:(v,t_0)=-1\}$ which last is isometric (via $v\mapsto
v-t_0$) to the unit sphere in $\langle t_0\rangle^{\perp}$.  Thus
$\P(\mathcal{L})\cong S^n$ \emph{qua} conformal manifolds.

Moreover, it is clear that the orthogonal group $\rO(n+1,1)$ acts
transitively and conformally on $\P(\mathcal{L})$. In this way, it
actually double covers the M\"obius group of global conformal
diffeomorphisms on $S^n$.

For more details, see \cite[Chapter~1]{Her03}.

In the sequel, we will have much to do with $\rO(n+1,1)$-connections
and so forms with values in the Lie algebra $\fo(n+1,1)$.  We shall
make repeated (and silent!) use of the isomorphism
$\wedge^{2}\R^{n+1,1}\cong o(\R^{n+1,1})$ given by $u\wedge
v(w):=(u,w)v-(v,w)u$, for $u,v,w\in\R^{n+1,1}$.

\subsubsection{Conformally immersed surfaces and the central sphere
congruence} 
\label{sec:conf-immers-surf}
A map $f:\Sigma\to\P(W)$ of a manifold to a projective space is the
same as a line subbundle of the trivial bundle
$\underline{W}=\Sigma\times W$: we identify $f$ with the bundle whose
fibre at $x$ is $f(x)$.  In particular, we will usually view a map
$\Lambda:\Sigma\to \P(\mathcal{L})$ as a null line subbundle of
$\underline{\R}^{n+1,1}$.  From this point of view, sections of
$\Lambda$ are simply lifts of $\Lambda$ to maps $\Sigma\to\R^{n+1,1}$.

Given such a $\Lambda$, we define the derived bundle
\begin{equation*}
\Lambda^{(1)}:=\langle\sigma,d\sigma(T\Sigma)\rangle\leq\Lambda^{\perp},
\end{equation*}
for $\sigma$ an arbitrary lift of $\Lambda$. Note that $\Lambda$ is
an immersion if and only if the bundle $\Lambda^{(1)}$ has rank
$\dim\Sigma+1$ and then every lift is also an immersion
$\sigma:\Sigma\to\R^{n+1,1}$.

Henceforth, we will take $\Sigma$ to be a Riemann surface, thus an
oriented, conformal manifold of real dimension $2$ or, equivalently,
a complex manifold of dimension $1$.  We denote the complex structure
of $\Sigma$ by $J^{\Sigma}$.  The type decomposition of $T\Sigma$
now induces a decomposition of $\Lambda^{1}/\Lambda$: set
\begin{equation*}
\Lambda^{1,0}=\Lambda\oplus d\sigma(T^{1,0}\Sigma)\qquad
\Lambda^{0,1}=\Lambda\oplus d\sigma(T^{0,1}\Sigma),
\end{equation*}
for some, hence any, lift $\sigma$.  Then
\begin{align*}
\Lambda^{1,0}+\Lambda^{0,1}&=\Lambda^{(1)}\\
\Lambda^{1,0}\cap\Lambda^{0,1}&=\Lambda\\
\Lambda^{1,0}&=\overline{\Lambda^{0,1}}
\end{align*}
\begin{convention}
Here and below, we do not distinguish notationally between a
subbundle and its complexification.  In fact, we shall usually take
all subbundles to be complex subbundles of
$\unC^{n+2}=(\underline{\R}^{n+1,1})^{\C}$ and view such a subbundle as
real if it is stable under the complex conjugation on $\unC^{n+2}$
with fixed set $\underline{\R}^{n+1,1}$.
\end{convention}

An immersion $\Lambda$ is conformal if and only if each lift is a
conformal immersion $\Sigma\to\R^{n+1,1}$ if and only if
$\Lambda^{1,0}$ (and hence also $\Lambda^{0,1}$) is isotropic with
respect to the (complex bilinear extension of) $(\,,\,)$.

A key tool in conformal submanifold geometry is the \emph{central
sphere congruence} or \emph{conformal Gauss map}
\cite{Bla29,Bry84,Eji88,Ger31,Rig87} $V$ of a conformal immersion
$\Lambda$.  For us, this is the bundle $V\leq\underline{\R}^{n+1,1}$
of $(3,1)$-planes given by
\begin{equation*}
\Lambda^{(1)}\oplus\langle\Delta\sigma\rangle,
\end{equation*}
for $\sigma$ any lift of $\Lambda$ and $\Delta$ the Laplacian of
$g_{\sigma}$.

From $V$, we get an orthogonal decomposition of the trivial bundle
\begin{equation*}
\underline{\R}^{n+1,1}=V\oplus V^{\perp}
\end{equation*}
and a corresponding reduction of the trivial connection:
\begin{equation*}
d=\cD_V+\cN_V,
\end{equation*}
where $V,V^{\perp}$ are $\cD_V$-parallel and $\cN_V$ takes values in
$V\wedge V^{\perp}\subset \fo(\underline{\R}^{n+1,1})$.
Alternatively, viewing $V$ as a map $\Sigma\to\mathrm{Gr}_{(3,1)}(\mathbb
R^{n+1,1})$ to the Grassmannian of $(3,1)$-planes, $\cN_{V}$ may be
canonically identified with $dV$.

We note that since $\Lambda^{(1)}\leq V$, $\cN_V(\Lambda)=0$ and,
similarly, $\Delta\sigma\in\Gamma V$ is equivalent to
$\cN_V^{1,0}(\Lambda^{0,1})=0$.

\subsection{Constrained Willmore surfaces and holomorphic quadric differentials}
\label{sec:constr-willm-surf}

Let $\Lambda$ be a conformal immersion of a compact Riemann surface
$\Sigma$ with central sphere congruence $V$.  The \emph{Willmore
energy} $W(\Lambda)$ of $\Lambda$ is given by
\begin{equation*}
W(\Lambda)=\half\int_{\Sigma}|\II_0|^2,
\end{equation*}
where $\II_0$ is the trace-free second fundamental form of $\Lambda$
(computed against any representative metric on $S^n$ and independent
of that choice).  This is known
\cite{Bla29,Bry84,BurFerLesPed02,BurCal10,Eji88} to coincide with the
harmonic map energy of $V$:
\begin{equation}\label{eq:79}
W(\Lambda)=E(V)=\half\int_{\Sigma}(\cN_V\circ J^{\Sigma}\wedge\cN_V).
\end{equation}

A conformal immersion is \emph{Willmore} if it extremises the Willmore
functional and \emph{constrained Willmore} if it extremises the
Willmore functional with respect to infinitesimally conformal
variations.  It follows at once from \eqref{eq:79} that a surface is
Willmore when its central sphere congruence is harmonic and it is
well known \cite{Bla29,Bry84,Rig87} that the converse holds.  Thus a
surface is Willmore if and only if $d^{\mathcal{D}}*\mathcal{N}=0$.
More generally, we have the following reformulation of a theorem of
Bohle--Peters--Pinkall \cite{BohPetPin08}:
\begin{thm}[{\cite[Section~14]{BurCal10}}] \label{CWtemp}
$\Lambda$ is a constrained Willmore surface if and only if there
exists a real form $q\in\Omega ^{1}(\Lambda\wedge\Lambda ^{(1)})$
with
\begin{subequations}
\label{eq:14}
\begin{equation}\label{eq:holoQ}
d^{\mathcal{D}_V}q=0
\end{equation}
such that
\begin{equation}\label{eq:89}
d^{\mathcal{D}_{V}}*\mathcal{N}_{V}=2[q\wedge *\mathcal{N}_{V}].
\end{equation}
\end{subequations}
Such a form $q$ is said to be a \emph{(Lagrange) multiplier} for
$\Lambda$.  Thus Willmore surfaces are constrained
Willmore surfaces with zero multiplier.
\end{thm}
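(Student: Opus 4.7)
My plan is to derive the stated conditions as the Euler--Lagrange equations of $W(\Lambda)=E(V)$ subject to the constraint that the induced conformal structure on $\Sigma$ be preserved, with $q$ playing the role of a Lagrange multiplier for that constraint and the closedness $d^{\cD_V}q=0$ emerging as the intrinsic counterpart of holomorphy of the multiplier.

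First I would compute the unconstrained first variation. Parametrise conformal variations of $\Lambda$ by sections $X$ of the normal bundle $\Lambda^{\perp}/\Lambda^{(1)}$ and compute the induced infinitesimal variation $\dot V$ of the central sphere congruence. Applying the standard first-variation formula for the harmonic-map energy of $V$, viewed as a map into the Grassmannian of $(3,1)$-planes, yields, up to a boundary term,
\[
\delta E(V) \;=\; -\int_{\Sigma}\langle d^{\cD_V}*\cN_V,\,\dot V\rangle,
\]
so that the unconstrained Euler--Lagrange operator is precisely $d^{\cD_V}*\cN_V\in\Omega^1(V\wedge V^{\perp})$. This already gives the Willmore case $q=0$.

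Next I would identify the conformality constraint. Preserving the conformal structure on $\Sigma$ to first order is the requirement that $\Lambda^{1,0}$ remain isotropic for $(\,,\,)$, which amounts to the vanishing of a symmetric bilinear operator $X\mapsto B(X)$ whose image is the bundle of quadratic differentials on $\Sigma$, suitably weighted. The Lagrange multiplier principle characterises constrained critical points by the existence of a dual section $q$ with
\[
\int_{\Sigma}\langle d^{\cD_V}*\cN_V,\,\dot V\rangle \;=\; \int_{\Sigma}\langle q,\,B(X)\rangle
\]
for all $X$. A direct computation identifies the target of the dual of $B$ as $\Lambda\wedge\Lambda^{(1)}\subset\fo(\underline{\R}^{n+1,1})$ and, after integration by parts, converts the right-hand side into a pairing of $\dot V$ with a bracket of the form $2[q\wedge *\cN_V]$; reality of $q$ follows by restricting to real variations.

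The main obstacle is the last step: showing that well-definedness of the Lagrange multiplier as a section of $\Lambda\wedge\Lambda^{(1)}$ (rather than a mere representative modulo an ambiguity) forces the integrability condition $d^{\cD_V}q=0$, and that this closedness is exactly the intrinsic form of the Bohle--Peters--Pinkall requirement that the multiplier be a \emph{holomorphic} quadratic differential. This relies on an isomorphism between $\Lambda\wedge\Lambda^{(1)}$ and the bundle of quadratic differentials (twisted by an appropriate density bundle) coming from the first fundamental form, under which $d^{\cD_V}$ restricts to $\delbar$. Once this identification is in hand, both directions of the equivalence follow from the variational formula above.
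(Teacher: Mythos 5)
You should note first that the paper does not prove this statement at all: it is quoted verbatim from \cite[Section~14]{BurCal10} as a reformulation of the Bohle--Peters--Pinkall theorem, so the only fair comparison is with the variational proof in those references, which your sketch follows in outline. Against that benchmark there are two genuine gaps. First, you treat $\dot V$ as a free variation of a Grassmannian-valued map and read off $d^{\cD_V}*\cN_V$ as ``the'' Euler--Lagrange operator. But $V$ is the central sphere congruence, hence determined by $\Lambda$ through second-order data, so only very special variations $\dot V$ are induced by variations of $\Lambda$; pairing against these alone gives the vanishing of certain components of $d^{\cD_V}*\cN_V-2[q\wedge *\cN_V]$, not the stated identity of $V\wedge V^{\perp}$-valued $2$-forms. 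The classical equivalence ``Willmore $\Leftrightarrow$ harmonic central sphere congruence'' (and its constrained analogue) needs the structural fact that for a central sphere congruence the components of $d^{\cD_V}*\cN_V$ that cannot be reached by induced variations vanish identically (it is confined to $\Lambda\wedge V^{\perp}$), and likewise that $[q\wedge*\cN_V]$ lands in the same place; without this your argument proves something strictly weaker than \eqref{eq:89}.

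Second, the mechanism you propose for $d^{\cD_V}q=0$ is not the right one and would not work: holomorphy of the multiplier cannot follow from ``well-definedness'' of $q$ as a section of $\Lambda\wedge\Lambda^{(1)}$, since the multiplier is in general not even unique (see Remark~\ref{th:35}: for isothermic surfaces there is a one-parameter family). In the actual argument, holomorphy arises from duality in the constrained variational principle: $q$, viewed as a quadratic differential, must annihilate (by integration) every infinitesimal change of conformal structure induced by a compactly supported variation of $\Lambda$, and after integration by parts the annihilator of that image is exactly the holomorphic quadratic differentials; translating this into $d^{\cD_V}q=0$ is then Lemma~\ref{th:34}, which you may quote. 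Moreover, the Lagrange multiplier principle you invoke is being applied in an infinite-dimensional setting, and the ``only if'' direction requires knowing that the derivative of the conformal-structure map has closed range of finite codimension --- this is where the real analytic content of Bohle--Peters--Pinkall lies, and your proposal passes over it silently. As it stands, then, the plan is a reasonable outline of the known proof, but the two steps above are precisely the nontrivial ones and are missing.
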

When $\Sigma$ is non-compact, we define constrained Willmore surfaces
to be the solutions of \eqref{eq:14}.

The $1$-form $q$ is a bundle-valued avatar of a more familiar
geometric object: a holomorphic quadratic differential on $\Sigma$.
Indeed, given $q\in\Omega^{1}(\Lambda\wedge\Lambda ^{(1)})$, we define a
$2$-tensor $Q$ on $M$ by setting
\begin{equation}\label{eq:Q}
Q(X,Y)\sigma:=q_{X}(d_Y\sigma),
\end{equation}
for $\sigma\in\Gamma\Lambda$. We then have
\begin{lemma}[{\cite[Section~12]{BurCal10}}]\label{th:34}
Let $q\in\Omega^{1}(\Lambda\wedge\Lambda ^{(1)})$ and define $Q$ by
\eqref{eq:Q}.  Then $d^{\cD_{V}}q=0$ if and only if $Q$ is symmetric and
trace-free, thus $Q^{1,1}=0$, and $Q^{2,0}$ is holomorphic.

In this case, $q^{1,0}\in\Omega^{1}(\Lambda\wedge\Lambda^{0,1})$.
\end{lemma}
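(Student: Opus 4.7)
The plan is to work locally in a conformal chart $z$ on $\Sigma$ with a real lift $\sigma\in\Gamma\Lambda$, so that $\sigma_z := d_{\partial}\sigma$ is null, $(\sigma_z,\sigma_{\bar z})=\half e^{2u}$ defines the conformal factor $u$, and $\Lambda^{(1)}=\langle\sigma,\sigma_z,\sigma_{\bar z}\rangle$. Since $\Lambda^{1,0}\cap\Lambda^{0,1}=\Lambda$, the decomposition $\Lambda\wedge\Lambda^{(1)} = (\Lambda\wedge\Lambda^{1,0})\oplus(\Lambda\wedge\Lambda^{0,1})$ is direct, and I write
\[
q^{1,0} = (b\,\sigma\wedge\sigma_z + c\,\sigma\wedge\sigma_{\bar z})\,dz,\qquad q^{0,1}=\overline{q^{1,0}},
\]
with complex functions $b,c$ encoding the $(1,0)$-part of $q$.

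A short calculation using $u\wedge v(w)=(u,w)v-(v,w)u$ together with the nullness and orthogonality relations for $\sigma$ and its derivatives gives $Q(\partial,\partial)=-\half ce^{2u}$ and $Q(\partial,\bar\partial)=-\half be^{2u}$. Hence $Q^{2,0}=-\half ce^{2u}(dz)^2$, and ``$Q$ symmetric and trace-free'' is equivalent to $b=0$, equivalently $q^{1,0}\in\Omega^{1}(\Lambda\wedge\Lambda^{0,1})$; this disposes of the final sentence of the lemma conditionally on the main equivalence.

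For the main equivalence, since $\Sigma$ is a Riemann surface, $d^{\cD_V}q=0$ reduces to the vanishing of $(\cD_V)_\partial q_{\bar\partial}-(\cD_V)_{\bar\partial}q_\partial$. Two geometric facts are key. First, $\sigma_{z\bar z}\in\Gamma V$, from $V=\Lambda^{(1)}\oplus\langle\Delta\sigma\rangle$; hence $(\cD_V)_{\bar\partial}\sigma_z=\sigma_{z\bar z}$. Second, the $V$-projection of $\sigma_{zz}$ has the form $a\sigma + 2(\partial u)\sigma_z$ for some scalar $a$, with no $\sigma_{\bar z}$ or $\sigma_{z\bar z}$ component. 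The latter follows by inverting the Gram matrix of the basis $\sigma,\sigma_z,\sigma_{\bar z},\sigma_{z\bar z}$ of $V$, using in particular $(\sigma_{zz},\sigma_{\bar z})=\partial(\sigma_z,\sigma_{\bar z})=e^{2u}\partial u$.

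Expanding by the Leibniz rule and reading off coefficients on the independent $\fo(V)$-elements $\sigma_z\wedge\sigma_{\bar z}$, $\sigma\wedge\sigma_{z\bar z}$, $\sigma\wedge\sigma_z$, $\sigma\wedge\sigma_{\bar z}$, the $\sigma_z\wedge\sigma_{\bar z}$-coefficient equals $b+\bar b$ and the $\sigma\wedge\sigma_{z\bar z}$-coefficient equals $\bar b-b$, both arising only from the $b$-part of $q$, so their simultaneous vanishing forces $b=0$. The remaining two coefficients then collapse to $\bar\partial(ce^{2u})=0$ and its conjugate, which express exactly $\bar\partial Q^{2,0}=0$. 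The main obstacle is the computation of the $V$-projection of $\sigma_{zz}$, where the conformal factor intervenes, together with the book-keeping of the four $\fo(V)$-components of the resulting $(1,1)$-form.
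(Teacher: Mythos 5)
Your proof is correct, but note that this paper never proves Lemma~1.6 itself: it is quoted from Burstall--Calderbank, where it is obtained inside their conformally invariant (tractor-style) calculus, so your argument is a genuinely different, self-contained route. I checked the computation: with $q^{1,0}=(b\,\sigma\wedge\sigma_z+c\,\sigma\wedge\sigma_{\bar z})\,dz$ one indeed gets $Q(\del,\del)=-\half ce^{2u}$, $Q(\del,\bar\del)=-\half be^{2u}$, $Q(\bar\del,\del)=-\half\bar be^{2u}$, so symmetry plus trace-freeness is exactly $b=0$, i.e.\ $Q^{1,1}=0$ and $q^{1,0}\in\Omega^1(\Lambda\wedge\Lambda^{0,1})$; your two geometric inputs are right ($\sigma_{z\bar z}$ is proportional to $\Delta\sigma$, hence lies in $V$, and the Gram-matrix inversion gives $(\cD_V)_{\del}\sigma_z=a\sigma+2(\del u)\sigma_z$ since $(\sigma_{zz},\sigma)=(\sigma_{zz},\sigma_z)=0$); and expanding $(\cD_V)_{\del}q_{\bar\del}-(\cD_V)_{\bar\del}q_{\del}$ on the independent elements $\sigma_z\wedge\sigma_{\bar z}$, $\sigma\wedge\sigma_{z\bar z}$, $\sigma\wedge\sigma_z$, $\sigma\wedge\sigma_{\bar z}$ yields exactly $b+\bar b$, $\bar b-b$, and (once $b=0$) $e^{-2u}\del(\bar ce^{2u})$, $-e^{-2u}\bar\del(ce^{2u})$, so the vanishing of $d^{\cD_V}q$ is equivalent to $b=0$ together with $\bar\del(ce^{2u})=0$, which is the holomorphicity of $Q^{2,0}=-\half ce^{2u}\,dz^2$; both directions of the equivalence follow since you have an explicit formula for $d^{\cD_V}q$ in terms of $b,c$. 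What the reference's invariant approach buys is independence of choices and a uniform treatment in all weights; what yours buys is elementary transparency. One cosmetic simplification, which is exactly the device the paper uses in its own Lemma~1.8 (the computation of $Q^{0,2}\tau=q^{0,1}(\cD_V^{0,1}\tau)$): normalise the lift so that $(\sigma_z,\sigma_{\bar z})$ is constant, which removes all the $\del u$ terms and reduces the holomorphicity condition to $\bar\del c=0$ directly.
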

It is the holomorphic quadratic differential $Q$ that appears in the
Bohle--Peters--Pinkall version of \eqref{eq:89}.

\begin{rem}\label{th:35}
In general, for a given constrained Willmore surface $\Lambda$, the multiplier
$q$ is unique.  The exception is when $\Lambda$ is in addition
isothermic: if $d^{\mathcal{D}_{V}}*\mathcal{N}_V=2[q_{1}\wedge
*\mathcal{N}_V]=2[q_2\wedge *\cN_{V}]$ with $d^{\cD_{V}}q_i=0$ then
$\eta:=*q_2-*q_1$ solves $d^{\cD_{V}}\eta=[\cN_V\wedge\eta]=0$ or,
equivalently, $d\eta=0$.  This last condition amounts to the
existence of a holomorphic quadratic differential that commutes with
the trace-free second fundamental form, that is, that $\Lambda$ is
isothermic.  In this situation, $q_1+*s\eta$ is a multiplier for
$\Lambda$ for each $s\in\R$.

The isothermic Willmore surfaces in $S^3$ are precisely the
minimal surfaces with respect to a constant curvature metric
\cite{Tho23} while the isothermic constrained Willmore surfaces
include those of constant mean curvature \cite{Ric97}.  See
\cite{BohPetPin08,BurPedPin02} for more details on this topic.
\end{rem}

\subsection{Flat connections and perturbed harmonic maps}
\label{sec:flat-conn-pert}

The key to the integrable systems approach to harmonic maps is the
well-known observation that the harmonic map equations amount to the
flatness of a family of connections
\cite{Poh76,Uhl89,ZakMik78,ZahSab79}.  In particular, this gives a
zero-curvature characterisation of Willmore surfaces but more is
true: the constrained Willmore equations also admit a spectral deformation
\cite{BurPedPin02} and hence a zero-curvature representation:
\begin{thm}[\cite{BurCal10}] \label{CWzerocurv} A conformal immersion
$\Lambda$ with central sphere congruence $V$ is constrained Willmore
if and only if there exists a real form
$q\in\Omega^{1}(\Lambda\wedge\Lambda^{(1)})$ such that
\begin{equation}
\label{eq:90}
d^{\lambda}:=\mathcal{D}_{V}+\lambda\mathcal{N}_{V}^{1,0}+\lambda^{-1}\mathcal{N}_{V}^{0,1}+(\lambda^2-1)q^{1,0}+(\lambda^{-2}-1)q^{0,1}
\end{equation}
is flat for all $\lambda\in\C^{\times}$.
\end{thm}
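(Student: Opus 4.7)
The natural approach is to compute the curvature $R^{\lambda} := (d^{\lambda})^{2}$ as a Laurent polynomial in $\lambda$ and identify the vanishing of its coefficients with the constrained Willmore equations of Theorem~\ref{CWtemp}. As a first sanity check, $d^{1} = d$ is the trivial (hence flat) connection, so $R^{\lambda}$ must vanish at $\lambda = 1$. Setting $a := d^{\lambda} - \mathcal{D}_{V}$, the standard formula gives $R^{\lambda} = F_{\mathcal{D}_{V}} + d^{\mathcal{D}_{V}} a + \tfrac{1}{2}[a \wedge a]$.

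The structure equations for the central sphere congruence provide the main algebraic input. Since $\mathcal{D}_{V}$ preserves the orthogonal decomposition $V \oplus V^{\perp}$ while $\mathcal{N}_{V}$ exchanges its summands, the flatness of $d$ splits into the Gauss equation $F_{\mathcal{D}_{V}} = -\tfrac{1}{2}[\mathcal{N}_{V} \wedge \mathcal{N}_{V}]$ and the Codazzi equation $d^{\mathcal{D}_{V}} \mathcal{N}_{V} = 0$; on the Riemann surface, self-type brackets of $1$-forms vanish, so $[\mathcal{N}_{V} \wedge \mathcal{N}_{V}] = 2[\mathcal{N}_{V}^{1,0} \wedge \mathcal{N}_{V}^{0,1}]$, and Codazzi forces $\Psi := d^{\mathcal{D}_{V}} \mathcal{N}_{V}^{1,0} = -d^{\mathcal{D}_{V}} \mathcal{N}_{V}^{0,1}$. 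The crucial further input is the vanishing $[q^{1,0} \wedge q^{0,1}] = 0$: by Lemma~\ref{th:34} together with reality, $q^{1,0}$ lies in $\Omega^{1}(\Lambda \wedge \Lambda^{0,1})$ and $q^{0,1}$ in $\Omega^{1}(\Lambda \wedge \Lambda^{1,0})$; unpacking $[u \wedge v, x \wedge y] = (u,x) v\wedge y - (u,y) v \wedge x - (v,x) u \wedge y + (v,y) u \wedge x$ with $u, x \in \Lambda$, the nullity of $\Lambda$ and isotropy of $\Lambda^{1,0}$ kill three of the four terms, while the surviving $(v, y) u \wedge x$ lies in $\Lambda \wedge \Lambda = 0$ because $\Lambda$ has rank one.

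Combining the Gauss equation with the $[q^{1,0} \wedge q^{0,1}] = 0$ identity to cancel the would-be $\lambda^{0}$ and $\lambda^{\pm 4}$ terms, and using Codazzi to collect the $\mathcal{N}_{V}$-contributions, the Laurent expansion truncates to
\[
R^{\lambda} = (\lambda - \lambda^{-1})\bigl(\Psi + [q^{1,0} \wedge \mathcal{N}_{V}^{0,1}] - [\mathcal{N}_{V}^{1,0} \wedge q^{0,1}]\bigr) + (\lambda^{2}-1)\, d^{\mathcal{D}_{V}} q^{1,0} + (\lambda^{-2}-1)\, d^{\mathcal{D}_{V}} q^{0,1}.
\]
Linear independence of the Laurent monomials converts $R^{\lambda} \equiv 0$ into three coefficient equations. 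The $\lambda^{\pm 2}$ equations give $d^{\mathcal{D}_{V}} q^{1,0} = 0$ (with $d^{\mathcal{D}_{V}} q^{0,1} = 0$ its conjugate), which plainly implies \eqref{eq:holoQ} by summation and, conversely, is forced by \eqref{eq:holoQ} via the holomorphicity half of Lemma~\ref{th:34}. The $\lambda - \lambda^{-1}$ equation, upon applying $*=i$ on $(1,0)$-forms and $*=-i$ on $(0,1)$-forms, rewrites as $d^{\mathcal{D}_{V}} * \mathcal{N}_{V} = 2[q \wedge * \mathcal{N}_{V}]$, which is \eqref{eq:89}. By Theorem~\ref{CWtemp}, this is the constrained Willmore condition.

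The hard part is the careful book-keeping in the Laurent expansion: tracking the graded-symmetric bracket of Lie-algebra-valued $1$-forms, keeping the type decomposition straight, and ensuring the various cancellations line up as expected. The single decisive identity is $[q^{1,0} \wedge q^{0,1}] = 0$; absent this, the expansion would have non-trivial $\lambda^{\pm 4}$ coefficients and flatness would impose restrictive additional conditions beyond those of Theorem~\ref{CWtemp}.
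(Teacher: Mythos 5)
The paper offers no proof of this theorem: it is imported wholesale from [BurCal10] (and its reformulation, Theorem~\ref{CWzerocurvature}, is likewise stated without proof), so the only comparison available is with the in-paper scaffolding (Theorem~\ref{CWtemp}, Lemma~\ref{th:34}) that you correctly take as your starting point. Your overall strategy --- expand $(d^{\lambda})^{2}$ using the Gauss and Codazzi equations for $d=\cD_V+\cN_V$, observe that the $\lambda$-independent terms cancel, and match the coefficients of $\lambda-\lambda^{-1}$ and $\lambda^{\pm2}-1$ with \eqref{eq:89} and \eqref{eq:holoQ} --- is the expected argument and the expansion you display is correct; in particular your translation of the $(\lambda-\lambda^{-1})$-coefficient into \eqref{eq:89} via $*=i$ on $(1,0)$-forms and $*=-i$ on $(0,1)$-forms checks out with the paper's conventions.

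Two steps need tightening. First, you derive $[q^{1,0}\wedge q^{0,1}]=0$ from Lemma~\ref{th:34}, but the refinement $q^{1,0}\in\Omega^{1}(\Lambda\wedge\Lambda^{0,1})$ used there presupposes $d^{\cD_V}q=0$, which in the direction ``flatness $\Rightarrow$ constrained Willmore'' is exactly what you are trying to prove --- as written this is circular. Happily no refinement is needed: for $u,x\in\Lambda$ and $v,y\in\Lambda^{(1)}\leq\Lambda^{\perp}$ the three pairings $(u,x),(u,y),(v,x)$ vanish and the surviving term $(v,y)\,u\wedge x$ dies because $\Lambda$ has rank one, so $\Lambda\wedge\Lambda^{(1)}$ is an abelian subbundle and the identity holds for \emph{every} $q\in\Omega^{1}(\Lambda\wedge\Lambda^{(1)})$; just delete the appeal to the lemma. (Also, the orders that a nonzero $[q^{1,0}\wedge q^{0,1}]$ would contaminate are $\lambda^{\pm2}$ and $\lambda^{0}$, not $\lambda^{\pm4}$: $\lambda^{\pm4}$ terms would require brackets of two $(1,0)$- or two $(0,1)$-forms, which vanish on a surface for type reasons regardless.) Second, the converse implication \eqref{eq:holoQ} $\Rightarrow$ $d^{\cD_V}q^{1,0}=0=d^{\cD_V}q^{0,1}$ is true but needs more than a gesture at the ``holomorphicity half'' of Lemma~\ref{th:34}: on a Riemann surface both $d^{\cD_V}q^{1,0}$ and $d^{\cD_V}q^{0,1}$ are $(1,1)$-forms, so they cannot be separated by type, and Lemma~\ref{th:34} as stated only characterises the vanishing of their sum. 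A short self-contained repair: once $d^{\cD_V}q=0$ puts $q^{1,0}$ in $\Omega^{1,0}(\Wedge^{2}\Lambda^{0,1})$, and since $\Lambda^{0,1}$ is $\cD_V^{0,1}$-stable (an easy consequence of conformality and $\Lambda^{(1)}\leq V$), the form $d^{\cD_V}q^{1,0}$ takes values in the line bundle $\Wedge^{2}\Lambda^{0,1}$ while its conjugate $d^{\cD_V}q^{0,1}$ takes values in $\Wedge^{2}\Lambda^{1,0}$; these meet only in zero, so the vanishing of their sum forces each to vanish separately. With these two repairs your proof is complete.
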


Let us abstract the main properties enjoyed by $V$ and the family of
connections $d^{\lambda}$.  For this, we take
$V\leq\underline{\R}^{n+1,1}$ to be any bundle of $(3,1)$-planes and
consider the corresponding decomposition of the trivial connection:
$d=\cD_V+\cN_V$.  Let $\rho_V$ denote reflection across $V$.

\begin{defn}\label{th:10}
For $k\in\mathbb{N}$, say that $V$ is \emph{$k$-perturbed harmonic}
if there is a family of flat metric connections $d^{\lambda}$,
$\lambda\in\C^{\times}$, on $\underline{\C}^{n+2}$, with the
following properties:
\begin{subequations}\label{eq:1}
\begin{align}
\label{eq:3}
d^1&=d&&\text{(normalisation)}\\
\label{eq:4}
\rho_V\cdot d^{\lambda}&=d^{-\lambda}&&\text{($\rho_V$-twisted)}\\
\label{eq:5}
d^{1/\bar{\lambda}}&=\overline{d^{\lambda}}&&\text{(reality)}
\end{align}
\end{subequations}
and $\lambda\mapsto (d^{\lambda})^{1,0}$ is holomorphic on $\C$ with
a pole of order $k$ at $\infty$ (whence, thanks to \eqref{eq:5},
$\lambda\mapsto(d^{\lambda})^{0,1}$ is holomorphic on
$\C^{\times}\cup\{\infty\}$ with a pole of order $k$ at $0$). In this
case, we say that $(V,d^{\lambda})$ is a $k$-perturbed harmonic
bundle.\footnote{A little circumspection is required here as we have
seen in Remark~\ref{th:35} that a given bundle $V$ can be
$k$-perturbed harmonic in more than one way.}
\end{defn}
In this situation, we may write
\begin{equation*}
d^{\lambda}=\sum_{|i|\leq k}\lambda^iA_i
\end{equation*}
with $A_0$ a real metric connection preserving $V$ and
$A_i\in\Omega^1(\fo(\underline{\C}^{n+2}))$, $i\neq0$, satisfying
$A_i^{1,0}=0$, for $i<0$; $A_{-i}=\overline{A_i}$;
$\mathrm{Ad}_{\rho_V} A_i=(-1)^iA_i$ and
\begin{equation*}
\mathcal{D}_{V}=\sum_iA_{2i}\qquad \mathcal{N}_{V}=\sum_iA_{2i+1}.
\end{equation*}
It follows that $V$ is $1$-perturbed harmonic if and only if it is
harmonic ($A_0=\mathcal{D}_{V}$ and $A_1=\mathcal{N}_{V}^{1,0}$).

\begin{notation}
For $\lambda\in\C\cup\set{\infty}$, set $\lambda^{*}=1/\bar{\lambda}$
so that the reality condition \eqref{eq:5} reads
\begin{equation*}
d^{\lambda^{*}}=\overline{d^{\lambda}}.
\end{equation*}
\end{notation}

With these notions in hand, we reformulate Theorem~\ref{CWzerocurv}
as follows:
\begin{thm}\label{CWzerocurvature}
A conformal immersion $\Lambda$ with central sphere congruence $V$ is
constrained Willmore if and only if $V$ is $2$-perturbed harmonic
with $d^{\lambda}$ satisfying $A_{2}+A_{-2}\in\Omega
^{1}(\Lambda\wedge\Lambda ^{(1)})$.
\end{thm}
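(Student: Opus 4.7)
The plan is to read Theorem \ref{CWzerocurvature} as a pure dictionary between the explicit flat family of Theorem \ref{CWzerocurv} and the abstract $k=2$ instance of Definition \ref{th:10}. In both directions the work is to identify the Laurent coefficients $A_i$ of $d^{\lambda}$ with the geometric data $\cD_V$, $\cN_V$, $q$, using the type, reality and parity conditions to pin down the identification.

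For the forward direction, I would start with a constrained Willmore $\Lambda$ and its multiplier $q$, feed them into \eqref{eq:90}, and expand in powers of $\lambda$ to read off
\[
A_0 = \cD_V - q,\qquad A_{\pm 1} = \cN_V^{1,0},\,\cN_V^{0,1},\qquad A_{\pm 2} = q^{1,0},\,q^{0,1}.
\]
Each axiom of Definition \ref{th:10} is then immediate: $A_0$ is a real, metric, $V$-preserving connection since $q \in \Omega^1(\Lambda \wedge \Lambda^{(1)}) \subset \Omega^1(V \wedge V) \subset \Omega^1(\fo(V))$; the reality $A_{-i} = \overline{A_i}$ records the reality of $\cN_V$ and $q$; the parity $\mathrm{Ad}_{\rho_V}A_i = (-1)^iA_i$ records that $\cN_V$ lies in $V\wedge V^\perp$ while $q$ lies in $V\wedge V$; holomorphy of types and the pole structure are read off \eqref{eq:90}; normalisation collapses to $\cD_V + \cN_V = d$ at $\lambda = 1$; and $A_2 + A_{-2} = q$ is the required $\Lambda\wedge\Lambda^{(1)}$-valued form.

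For the converse, given $(V, d^{\lambda})$ two-perturbed harmonic with $A_2 + A_{-2} \in \Omega^1(\Lambda \wedge \Lambda^{(1)})$, I would set $q := A_2 + A_{-2}$; this is real because $A_{-2} = \overline{A_2}$. The holomorphy of $(d^{\lambda})^{1,0}$ (pole only at $\infty$) forces $A_i^{0,1} = 0$ for $i > 0$, and dually $A_i^{1,0} = 0$ for $i < 0$; hence $q^{1,0} = A_2$, $q^{0,1} = A_{-2}$, and similarly $A_1 = \cN_V^{1,0}$, $A_{-1} = \cN_V^{0,1}$. The $\mathrm{Ad}_{\rho_V}$-parity then splits $d = d^{1}$ into its $V$-preserving part $A_{-2}+A_0+A_2 = \cD_V$ and its $V$-swapping part $A_{-1}+A_1 = \cN_V$, forcing $A_0 = \cD_V - q$. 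Substituting back reproduces exactly formula \eqref{eq:90}; the flatness built into Definition \ref{th:10} now triggers the backward direction of Theorem \ref{CWzerocurv} to conclude that $\Lambda$ is constrained Willmore with multiplier $q$.

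The only genuinely subtle point is extracting $A_0 = \cD_V - q$ in the converse direction: one must simultaneously use the type, parity and conjugation axioms to separate the even-index coefficients into the $V$-preserving connection piece and the $q$-contribution. Once this is in place, the remainder is bookkeeping on powers of $\lambda$, and no new analytic or geometric input beyond Theorem \ref{CWzerocurv} is required.
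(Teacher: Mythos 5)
Your proposal is correct and is essentially the argument the paper intends: Theorem~\ref{CWzerocurvature} is stated as a direct reformulation of Theorem~\ref{CWzerocurv}, with no separate proof given, precisely because the dictionary $A_0=\cD_V-q$, $A_{\pm1}=\cN_V^{1,0},\cN_V^{0,1}$, $A_{\pm2}=q^{1,0},q^{0,1}$ obtained by matching Laurent coefficients of \eqref{eq:90} against the expansion and parity/reality/type conditions recorded after Definition~\ref{th:10} is exactly the translation you carry out in both directions.
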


\begin{rem}
The condition of $k$-perturbed harmonicity makes sense for maps into
an arbitrary Grassmannian of non-degenerate subspaces of an inner
product space $\R^{p,q}$, or, indeed, an arbitrary pseudo-Riemannian
symmetric space.  Much of what we say below applies in this more
general context.
\end{rem}

\section{Transformations of $k$-perturbed harmonic bundles and
constrained Willmore surfaces}

Our $k$-perturbed harmonic bundles come equipped with
a family of flat connections $d^{\lambda}$.  Such
connections, at least on simply connected open subsets of $\Sigma$,
are gauge equivalent to the trivial connection. These, and other, gauge
transformations can be exploited to produce new $k$-perturbed
harmonic bundles in various ways.  We begin with the simplest of
these transformations.

\subsection{Spectral deformation}
\label{sec:spectral-deformation} Let $(V,d^{\lambda})$ be
$k$-perturbed harmonic.  For $\mu\in S^1$, we have from \eqref{eq:5}
that $d^{\mu}$ is a real, flat, metric connection and so there is, at
least locally, a gauge transformation
$\Phi_{\mu}\in\Gamma(\rO(\underline{\R}^{n+1,1}))$ with
\begin{equation}
\label{eq:24} \Phi_{\mu}\cdot d^{\mu}=d,
\end{equation}
where $\Phi_{\mu}\cdot d^{\mu}=\Phi_{\mu}\circ
d^{\mu}\circ\Phi^{-1}_{\mu}$ is the usual action of gauge
transformations on connections. We note that $\Phi_{\mu}$ is unique
up to left multiplication by a constant element of $\rO(n+1,1)$.

\begin{prop}\label{th:12}
Define $V_{\mu}:=\Phi_{\mu}V$.  Then $V_{\mu}$ is also $k$-perturbed
harmonic with associated flat connections
\begin{equation}\label{eq:25}
d^{\lambda}_{\mu}=\Phi_{\mu}\cdot d^{\lambda\mu}.
\end{equation}
\end{prop}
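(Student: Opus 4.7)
The plan is to verify, one by one, each of the requirements of Definition~\ref{th:10} for the pair $(V_\mu, d^\lambda_\mu)$. Many are immediate from the facts that $\Phi_\mu$ is $\lambda$-independent, real, and orthogonal, so the calculations reduce to transporting the corresponding properties of $(V,d^\lambda)$ through the substitution $\lambda \mapsto \lambda\mu$ and conjugation by $\Phi_\mu$.

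First, flatness and the metric condition for $d^\lambda_\mu$ are automatic: each $d^{\lambda\mu}$ is flat and metric by hypothesis, and gauge transformation by the orthogonal $\Phi_\mu$ preserves both. The normalisation $d^1_\mu = d$ is precisely the defining identity \eqref{eq:24} for $\Phi_\mu$. The $\rho_V$-twist condition \eqref{eq:4} follows because reflection is natural with respect to orthogonal transformations: $\rho_{V_\mu} = \Phi_\mu \rho_V \Phi_\mu^{-1}$, so
\begin{equation*}
\rho_{V_\mu}\cdot d^\lambda_\mu = (\Phi_\mu \rho_V \Phi_\mu^{-1})\cdot(\Phi_\mu\cdot d^{\lambda\mu}) = \Phi_\mu\cdot(\rho_V\cdot d^{\lambda\mu}) = \Phi_\mu\cdot d^{-\lambda\mu} = d^{-\lambda}_\mu,
\end{equation*}
invoking \eqref{eq:4} for $d^\lambda$ in the penultimate step.

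For the reality condition \eqref{eq:5}, the key arithmetic point is that $\mu\in S^1$ gives $\mu^*=\mu$, so $(\lambda\mu)^* = \lambda^*\mu$; combining this with reality of $d^\lambda$ and $\overline{\Phi_\mu}=\Phi_\mu$ yields $d^{\lambda^*}_\mu = \Phi_\mu\cdot d^{\lambda^*\mu} = \Phi_\mu\cdot \overline{d^{\lambda\mu}} = \overline{d^\lambda_\mu}$. For the pole structure, observe that the $\lambda$-dependence of $d^\lambda_\mu$ flows entirely through $d^{\lambda\mu}$ since $\Phi_\mu$ does not depend on $\lambda$, while conjugation by a bundle automorphism commutes with the $(1,0)/(0,1)$ type decomposition of forms on $\Sigma$. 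Hence $(d^\lambda_\mu)^{1,0}$ is obtained from $(d^{\lambda\mu})^{1,0}$ by $\lambda$-independent conjugation and inherits its property of being polynomial in $\lambda$ of degree at most $k$, i.e.\ holomorphic on $\C$ with a pole of order $k$ at $\infty$.

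I do not expect any substantive obstacle: the proposition is really a bookkeeping exercise in how the gauge action interacts with the various structural axioms. The only point demanding care is the interplay between the reality of $\Phi_\mu$, the hypothesis $\mu \in S^1$, and the conjugation in the reality axiom, but these combine cleanly via $(\lambda\mu)^* = \lambda^*\mu$ as above.
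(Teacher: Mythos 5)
Your proof is correct and follows essentially the same route as the paper's: verify each condition of Definition~\ref{th:10} directly, using $d^1_\mu=d$ from \eqref{eq:24}, the intertwining $\rho_{V_\mu}\Phi_\mu=\Phi_\mu\rho_V$ for the twisting, $\mu^*=\mu$ together with $\overline{\Phi_\mu}=\Phi_\mu$ for reality, and the $\lambda$-independence of $\Phi_\mu$ (with $|\mu|=1$) for the pole orders at $0$ and $\infty$. No gaps.
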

\begin{proof}
The connections $d^{\lambda}_{\mu}$ are certainly flat, being gauges
of the flat connections $d^{\lambda\mu}$, so we must show that they
have the properties enumerated in Definition~\ref{th:10}.  First,
\eqref{eq:24} tells us that $d^1_{\mu}=d$ so the family is
normalised.  Secondly, we have
$\rho_{V_{\mu}}\circ\Phi_{\mu}=\Phi_{\mu}\circ\rho_V$ which, along
with \eqref{eq:4}, rapidly yields that $d^{\lambda}_{\mu}$ is
$\rho_{V_{\mu}}$-twisted.  Similarly, $\mu=\mu^{*}$ and
$\overline{\Phi_{\mu}}=\Phi_{\mu}$ which, together with \eqref{eq:5},
gives the reality condition for $d^{\lambda}_{\mu}$.  Finally,
$\lambda\mapsto d^{\lambda\mu}$ has the same poles at $0$ and
$\infty$ as $d^{\lambda}$ whence $\lambda\mapsto d^{\lambda}_{\mu}$
does also.
\end{proof}

We can iterate this construction: for $\nu\in S^1$, find a gauge
transformation $\Phi^{\mu}_{\nu}$ for which $\Phi^{\mu}_{\nu}\cdot
d^{\nu}_{\mu}=d$.  Together with \eqref{eq:25}, this gives
\begin{equation*}
(\Phi_{\nu}^{\mu}\Phi_{\mu})\cdot d^{\mu\nu}=d
\end{equation*}
so that we can take $\Phi_{\mu\nu}=\Phi_{\nu}^{\mu}\Phi_{\mu}$ and
conclude:
\begin{prop}
For $\mu,\nu\in S^1$ and $(V,d^{\lambda})$ $k$-perturbed harmonic,
$V_{\mu\nu}=(V_{\mu})_{\nu}$.
\end{prop}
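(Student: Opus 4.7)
The plan is direct and follows the calculation sketched in the paragraph preceding the statement. It suffices to compare $V_{\mu\nu}=\Phi_{\mu\nu}V$ with $(V_\mu)_\nu=\Phi^\mu_\nu V_\mu=\Phi^\mu_\nu\Phi_\mu V$, so I would aim to show that the composition $\Phi^\mu_\nu\Phi_\mu$ is a permissible choice for the gauge $\Phi_{\mu\nu}$ trivializing $d^{\mu\nu}$.

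To that end, I would substitute the identity $d^\nu_\mu=\Phi_\mu\cdot d^{\nu\mu}$ from Proposition~\ref{th:12} into the defining relation $\Phi^\mu_\nu\cdot d^\nu_\mu=d$. Using that the action of gauge transformations on connections is a left action, this becomes
\begin{equation*}
(\Phi^\mu_\nu\Phi_\mu)\cdot d^{\mu\nu}=d,
\end{equation*}
so $\Phi^\mu_\nu\Phi_\mu$ indeed trivializes $d^{\mu\nu}$ and is therefore an admissible choice of $\Phi_{\mu\nu}$. With this choice,
\begin{equation*}
V_{\mu\nu}=\Phi_{\mu\nu}V=\Phi^\mu_\nu(\Phi_\mu V)=\Phi^\mu_\nu V_\mu=(V_\mu)_\nu,
\end{equation*}
which is the claim.

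The only subtlety worth flagging is the non-uniqueness noted just after \eqref{eq:24}: each trivializing gauge is determined only up to left multiplication by a constant element of $\rO(n+1,1)$, so each $V_\mu=\Phi_\mu V$ inherits the same ambiguity. The content of the proposition is therefore that, having fixed $\Phi_\mu$ and $\Phi^\mu_\nu$, the canonical composition $\Phi^\mu_\nu\Phi_\mu$ is in turn a valid trivializer at spectral parameter $\mu\nu$, so the two iterated constructions agree on the nose. I do not foresee a real obstacle; the result is essentially a compatibility check that the $S^1$-parametrized family of spectral deformations organizes itself into a (local) action of $S^1$ modulo the $\rO(n+1,1)$ ambiguity.
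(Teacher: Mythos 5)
Your argument is exactly the paper's: substitute $d^{\nu}_{\mu}=\Phi_{\mu}\cdot d^{\nu\mu}$ into $\Phi^{\mu}_{\nu}\cdot d^{\nu}_{\mu}=d$ to get $(\Phi^{\mu}_{\nu}\Phi_{\mu})\cdot d^{\mu\nu}=d$, take $\Phi_{\mu\nu}=\Phi^{\mu}_{\nu}\Phi_{\mu}$, and conclude $V_{\mu\nu}=(V_{\mu})_{\nu}$. Your remark on the constant $\rO(n+1,1)$ ambiguity matches the paper's framing of the result as an $S^1$-action on $k$-perturbed harmonic bundles up to congruence, so the proposal is correct and essentially identical to the paper's proof.
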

Thus we have an $S^1$-action on $k$-perturbed harmonic bundles up to
congruence.  The analysis applies for $k$-perturbed harmonic maps
into any symmetric space, \emph{mutatis mutandis}, and, for $k=1$,
recovers the well-known result of Terng (c.f. \cite{Uhl89}).

Expanding \eqref{eq:25} in powers of $\lambda$ gives:
\begin{lemma}
In the situation of Proposition~\ref{th:12}, write $d^{\lambda}=\sum
A_i$ and $d^{\lambda}_{\mu}=\sum A^{\mu}_i$.  Then
\begin{subequations}
\begin{align}
A^{\mu}_0&=\Phi_{\mu}\cdot A_0\\
A^{\mu}_i&=\mu^i\Ad_{\Phi_{\mu}}A_i,\label{eq:26}
\end{align}
\end{subequations}
for $i\neq 0$.
\end{lemma}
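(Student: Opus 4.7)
The proof is essentially a bookkeeping exercise once one has decomposed $d^{\lambda\mu}$ correctly and tracked how the gauge action treats the ``connection part'' versus the ``Lie-algebra-valued 1-form part'' separately.

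My plan is first to substitute $\lambda\mu$ for $\lambda$ in the expansion $d^{\lambda}=\sum_{|i|\leq k}\lambda^{i}A_{i}$, so as to write
\begin{equation*}
d^{\lambda\mu}=A_{0}+\sum_{i\neq0}(\lambda\mu)^{i}A_{i}=A_{0}+\sum_{i\neq0}\lambda^{i}\mu^{i}A_{i},
\end{equation*}
singling out the $i=0$ term because $A_{0}$ is a metric connection while the remaining $A_{i}$ are honest $\fo(\underline{\C}^{n+2})$-valued $1$-forms.

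Next, I would apply the gauge transformation $\Phi_{\mu}$, using $\Phi_{\mu}\cdot d^{\lambda\mu}=\Phi_{\mu}\circ d^{\lambda\mu}\circ \Phi_{\mu}^{-1}$. Conjugation distributes over the decomposition: on the connection summand it gives the gauged connection $\Phi_{\mu}\cdot A_{0}$ by definition, while on each pointwise endomorphism-valued form $A_{i}$ ($i\neq0$) it gives simply $\Phi_{\mu}A_{i}\Phi_{\mu}^{-1}=\Ad_{\Phi_{\mu}}A_{i}$, since no derivative of $\Phi_{\mu}$ enters when one conjugates a tensorial object. Thus
\begin{equation*}
d^{\lambda}_{\mu}=\Phi_{\mu}\cdot d^{\lambda\mu}=\Phi_{\mu}\cdot A_{0}+\sum_{i\neq0}\lambda^{i}\mu^{i}\Ad_{\Phi_{\mu}}A_{i}.
\end{equation*}

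Finally, by the uniqueness of the Laurent expansion in $\lambda$, comparing this identity term by term with $d^{\lambda}_{\mu}=\sum_{|i|\leq k}\lambda^{i}A^{\mu}_{i}$ yields $A^{\mu}_{0}=\Phi_{\mu}\cdot A_{0}$ and $A^{\mu}_{i}=\mu^{i}\Ad_{\Phi_{\mu}}A_{i}$ for $i\neq0$, which is the claim. The only point that requires any circumspection is the distinction between ``gauging a connection'', which produces an extra $-(d\Phi_{\mu})\Phi_{\mu}^{-1}$ term absorbed into $\Phi_{\mu}\cdot A_{0}$, and ``conjugating a tensorial $1$-form'', which is pure adjoint action; getting this right is the main (and only) obstacle, but it is essentially automatic from the definitions.
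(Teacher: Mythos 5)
Your proposal is correct and is exactly the paper's argument: the paper simply says the lemma follows by "expanding \eqref{eq:25} in powers of $\lambda$", and your write-up supplies those details — splitting off the connection term $A_0$, noting that gauging sends it to $\Phi_{\mu}\cdot A_0$ while the tensorial terms $A_i$, $i\neq0$, are merely conjugated, and comparing Laurent coefficients in $\lambda$.
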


We use this to examine the special case where $V$ is the central
sphere congruence of a constrained Willmore surface $\Lambda$ with
multiplier $q$.  In this case, set $\Lambda_{\mu}=\Phi_{\mu}\Lambda$
and define $q_{\mu}$ by
\begin{equation*}
q_{\mu}=A^{\mu}_2+A^{\mu}_{-2}=\Ad_{\Phi_{\mu}}(\mu^{2}q^{1,0}+\mu^{-2}q^{0,1}),
\end{equation*}
where the last equality is an instance of \eqref{eq:26}. We now
recover a result of \cite{BurPedPin02} as formulated in
\cite{BurCal10}:
\begin{prop}[\cite{BurPedPin02}]
\label{th:13} 
For each $\mu\in S^1$, $\Lambda_{\mu}$ is a conformal immersion and
a constrained Willmore surface with multiplier $q_{\mu}$.
\end{prop}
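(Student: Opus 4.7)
The plan is to reduce everything to the zero-curvature characterisation of constrained Willmore surfaces in Theorem~\ref{CWzerocurvature}. Since Proposition~\ref{th:12} already equips $V_\mu=\Phi_\mu V$ with the $2$-perturbed harmonic family $d^\lambda_\mu$, three tasks remain: to show $\Lambda_\mu$ is a conformal immersion, to identify $V_\mu$ as its central sphere congruence, and to verify that $q_\mu=A^\mu_2+A^\mu_{-2}$ lies in $\Omega^1(\Lambda_\mu\wedge\Lambda_\mu^{(1)})$.

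The workhorse is the identification $\Lambda_\mu^{(1)}=\Phi_\mu\Lambda^{(1)}$ together with its type refinements $\Lambda_\mu^{1,0}=\Phi_\mu\Lambda^{1,0}$ and $\Lambda_\mu^{0,1}=\Phi_\mu\Lambda^{0,1}$. Starting from $\Phi_\mu\cdot d^\mu=d$, I would apply both sides to a lift $\sigma$ of $\Lambda$ to get $d(\Phi_\mu\sigma)=\Phi_\mu(d^\mu\sigma)$. The difference $d^\mu\sigma-d\sigma=\sum_{i\neq 0}(\mu^i-1)A_i\sigma$ vanishes on $\Gamma\Lambda$: one has $A_{\pm 1}\sigma=\mathcal{N}_V\sigma=0$ because $\Lambda\subset V$, while $A_{\pm 2}\sigma=0$ since the values of $A_2=q^{1,0}$ and $A_{-2}=q^{0,1}$ lie in $\Lambda\wedge\Lambda^{(1)}$, and both of these factors are orthogonal to $\Lambda$ (using the nullity of $\Lambda$ and $\Lambda^{(1)}\subset\Lambda^\perp$). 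Thus $d(\Phi_\mu\sigma)=\Phi_\mu d\sigma$, and the three identifications of flag bundles follow by restriction to $T\Sigma$, $T^{1,0}\Sigma$ and $T^{0,1}\Sigma$.

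With these in hand, the conformality of $\Lambda_\mu$ is immediate from the orthogonality of $\Phi_\mu$, which transports the isotropy of $\Lambda^{1,0}$ to $\Lambda_\mu^{1,0}=\Phi_\mu\Lambda^{1,0}$, while the rank of $\Lambda_\mu^{(1)}$ simultaneously certifies that $\Lambda_\mu$ is an immersion. For the central sphere congruence claim, I would invoke the characterisation recalled in Section~\ref{sec:conf-immers-surf}: among $(3,1)$-plane bundles containing $\Lambda_\mu^{(1)}$, the central sphere congruence is distinguished by $\mathcal{N}^{1,0}(\Lambda_\mu^{0,1})=0$. The inclusion $\Lambda_\mu^{(1)}\subset V_\mu$ is clear, and the parity and holomorphicity built into $d^\lambda_\mu$ identify $\mathcal{N}_{V_\mu}^{1,0}$ with $A^\mu_1=\mu\Ad_{\Phi_\mu}\mathcal{N}_V^{1,0}$ via \eqref{eq:26}, whence
\begin{equation*}
\mathcal{N}_{V_\mu}^{1,0}(\Lambda_\mu^{0,1})=\mu\,\Phi_\mu\bigl(\mathcal{N}_V^{1,0}(\Lambda^{0,1})\bigr)=0.
\end{equation*}

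The multiplier condition is then formal: $q^{1,0}$ and $q^{0,1}$ lie in $\Omega^1(\Lambda\wedge\Lambda^{(1)})$, so $\mu^2 q^{1,0}+\mu^{-2}q^{0,1}$ does too, and applying $\Ad_{\Phi_\mu}$ sends it into $\Omega^1(\Phi_\mu\Lambda\wedge\Phi_\mu\Lambda^{(1)})=\Omega^1(\Lambda_\mu\wedge\Lambda_\mu^{(1)})$; reality for $\mu\in S^1$ follows from $\bar\mu=\mu^{-1}$ together with the reality of $q$. Theorem~\ref{CWzerocurvature} then delivers the conclusion that $\Lambda_\mu$ is constrained Willmore with multiplier $q_\mu$. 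I expect the main obstacle to be the identification of the flag bundles in the second paragraph; once it is in place, the conformality, central sphere congruence, and multiplier assertions are essentially book-keeping.
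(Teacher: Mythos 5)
Your proposal is correct and follows essentially the same route as the paper: both reduce the claim to Theorem~\ref{CWzerocurvature} via the key identification $\Lambda_{\mu}^{1,0}=\Phi_{\mu}\Lambda^{1,0}$ (and its conjugate), obtained by observing that $q\Lambda=\cN_V\Lambda=0$ so that $d^{\mu}$ and $d$ agree on $\Gamma\Lambda$, after which conformality, the central sphere congruence condition $\cN_{V_{\mu}}^{1,0}\Lambda_{\mu}^{0,1}=0$ and the placement of $q_{\mu}$ follow from \eqref{eq:26}. You merely spell out a few details (why $q\Lambda=0$, reality of $q_{\mu}$) that the paper leaves implicit.
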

\begin{proof}
To see that $\Lambda_{\mu}$ is a conformal immersion, we need
$\Lambda_{\mu}^{1,0}$ to be maximal isotropic in $V_{\mu}$ . For
$V_{\mu}$ to be the central sphere congruence of $\Lambda_{\mu}$, we
need $\cN_{V_{\mu}}^{1,0}\Lambda_{\mu}^{0,1}=0$ and, finally, we need
$q_{\mu}^{1,0}$ to take values in $\Wedge^2\Lambda_{\mu}^{0,1}$.
Then Theorem \ref{CWzerocurvature} yields the result.  However, since
\eqref{eq:26} gives
$\cN^{1,0}_{V_{\mu}}=\mu\Ad_{\Phi_{\mu}}\cN_V^{1,0}$, all of this
follows as soon as we know that
$\Lambda^{1,0}_{\mu}=\Phi_{\mu}\Lambda^{1,0}$.  For this last,
observe that $q\Lambda=\cN\Lambda=0$ so that the operators $d^{\mu}$
and $d$ coincide on $\Gamma\Lambda$.  We conclude that
$\Lambda^{1,0}=\Lambda\oplus d^{\mu}\sigma(T^{1,0}\Sigma)$, for any
$\sigma\in\Gamma\Lambda$ and, gauging by $\Phi_{\mu}$, we see that
$\Lambda_{\mu}^{1,0}=\Phi_{\mu}\Lambda^{1,0}$ as required.
\end{proof}

\subsection{Dressing action}\label{dressingaction}

We are going to use a version of the Terng--Uhlenbeck dressing
action \cite{TerUhl00} to construct new constrained harmonic bundles
from $V$.  The key idea here is to find $\lambda$-dependent gauge
transformations that preserve the algebraic shape of $d^{\lambda}$
(see \cite{BurCal,BurDonPedPin11} for similar viewpoints on
dressing).  These transformations, when applied to the central
sphere congruence of a constrained Willmore surface, will give rise
to new such surfaces.

We begin by specifying the properties of this gauge transformation.
\begin{defn}\label{th:9}
Let $(V,d^{\lambda})$ be a $k$-perturbed harmonic bundle.  A
\emph{dressing gauge for $(V,d^{\lambda})$} is a family of gauge
transformations $\lambda\mapsto r(\lambda)\in\Gamma\rO(\unC^{n+2})$
which is holomorphic in $\lambda$ near $0,\infty\in\P^{1}$ and has
the following properties:
\begin{enumerate}
\item \label{item:1} $r(-\lambda)\circ\rho_V\circ r(\lambda)^{-1}$ is
independent of $\lambda\in\dom(r)$.
\item \label{item:2} For all $\lambda\in\dom(r)$,
\begin{equation}
\label{eq:8}
r(\lambda^{*})=\overline{r(\lambda)}\qquad\text{(reality)}.
\end{equation}
(Thus we require that $\dom(r)$ is stable under
$\lambda\mapsto\pm\lambda^{*}$.)
\item \label{item:3} The connections $r(\lambda)\cdot d^{\lambda}$
extend from $\dom(r)\setminus\set{0,\infty}$ to a holomorphic family
of connections $\hd^{\lambda}$ on $\C^{\times}$ with $\hd^1=d$.
\item \label{item:4}
\begin{equation}
\label{eq:31} \det(r(0)^{-1}r(\infty)_{|V})=1.
\end{equation}
\end{enumerate}
For such an $r$, set $\hV=r(0)V$ and call $(\hV,\hd^{\lambda})$ the
\emph{dressing transform of $(V,d^{\lambda})$ by $r$}.
\end{defn}
Note that item (\ref{item:1}) now reads
\begin{equation}
\label{eq:7} r(-\lambda)\circ\rho_V=\rho_{\hV}\circ
r(\lambda),\qquad\text{(twisted)}
\end{equation}
for $\lambda\in\dom(r)$, and then evaluating at $\lambda=\infty$
yields $\hV=r(\infty)V$ also.  In particular,
$r(0)^{-1}r(\infty)V=V$ so that \eqref{eq:31} makes sense.  

It is our contention that $(\hV,\hd^{\lambda})$ is again a
$k$-perturbed harmonic map $\Sigma\to\mathrm{Gr}_{3,1}(\R^{n+1,1})$.
We begin by showing that $\hV$ is indeed (the complexification of) a
bundle of real $(3,1)$-planes.  First, evaluate \eqref{eq:8} at
$\lambda=0$, to get $r(\infty)=\overline{r(0)}$ so that
$\overline{\hV}=r(\infty)V=\hV$.  We now have:
\begin{lemma}\label{th:22}
Let $W\leq\C^{n+2}$ be a real (thus $W=\overline{W}$), non-degenerate
$4$-plane and $T\in\rO(\C^{n+2})$.  Then
\begin{enumerate}
\item \label{item:9}$W\cap\R^{n+1,1}$ is a $(3,1)$-plane if and only if $\dim
(U\cap\overline{U})=1$, for any maximal isotropic $2$-plane $U\leq W$.
\item \label{item:8}If $W\cap\R^{n+1,1}$ is a $(3,1)$-plane and
$TW=\overline{TW}$ then $TW\cap\R^{n+1,1}$ is also a $(3,1)$-plane if and
only if $\det(T^{-1}\overline{T})=1$.
\end{enumerate}
\end{lemma}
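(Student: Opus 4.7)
My plan is as follows. Since $W$ is real and non-degenerate of complex dimension $4$, and the ambient real form has signature $(n+1,1)$ with only one negative direction, $W \cap \R^{n+1,1}$ is a real non-degenerate $4$-plane whose signature is either $(4,0)$ or $(3,1)$. For part (1), I would pick a maximal isotropic $U \leq W$ and note that $U \cap \overline U$ is stable under conjugation, hence is the complexification of the real isotropic subspace $(U \cap \overline U) \cap \R^{n+1,1}$ of $W \cap \R^{n+1,1}$. In signature $(4,0)$ there are no nonzero null real vectors, so $\dim(U \cap \overline U) = 0$ at once. In signature $(3,1)$ the maximal real isotropic dimension is $1$, whence $\dim(U \cap \overline U) \leq 1$, and it remains to rule out the case where this dimension is $0$.

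The hard part will be this lower bound. I would argue by contradiction: if $U \cap \overline U = 0$ then $W = U \oplus \overline U$ by dimension count, and this direct sum decomposition promotes to a complex structure $J$ on the real form $W \cap \R^{n+1,1}$ whose complexified $+i$-eigenspace is $U$. Isotropy of $U$ translates precisely into $J$ being both a $g$-isometry and $g$-skew, so $h(X,Y) := g(X,Y) + i\, g(X, JY)$ is a well-defined non-degenerate Hermitian form on $(W \cap \R^{n+1,1}, J)$ of complex rank $2$. Its signature $(p,q)$ with $p + q = 2$ forces the real signature of $g$ on $W \cap \R^{n+1,1}$ to be $(2p, 2q) \in \{(4,0),(2,2),(0,4)\}$, none of which equals $(3,1)$; contradiction.

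For part (2), I would reduce to part (1) applied to $TW$, which is real by hypothesis and non-degenerate since $T$ preserves the form. Fixing a maximal isotropic $U \leq W$, the image $TU$ is maximal isotropic in $TW$, and from $\overline{TU} = \overline T\,\overline U$ I would get $\dim(TU \cap \overline{TU}) = \dim(U \cap S\,\overline U)$ where $S := T^{-1}\overline T|_W$. The hypothesis $TW = \overline{TW}$ amounts to $S$ being well-defined as a map $W \to W$; form-preservation is inherited from $T$ and $\overline T$, so $S \in \rO(W)$ with $\det S = \pm 1$. I would then invoke the standard description of the variety of maximal isotropic $2$-planes in a non-degenerate complex orthogonal space of dimension $4$: it has two connected rulings, with two distinct such planes lying in the same ruling if and only if they meet in dimension $0$, and in opposite rulings if and only if they meet in dimension $1$; $\rSO(W)$ preserves rulings, and $\rO(W) \setminus \rSO(W)$ swaps them. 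Part (1) applied to $W$ places $U$ and $\overline U$ in opposite rulings (they meet in dimension $1$), so $U$ and $S\overline U$ lie in opposite rulings---equivalently, $\dim(U \cap S\overline U) = 1$---if and only if $S$ preserves rulings, if and only if $\det S = 1$. Part (1) applied to $TW$ then yields the stated equivalence, provided one reads $\det(T^{-1}\overline T)$ as $\det((T^{-1}\overline T)|_W)$; this is the only nontrivial interpretation, since on all of $\C^{n+2}$ the determinant equals $1$ automatically for $T \in \rO$.
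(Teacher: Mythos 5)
Your proposal is correct and follows essentially the same route as the paper: part (1) via real null vectors in $W\cap\R^{n+1,1}$ plus the orthogonal-complex-structure obstruction (you merely spell out the Hermitian-form signature argument the paper leaves implicit), and part (2) by transporting to $U\cap T^{-1}\overline{T}\,\overline U$ and using the two rulings of the quadric of isotropic $2$-planes in $W$, preserved by $\rSO(W)$ and swapped by $\rO(W)\setminus\rSO(W)$. Your reading of $\det(T^{-1}\overline T)$ as the determinant of the restriction to $W$ is also the intended one, consistent with how the paper invokes the lemma via the normalisation $\det(r(0)^{-1}r(\infty)_{|V})=1$.
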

\begin{proof}
For (\ref{item:9}), note that $W_{\R}:=W\cap\R^{n+1,1}$ has signature
$(4,0)$ or $(3,1)$.  Since $U\cap\overline{U}\cap\R^{n+1,1}$ is a
real isotropic subspace of $W_{\R}$, we have
$\dim(U\cap\overline{U})\leq 1$ with equality forcing $W_{\R}$ to
have signature $(3,1)$.  For the converse, if
$\dim(U\cap\overline{U})=0$, then $U$ and $\overline{U}$ are the
$\pm\sqrt{-1}$-eigenspaces of an orthogonal complex structure on
$W_{\R}$ and this requires that $W_{\R}$ be a $(4,0)$-plane.

For (\ref{item:8}), let $U\leq W$ be maximal isotropic so that $TU$
is maximal isotropic in $TW$.  We have just seen that $(TW)_{\R}$ is a
$(3,1)$-plane if and only if $\dim(TU\cap \overline{TU})=1$, or,
equivalently, $\dim U\cap (T^{-1}\overline{T})\overline{U}=1$.  Now
$U$ and $\overline{U}$ define lines in the quadric in
$\P(W)$ given by the ambient inner product and such lines intersect
exactly when they lie in different rulings of that quadric.  These
rulings comprise different $\rSO(W)$-orbits which are permuted by
$\rO(W)\setminus\rSO(W)$.  Thus $\det(T^{-1}\overline{T})=1$ if and
only if $\overline{U}$ and $(T^{-1}\overline{T})\overline{U}$ lie in
the same ruling if and only if $(T^{-1}\overline{T})\overline{U}$
intersects $U$ in a single (projective) point.
\end{proof}

Applying this last fibrewise to $V$ with $T=r(0)$ and using
\eqref{eq:31} immediately yields:
\begin{corol}
\label{th:4} Let $(\hV,\hd^{\lambda})$ be the dressing transform of
$(V,d^{\lambda})$ by $r$.  Then,
\begin{enumerate}
\item \label{item:5} $\overline{\hV}=\hV$.
\item \label{item:6} $\hV\cap\underline{\R}^{n+1,1}$ is a bundle of $(3,1)$-planes.
\item \label{item:7} For any maximal isotropic subbundle of $U$ of $\hV$,
$U\cap\overline{U}$ has rank $1$.
\end{enumerate}
\end{corol}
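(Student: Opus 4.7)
The plan is to derive all three assertions from a pointwise application of Lemma~\ref{th:22} with $W = V_x$ and $T = r(0)_x$, using the identity $r(\infty) = \overline{r(0)}$ that falls out of the reality condition~\eqref{eq:8} by setting $\lambda = 0$ (so that $\lambda^{*}=\infty$).

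For (\ref{item:5}), since $V$ is a real bundle and $r(\infty) = \overline{r(0)}$ as orthogonal automorphisms of $\unC^{n+2}$, I would write
\[
\overline{\hV} \;=\; \overline{r(0)V} \;=\; r(\infty)V \;=\; \hV,
\]
the last equality having already been established, via the twistedness condition~\eqref{eq:7} evaluated at $\lambda = \infty$, in the paragraph immediately following Definition~\ref{th:9}.

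For (\ref{item:6}), I would invoke part~(\ref{item:8}) of Lemma~\ref{th:22} fibrewise: at each $x \in \Sigma$, $W = V_x$ is a real, non-degenerate $4$-plane (of signature $(3,1)$) and $T = r(0)_x \in \rO(\unC^{n+2})$. The hypothesis $TW = \overline{TW}$ is precisely assertion~(\ref{item:5}) just proved, and the determinant condition $\det(T^{-1}\overline{T}|_{V_x}) = \det(r(0)^{-1}r(\infty)|_{V_x}) = 1$ is exactly~\eqref{eq:31}. The lemma then gives that $\hV_x \cap \R^{n+1,1}$ is a $(3,1)$-plane at every point, so $\hV \cap \underline{\R}^{n+1,1}$ is a bundle of $(3,1)$-planes as required.

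Assertion~(\ref{item:7}) is then immediate from part~(\ref{item:9}) of Lemma~\ref{th:22}, now applied with $W = \hV_x$: since the corresponding real part has just been shown to be a $(3,1)$-plane, any maximal isotropic $2$-plane $U_x \leq \hV_x$ satisfies $\dim(U_x \cap \overline{U_x}) = 1$, which is the fibrewise content of the claim for subbundles. I do not anticipate any serious obstacle here; once the equality $r(\infty) = \overline{r(0)}$ is observed, every hypothesis of Lemma~\ref{th:22} is built into the definition of a dressing gauge, and the corollary is simply the fibrewise repackaging of that lemma.
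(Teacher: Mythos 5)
Your proposal is correct and follows essentially the same route as the paper: reality at $\lambda=0$ gives $r(\infty)=\overline{r(0)}$, twistedness at $\lambda=\infty$ gives $\hV=r(\infty)V$, and then Lemma~\ref{th:22} is applied fibrewise with $T=r(0)$ together with the determinant normalisation \eqref{eq:31}. Nothing is missing.
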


With these preliminaries dealt with, we have:
\begin{thm}\label{th:1}
Let $(\hV,\hd^{\lambda})$ be the dressing transform of
$(V,d^{\lambda})$ by $r$.  Then $(\hV,\hd^{\lambda})$ is
$k$-perturbed harmonic.
\end{thm}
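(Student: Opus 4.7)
The plan is to verify each item of Definition~\ref{th:10} for $(\hV,\hd^{\lambda})$. Flatness, metricity, and normalisation are essentially free: flatness follows from gauge-equivalence with the flat $d^{\lambda}$ on $\dom(r)\setminus\set{0,\infty}$ and the fact that flatness is a closed, holomorphic-in-$\lambda$ condition so propagates to the extension $\hd^{\lambda}$ on $\C^{\times}$; each $\hd^{\lambda}$ is metric because $r(\lambda)\in\rO(\unC^{n+2})$; and $\hd^{1}=d$ is part~(\ref{item:3}) of Definition~\ref{th:9}. It remains to establish the twisted condition, the reality condition, and the required pole structure.

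For the twisted condition, evaluate the $\lambda$-independent quantity $r(-\lambda)\circ\rho_{V}\circ r(\lambda)^{-1}$ at $\lambda=0$ and use $\hV=r(0)V$ to identify it with $\rho_{\hV}$. Rearranging gives $\rho_{\hV}\circ r(\lambda)=r(-\lambda)\circ\rho_{V}$, and hence, on $\dom(r)\setminus\set{0,\infty}$,
\begin{equation*}
\rho_{\hV}\cdot\hd^{\lambda}
=\bigl(\rho_{\hV}\circ r(\lambda)\bigr)\cdot d^{\lambda}
=\bigl(r(-\lambda)\circ\rho_{V}\bigr)\cdot d^{\lambda}
=r(-\lambda)\cdot d^{-\lambda}
=\hd^{-\lambda},
\end{equation*}
where the third equality uses \eqref{eq:4}. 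Analytic continuation then extends this identity to all of $\C^{\times}$. The reality condition for $\hd^{\lambda}$ follows by an entirely analogous manipulation from the reality \eqref{eq:8} of $r$ and the reality \eqref{eq:5} of $d^{\lambda}$.

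The main technical point is the pole structure. Writing $d^{\lambda}=d+A^{\lambda}$ and $\hd^{\lambda}=d+\hA^{\lambda}$, the standard gauge formula gives
\begin{equation*}
\hA^{\lambda}=\Ad_{r(\lambda)}A^{\lambda}-dr(\lambda)\cdot r(\lambda)^{-1},
\end{equation*}
and this decomposes under type. On a neighbourhood of $\infty$ on which $r(\lambda)$ is holomorphic and invertible, $\del r(\lambda)\cdot r(\lambda)^{-1}$ is holomorphic in $\lambda$ and $\Ad_{r(\lambda)}$ is a holomorphic invertible operator; hence the pole order of $\hA^{\lambda,1,0}$ at $\infty$ matches that of $A^{\lambda,1,0}$, which is at most $k$. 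A symmetric argument near $\lambda=0$ shows that $\hA^{\lambda,1,0}$ is holomorphic at $0$. Combined with the holomorphy of $\hd^{\lambda}$ on $\C^{\times}$ furnished by hypothesis~(\ref{item:3}), this gives the required statement for $(\hd^{\lambda})^{1,0}$; the corresponding statement for $(\hd^{\lambda})^{0,1}$ then follows from the reality of $\hd^{\lambda}$ just established.

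The principal obstacle is really bookkeeping: ensuring that the various identities which hold \emph{a priori} only on the punctured domain $\dom(r)\setminus\set{0,\infty}$ propagate via the identity principle to the global extension on $\C^{\times}$, and that the pole orders are preserved through the gauge action, which ultimately rests on the holomorphy and invertibility of $r(\lambda)$ at $\lambda=0,\infty$. No genuinely new ideas are needed beyond careful use of the dressing-gauge axioms of Definition~\ref{th:9}.
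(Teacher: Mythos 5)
Your proposal is correct and follows essentially the same route as the paper: establish normalisation by hypothesis, verify the twisted, reality and flatness conditions on $\dom(r)\setminus\set{0,\infty}$ using \eqref{eq:7}, \eqref{eq:8} and the corresponding properties of $d^{\lambda}$, extend them to all of $\C^{\times}$ by unique continuation, and deduce the pole behaviour at $0,\infty$ from the holomorphy (and invertibility) of $r(\lambda)$ there. Your explicit gauge formula for the $(1,0)$-part merely spells out what the paper states in one line, so there is no substantive difference.
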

\begin{proof}
This amounts to showing that the connections $\hat{d}^{\lambda}$
satisfy the conditions of Definition~\ref{th:10}.  By hypothesis,
$\hat{d}^{\lambda}$ is normalised.  For the remaining conditions that
are pointwise in $\lambda$, we establish them first for
$\lambda\in\dom(r)\setminus\set{0,\infty}$ and then conclude they
hold for all $\lambda\in\C^{\times}$ by unique continuation. Indeed,
\eqref{eq:7} together with the fact that $d^{\lambda}$ is
$\rho_V$-twisted combine to show that $\hat{d}^{\lambda}$ is
$\rho_{\hat{V}}$-twisted on $\dom(r)\setminus\set{0,\infty}$.
Similarly, the reality condition on $\hat{d}^{\lambda}$ follows
immediately from that on $d^{\lambda}$ and \eqref{eq:8}. Moreover,
for $\lambda\in\dom(r)\setminus\set{0,\infty}$, $\hat{d}^{\lambda}$
is flat since it is a gauge of the flat connection $d^{\lambda}$.

Finally, the pole behaviour of $\hat{d}^{\lambda}$ at zero and
infinity coincides with that of $d^{\lambda}$ since the connections
differ by the gauge transformations $r(\lambda)$ which are
holomorphic in $\lambda$ near those points.
\end{proof}

To go further, we need to establish some relations between the
coefficients $\hat{A}_i$ of $\hat{d}^{\lambda}$ and those of
$d^{\lambda}$.  First let us establish some notation: write 
\begin{align*}
A_{+}&=\sum_{0<i\leq k}\lambda^iA_i,& A_{-}&=\sum_{k\leq i<0}\lambda^iA_i
\end{align*}
so that $d^{\lambda}=A_{+}+A_0+A_{-}$ and, similarly,
$\hat{d}^{\lambda}=\hat{A}_{+}+\hat{A}_0+\hat{A}_{-}$.  Moreover,
define $\chi_0,\chi_{\infty}$ by
\begin{align*}
\chi_0&=r^{-1}\del r/\del\lambda,\\
\chi_\infty&=r^{-1}\del r/\del\mu,
\end{align*}
where $\mu=1/\lambda$. We note that
$\chi_{\infty}(\lambda)=\overline{\chi_0(\lambda^{*})}$, for
$\lambda\in\C^{\times}$.  On $\dom(r)\setminus\set{0,\infty}$, we
have
\begin{equation}
\label{eq:86}
\hd^{\lambda}=r(\lambda)\cdot d^{\lambda}.
\end{equation}
The $(1,0)$-part of this is holomorphic at $\lambda=0$ and
differentiating with respect to $\lambda$ yields
\begin{equation}
\label{eq:87}
\del \hA_{+}/\del\lambda=
\Ad_{r}(\del A_{+}/\del\lambda-A_0^{1,0}\chi_0-[A_{+},\chi_{0}]).
\end{equation}
Again, we may write the $(0,1)$-part of \eqref{eq:86} as
\begin{equation}
\label{eq:88}
\hat{A}_0^{0,1}+\hat{A}_{-}=r\cdot A^{0,1}_0+\Ad_rA_{-},
\end{equation}
from which we conclude that $\hat{A}_{-}-\Ad_rA_{-}$ is holomorphic
near $\lambda=0$.

Taking coefficients of powers of $\lambda$ now yields:
\begin{lemma}\label{th:2}
In the situation of Theorem~\ref{th:1}, with
$d^{\lambda}=\sum_{|i|\leq k}\lambda^iA_i$ and
$\hd^{\lambda}=\sum_{|i|\leq k}\lambda^i\hA_i$, we have:
\begin{subequations}\label{eq:15}
\begin{align}
\label{eq:10} \hA^{1,0}_0&=r(0)\cdot A^{1,0}_0&
\hA^{0,1}_0&=r(\infty)\cdot A^{0,1}_0\\
\label{eq:2} \hA_1&=\Ad_{r(0)}(A_1-A_0^{1,0}\chi_0(0))&
\hA_{-1}&=\Ad_{r(\infty)}(A_{-1}-A_0^{0,1}\chi_{\infty}(\infty))\\
\label{eq:6} \hA_{-k}&=\Ad_{r(0)}A_{-k}&
\hA_k&=\Ad_{r(\infty)}A_k\\
\intertext{and, for $k>1$,} \label{eq:9}
\hA_{-k+1}&=\Ad_{r(0)}(A_{-k+1}+[\chi_0(0),A_{-k}])&
\hA_{k-1}&=\Ad_{r(\infty)}(A_{k-1}+[\chi_{\infty}(\infty),A_k]).
\end{align}
Finally,
\begin{align}\label{eq:12}
\hA_{0}^{1,0}&=r(\infty)\cdot A_{0}^{1,0}+
\frac{1}{k!}\frac{\del^{k}}{\del\mu^k}_{|\mu=0}\Ad_{r}(A_k+\dots+\mu^{k-1}
A_{1})\\\notag 
\hA_{0}^{0,1}&=r(0)\cdot A_{0}^{0,1}+
\frac{1}{k!}\frac{\del^{k}}{\del\lambda^{k}}_{|\lambda=0}\Ad_{r}(A_{-k}+\dots+\lambda^{k-1}
A_{-1}).
\end{align}
\end{subequations}
\end{lemma}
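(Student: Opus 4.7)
The whole lemma is extracted from the type-split gauge identity
\begin{align*}
\hA_0^{1,0} + \hA_+ &= r\cdot A_0^{1,0} + \Ad_r A_+, \\
\hA_0^{0,1} + \hA_- &= r\cdot A_0^{0,1} + \Ad_r A_-,
\end{align*}
where the first is already holomorphic at $\lambda = 0$ and the second at $\mu = 1/\lambda = 0$ thanks to the polynomial structure of $A_\pm$. The plan is just to read off Taylor coefficients in $\lambda$ (resp.\ $\mu$) and match. The authors have already recorded \eqref{eq:87}, the $\lambda$-derivative of the first line, which is the only fact beyond evaluation and the Leibniz rule that one needs.

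Concretely: evaluating the first line at $\lambda = 0$ gives \eqref{eq:10}; matching the $\lambda^{-k}$-coefficient of the second gives \eqref{eq:6}; evaluating \eqref{eq:87} at $\lambda = 0$ gives \eqref{eq:2}; and matching the coefficient of $\lambda$ in $\Ad_r(\lambda^kA_-) = \Ad_r(A_{-k} + \lambda A_{-k+1} + \cdots)$ for $k>1$, using $\del_\lambda \Ad_r = \Ad_r\circ\ad(\chi_0)$, produces \eqref{eq:9}. The right-column identities come either from the mirror expansion near $\mu = 0$ or from the reality condition \eqref{eq:8}.

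The only identity demanding more than a single evaluation is \eqref{eq:12}. To pick off $\hA_0^{1,0}$ I multiply the first type-split equation by $\mu^k$ to obtain
\begin{equation*}
\mu^k\hA_0^{1,0} + \hA_k + \mu\hA_{k-1} + \cdots + \mu^{k-1}\hA_1 = \mu^k\,r\cdot A_0^{1,0} + \Ad_r\bigl(A_k + \mu A_{k-1} + \cdots + \mu^{k-1}A_1\bigr),
\end{equation*}
which is holomorphic at $\mu = 0$. Now $\hA_0^{1,0}$ appears only at order $\mu^k$ on the left, while on the right the first summand contributes $r(\infty)\cdot A_0^{1,0}$ at that order and Leibniz on the second produces $\tfrac{1}{k!}\del_\mu^k|_{\mu=0}\Ad_r(\cdots)$, giving the claimed formula. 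The mirror argument — multiply the $(0,1)$-split equation by $\lambda^k$ and expand at $\lambda = 0$ — yields the second line of \eqref{eq:12}. There is no conceptual obstacle here; the only point of care is to clear poles by multiplying through by the correct power of the spectral parameter before taking derivatives, so that ordinary Taylor-coefficient matching is justified.
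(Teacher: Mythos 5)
Your argument is correct and is essentially the paper's own: both rest on the type decomposition of the gauge identity $\hd^{\lambda}=r(\lambda)\cdot d^{\lambda}$, clearing the pole by the appropriate power of the spectral parameter, matching Taylor coefficients at $\lambda=0$ (with \eqref{eq:87} giving \eqref{eq:2} and $\del_{\lambda}\Ad_{r}=\Ad_{r}\circ\ad\chi_{0}$ giving the bracket term in \eqref{eq:9}), and obtaining the remaining column by the mirror expansion at $\mu=0$ or by reality \eqref{eq:8}. The only cosmetic difference is which column you expand explicitly, so nothing further is needed.
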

\begin{proof}
Evaluate the $(1,0)$ part of \eqref{eq:86} at $\lambda=0$ to get 
\eqref{eq:10}$^{1,0}$ and evaluate \eqref{eq:87} at $\lambda=0$ to
get \eqref{eq:2}$^{1,0}$.

Meanwhile, comparing coefficients of $\lambda $ in \eqref{eq:88} yields
\begin{align*}
\hA_{-k+i}&=\frac{1}{i!}\frac{\del^{i}}{\del\lambda^i}_{|\lambda=0}\Ad_{r(\lambda)}(A_{-k}+\dots+\lambda^{k-1}
A_{-1}),\text{for $0\leq i<k$}\\
\hA_0^{0,1}&=r(0)\cdot A_{0}^{0,1}+
\frac{1}{k!}\frac{\del^{k}}{\del\lambda^{k}}_{|\lambda=0}\Ad_{r(\lambda)}(A_{-k}+\dots+\lambda^{k-1}
A_{-1}).
\end{align*}
The last equation is \eqref{eq:12}$^{0,1}$ while the cases $i=0,1$
of the first yield \eqref{eq:6}$^{0,1}$ and \eqref{eq:9}$^{0,1}$.  A
similar argument at $\mu=0$ (or an appeal to the reality of
$\hd^{\lambda}$) gives the remaining equations.
\end{proof}

We pause for a short diversion of possibly independent interest: our
dressing transformation changes a certain energy density by an
\emph{exact} $2$-form.  First a definition:
\begin{defn}\label{th:33}
Let $(V,d^{\lambda})$ be $k$-perturbed harmonic.  The \emph{energy
density} $e(V)$ of $(V,d^{\lambda})$ is the $2$-form on $\Sigma$ given by
\begin{equation*}
e(V)=i\sum_{0<j\leq k}j(A_j\wedge A_{-j}).
\end{equation*}
\end{defn}
Note that, when $V$ is harmonic (thus $k=1$), $e(V)$ is the usual
harmonic map energy density:
\begin{equation*}
e(V)=\tfrac{1}{2}(\cN_{V}\circ J^{\Sigma}\wedge\cN_V).
\end{equation*}

We now have:
\begin{prop}\label{th:32}
\label{th:3} Let $(V,d^{\lambda})$ and $(\hV,\hd^{\lambda})$ be
$k$-perturbed harmonic bundles with $\hV$ the dressing transform of
$V$ by $r$. Then
\begin{equation*}
e(\hV)=e(V)-id\Res_{\lambda=0}(\chi_0,A_{-}).
\end{equation*}
\end{prop}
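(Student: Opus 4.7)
The plan is to express $e(V)$ as a $\lambda$-residue of a natural pairing, apply the transformation formulae of Lemma~\ref{th:2} to compute $e(\hV)-e(V)$, and identify the correction term as $d$-exact using the $(1,1)$-flatness of $d^\lambda$.

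First, I would observe that expanding $(\del A_+/\del\lambda,A_-)$ as a Laurent series and picking off the coefficient of $\lambda^{-1}$ gives
\[
e(V) = i\,\Res_{\lambda=0}\bigl(\del A_+/\del\lambda,\,A_-\bigr),
\]
with the analogous formula for $\hV$.  Since $\del\hA_+/\del\lambda$ is a polynomial in $\lambda$, hence holomorphic at $0$, and \eqref{eq:88} shows $\hA_- - \Ad_r A_-$ extends holomorphically to $\lambda=0$, replacing $\hA_-$ by $\Ad_r A_-$ in the residue changes nothing.  Using $\Ad$-invariance of $(\,,\,)$ followed by \eqref{eq:87}, this residue becomes $\Res_{\lambda=0}(\del A_+/\del\lambda - A_0^{1,0}\chi_0 - [A_+,\chi_0],\,A_-)$, so that
\[
e(\hV) - e(V) = -i\,\Res_{\lambda=0}\bigl((A_0^{1,0}\chi_0,A_-) + ([A_+,\chi_0],A_-)\bigr).
\]

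Next, I would compare the integrand with $d(\chi_0,A_-)$.  Because $A_0$ is a metric connection and $(\chi_0,A_-)$ is scalar-valued, $d=d^{A_0}$ on it; Leibniz and the absence of nonzero $(0,2)$-forms on $\Sigma$ yield
\[
d(\chi_0,A_-) = (A_0^{1,0}\chi_0,\,A_-) + (\chi_0,\,A_0^{1,0}A_-),
\]
while $\Ad$-invariance rewrites $([A_+,\chi_0],A_-) = -(\chi_0,[A_+,A_-])$.  Hence the integrand differs from $d(\chi_0,A_-)$ by $-(\chi_0,\,A_0^{1,0}A_- + [A_+,A_-])$.  The decisive input is now the flatness of $d^\lambda$: taking $(1,1)$-parts in $(d^\lambda)^2=0$ gives
\[
F^{1,1}_{A_0} + (d^{A_0}A_+)^{1,1} + A_0^{1,0}A_- + [A_+,A_-] = 0,
\]
so $A_0^{1,0}A_- + [A_+,A_-]$ is polynomial in $\lambda$ with only non-negative powers; paired with $\chi_0$ (holomorphic at $\lambda=0$) it contributes nothing to the residue.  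Since $d$ commutes with $\Res_{\lambda=0}$, we conclude $e(\hV) - e(V) = -i\,d\,\Res_{\lambda=0}(\chi_0,A_-)$.

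The main obstacle is purely algebraic bookkeeping: tracking type decompositions on the Riemann surface, handling the sign conventions of the $\Ad$-invariant pairing under brackets, and spotting that the $(1,1)$-part of flatness is \emph{precisely} what converts the residual term into a quantity holomorphic at $\lambda=0$.  No analytic input is required; once the residue calculus is set up, the identity drops out of flatness in essentially one line.
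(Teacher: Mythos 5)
Your argument is correct and follows the paper's own proof essentially step for step: write the energy density as $i\,\Res_{\lambda=0}(\del A_{+}/\del\lambda\wedge A_{-})$, use \eqref{eq:87} and the holomorphicity of $\hA_{-}-\Ad_rA_{-}$ at $\lambda=0$ to drop the gauge, then combine the metric Leibniz rule, $\Ad$-invariance and the flatness of $d^{\lambda}$ (whose curvature identity on a Riemann surface is exactly your $(1,1)$-statement) to identify the leftover term as $d(\chi_0,A_{-})$ up to a quantity holomorphic at $\lambda=0$. No gaps; this is the same computation as in the paper.
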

\begin{proof}
We begin by observing that our energy density is a residue:
\begin{equation*}
\sum_{0<j\leq k}j(A_j\wedge A_{-j})=\Res_{\lambda=0}(\del
A^{+}/\del\lambda\wedge A^{-}).
\end{equation*}
Now $\hat{A}^{-}-\Ad_rA^{-}$ is holomorphic near $\lambda=0$ so that
\eqref{eq:87} yields:
\begin{align*}
\Res_{\lambda=0}(\del\hA^{+}/\del\lambda\wedge\hA^{-})&=
\Res_{\lambda=0}\bigl(\Ad_r(\del
A^{+}/\del\lambda-A_0^{1,0}\chi_{0}-[A_{+},\chi_{0}])\wedge
\Ad_{r}A^{-}\bigr)\\
&=\Res_{\lambda=0}\bigl(\del
A^{+}/\del\lambda-A_0^{1,0}\chi_{0}-[A_{+},\chi_{0}]\wedge
A^{-}\bigr).
\end{align*}
Since $A_{-}$ is a $(0,1)$-form, $(A_0^{1,0}\wedge A_{-})=(A_0\wedge
A_{-})$ and we have
\begin{align*}
(A_0^{1,0}\chi_{0}+[A_{+},\chi_{0}]\wedge A^{-})&=
(A_0\chi_{0}+[A_{+},\chi_{0}]\wedge
A^{-})\\
&=d(\chi_0,A_{-})-(\chi_0,d^{A_0}A_{-})-(\chi_0,[A_{+}\wedge
A_{-}])\\
&=d(\chi_0,A_{-})+(\chi_0,R^{A_0}+d^{A_0}A_{+}),
\end{align*}
where, for the last equality, we have used the flatness of
$d^{\lambda}$.  Since $(\chi_0,R^{A_0}+d^{A_0}A_{+})$ is holomorphic
near $\lambda=0$ and so has no residue there, we conclude:
\begin{equation*}
\Res_{\lambda=0}(\del\hA^{+}/\del\lambda\wedge\hA^{-})=
\Res_{\lambda=0}\bigl(\del
A^{+}/\del\lambda\wedge A_{-})-d\Res_{\lambda=0}(\chi_0,A_{-})
\end{equation*}
and the result follows at once.
\end{proof}
\begin{corol}
Let $V,\hV$ be harmonic bundles with $\hV$ the dressing transform of
$V$.  Then the harmonic map energy densities of $V$ and $\hV$
differ by an exact $1$-form.
\end{corol}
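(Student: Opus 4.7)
The plan is to derive this corollary as an immediate specialisation of Proposition~\ref{th:32} to the harmonic case $k=1$. First I would note that, as recorded right after Definition~\ref{th:33}, when $(V,d^\lambda)$ is $1$-perturbed harmonic (i.e.\ genuinely harmonic), the energy density $e(V)$ of Definition~\ref{th:33} coincides with the classical harmonic map energy density $\tfrac12(\mathcal{N}_V\circ J^\Sigma\wedge\mathcal{N}_V)$, and the same holds for $\hat V$. So the conclusion to be proved is precisely that $e(\hat V)-e(V)$ is an exact $2$-form (the statement's ``exact $1$-form'' is a slip — the energy densities themselves are $2$-forms).

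Next I would invoke Proposition~\ref{th:32} directly in the form
\begin{equation*}
e(\hat V)=e(V)-i\,d\,\Res_{\lambda=0}(\chi_0,A_{-}),
\end{equation*}
observing that its hypotheses ask only that $(V,d^\lambda)$ and $(\hat V,\hat d^\lambda)$ be $k$-perturbed harmonic, with no restriction on $k$, so the formula applies verbatim with $k=1$. In this case the $(0,1)$-tail simplifies to $A_{-}=\lambda^{-1}A_{-1}$, and since $\chi_0=r^{-1}\partial r/\partial\lambda$ is holomorphic near $\lambda=0$, the residue reduces to the globally defined $1$-form $(\chi_0(0),A_{-1})$ on $\Sigma$.

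The identity therefore reads
\begin{equation*}
e(\hat V)-e(V)=-i\,d\bigl(\chi_0(0),A_{-1}\bigr),
\end{equation*}
which exhibits the difference as $d$ of a smooth $1$-form on $\Sigma$, completing the argument. There is essentially no obstacle here: the real content has already been done in Proposition~\ref{th:32}, and the corollary is simply the act of unpacking the residue in the harmonic case and noticing that the correction term is manifestly exact.
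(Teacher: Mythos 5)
Your proposal is correct and is exactly the intended argument: the corollary is an immediate specialisation of Proposition~\ref{th:32} to $k=1$, using the observation after Definition~\ref{th:33} that $e(V)$ is then the usual harmonic map energy density, with the residue collapsing to $(\chi_0(0),A_{-1})$ since $\chi_0$ is holomorphic at $\lambda=0$. Your reading of ``exact $1$-form'' as meaning the difference is $d$ of a $1$-form is also the right one.
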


So far, just as in section~\ref{sec:spectral-deformation}, our
analysis could be applied \emph{mutatis mutandis} to $k$-perturbed
harmonic maps into any symmetric space but now we specialise to the
case of particular interest to us: we suppose that $V$ is the
central sphere congruence of a constrained Willmore surface
$\Lambda$ with multiplier $q$.  In this case, Theorem~\ref{th:1}
yields a $2$-perturbed harmonic $\hV$ and we are going to show that
$\hV$ is also the central sphere congruence of a constrained
Willmore surface $\hL$.

Our candidates for a new constrained Willmore surface and
corresponding multiplier are
\begin{subequations}
\label{eq:32}
\begin{align}\label{eq:72}
\hL&=r(0)\Lambda^{1,0}\cap r(\infty)\Lambda^{0,1},\\
\hq&=\hA_{2}+\hA_{-2}=\Ad_{r(\infty)}q^{1,0}+\Ad_{r(0)}q^{0,1},\label{eq:73}
\end{align}
\end{subequations}
where the last identity is \eqref{eq:6}.  Since
$\overline{r(0)\Lambda^{1,0}}=r(\infty)\Lambda^{0,1}$,
Lemma~\ref{th:4}(\ref{item:7}) assures us that $\hL$ is a real line
subbundle of $\unC^{n+2}$.

We now have the main result of this section:
\begin{thm}
\label{th:5} Let $(V,d^{\lambda})$ be the $2$-perturbed harmonic
central sphere congruence of a constrained Willmore surface
$\Lambda$ with multiplier $q$.  Let $(\hV,\hd^{\lambda})$ be the
dressing transform of $(V,d^{\lambda})$ by $r$ and define $\hL$,
$\hq$ by \eqref{eq:32}.

Then $\hL$ is conformal and a constrained Willmore surface with
multiplier $\hq$ and central sphere congruence $\hV$ on the open set
where it immerses.
\end{thm}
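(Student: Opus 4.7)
The strategy is to verify the hypotheses of Theorem~\ref{CWzerocurvature} for the triple $(\hL,\hV,\hq)$. By Corollary~\ref{th:4} together with $\overline{r(0)}=r(\infty)$, the intersection $\hL=r(0)\Lambda^{1,0}\cap r(\infty)\Lambda^{0,1}$ is a real null line subbundle of $\unC^{n+2}$ sitting inside $\hV$. The heart of the proof is the identification $\hL^{1,0}=r(0)\Lambda^{1,0}$ and $\hL^{0,1}=r(\infty)\Lambda^{0,1}$; once this is in hand, conformality, the central sphere property, and the multiplier condition all follow by bookkeeping.

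The crux and main obstacle is to show that $\hA_i\hat\sigma=0$ for every $i\ne 0$ whenever $\hat\sigma\in\Gamma\hL$. Set $\tau:=r(0)^{-1}\hat\sigma\in\Gamma\Lambda^{1,0}$. Lemma~\ref{th:34} gives $q^{0,1}\tau=0$, so $A_{-2}\tau=0$, and the conjugated central sphere condition yields $\cN_V^{0,1}\tau=A_{-1}\tau=0$. Differentiating the twist~\eqref{eq:7} at $\lambda=0$ produces the anti-commutation $\{\rho_V,\chi_0(0)\}=0$, placing $\chi_0(0)\in\Gamma(V\wedge V^\perp)$; hence $\chi_0(0)\tau\in V^\perp$, and since $q^{0,1}$ has values in $V\wedge V$ and so annihilates $V^\perp$, one also gets $A_{-2}(\chi_0(0)\tau)=0$. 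Writing $\psi(\lambda):=r(\lambda)^{-1}\hat\sigma$, these facts translate into $A_{-2}\psi(\lambda)=O(\lambda^2)$ and $A_{-1}\psi(\lambda)=O(\lambda)$ near $\lambda=0$, so the potential $\lambda^{-2}$ and $\lambda^{-1}$ poles in $r(\lambda)\,d^\lambda\psi(\lambda)=\hd^\lambda\hat\sigma$ cancel. Matching Laurent coefficients of $\hd^\lambda\hat\sigma=\sum_{|i|\le 2}\lambda^i\hA_i\hat\sigma$ then forces $\hA_{-1}\hat\sigma=\hA_{-2}\hat\sigma=0$, and the mirror argument at $\lambda=\infty$ gives $\hA_1\hat\sigma=\hA_2\hat\sigma=0$. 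The delicate point is the second-order vanishing $A_{-2}\psi=O(\lambda^2)$, which hinges precisely on the anti-commutation of $\chi_0(0)$ with $\rho_V$.

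With these vanishings, $d\hat\sigma=\hA_0\hat\sigma$ on $\Gamma\hL$. For a conformal lift $\sigma$ of $\Lambda$ and a local holomorphic coordinate $z$, the identities $(\del_z^2\sigma,\sigma)=(\del_z^2\sigma,\del_z\sigma)=0$ give $\pi_V(\del_z^2\sigma)\in\Lambda^{1,0}$ and hence $\cD_V^{1,0}\Lambda^{1,0}\subset\Lambda^{1,0}$; combined with $q^{1,0}\Lambda^{1,0}\subset\Lambda$ (again Lemma~\ref{th:34}), the partial connection $A_0^{1,0}=\cD_V^{1,0}-q^{1,0}$ preserves $\Lambda^{1,0}$. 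Thus $d^{1,0}\hat\sigma=\hA_0^{1,0}\hat\sigma=r(0)(A_0^{1,0}\tau)\in r(0)\Lambda^{1,0}$, so $\hL^{1,0}\subset r(0)\Lambda^{1,0}$, with equality by rank on the immersed locus; conjugately $\hL^{0,1}=r(\infty)\Lambda^{0,1}$. Conformality is immediate since $r(0)$ is orthogonal, and $\hL^{(1)}\subset\hV$. Applying the same regularity argument to $\hat\xi\in\hL^{0,1}$ at $\lambda=\infty$ (using $\cN_V^{1,0}\Lambda^{0,1}=q^{1,0}\Lambda^{0,1}=0$) yields $\hA_1\hL^{0,1}=0$, which is the central sphere condition $\hat{\cN}_{\hV}^{1,0}\hL^{0,1}=0$. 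Finally, by Lemma~\ref{th:2}, $\hA_{-2}=\Ad_{r(0)}q^{0,1}$ and $\hA_{2}=\Ad_{r(\infty)}q^{1,0}$ take values respectively in $r(0)\Lambda\wedge r(0)\Lambda^{1,0}$ and $r(\infty)\Lambda\wedge r(\infty)\Lambda^{0,1}$, both contained in $\hL\wedge\hL^{(1)}$; so $\hq=\hA_2+\hA_{-2}$ is a valid multiplier and Theorem~\ref{CWzerocurvature} concludes.
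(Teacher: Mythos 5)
Your proposal is correct and takes essentially the same route as the paper: both reduce to Theorem~\ref{CWzerocurvature} by establishing $\hL^{1,0}=r(0)\Lambda^{1,0}$ and $\hL^{0,1}=r(\infty)\Lambda^{0,1}$, using the anti-commutation of $\chi_0(0)$ with $\rho_V$ obtained by differentiating \eqref{eq:7}, the type properties of $q$ and $\cN_V$, and the stability of $\Lambda^{1,0}$ (equivalently $\Lambda^{0,1}$) under $A_0^{1,0}$ (resp.\ $A_0^{0,1}$) together with $\hA_0=r\cdot A_0$. Your pole-cancellation argument on sections of $\hL$ (and later of $\hL^{0,1}$) simply re-derives the consequences of the Laurent-coefficient formulae of Lemma~\ref{th:2} that the paper invokes directly, so the difference is one of bookkeeping rather than of method.
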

We therefore extend the terminology of Definition~\ref{th:9} and say
that $\hL$ is \emph{the dressing transform of $\Lambda$ by $r$}.
\begin{proof}
There are three things to prove here.  For conformality, we must
show that $\hL^{1,0}$ is isotropic.  To see that $\hL$ is
constrained Willmore with multiplier $\hq$, we need to show that
$\hq^{1,0}\in\Omega^{1,0}(\Wedge^2\hL^{0,1})$ so that $\hq$ gives
rise to a holomorphic quadratic differential and that
$\cN_{\hV}^{1,0}\hL^{0,1}=0$ so that $\hV$ is the central sphere
congruence of $\hL$.  Theorem~\ref{CWzerocurvature}
then establishes the conclusion.

For all this, we prove that $\hL^{0,1}=r(\infty)\Lambda^{0,1}$, whence, by
reality, $\hL^{1,0}=r(0)\Lambda^{1,0}$, and
$\cN_{\hV}^{0,1}r(0)\Lambda^{1,0}=0$.  It is convenient to prove
the second of these assertions first.  From \eqref{eq:9}, we have
\begin{equation}
\cN_{\hV}^{0,1}=\Ad_{r(0)}(\cN_V^{0,1}+[\chi_0(0),q^{0,1}])
\end{equation}
so that we need to establish that
\begin{equation*}\label{eq:16}
(\cN_V^{0,1}+[\chi_0(0),q^{0,1}])\Lambda^{1.0}=0.
\end{equation*}
However, $\cN_V^{0,1}\Lambda^{1,0}$ vanishes, since $V$ is the
central sphere congruence of $\Lambda$, leaving us with  the
$[\chi_0(0),q^{0,1}]$ term.  However, differentiating \eqref{eq:7} at
$\lambda=0$ shows that $\chi_0(0)$ anti-commutes with $\rho_V$ and so
takes values in $V\wedge V^{\perp}$.  On the other hand, $q^{0,1}$
takes values in $\Wedge^2\Lambda^{1,0}$ so that their bracket lies
in $\Lambda^{1,0}\wedge V^{\perp}$ and so annihilates
$\Lambda^{1,0}$ as required.  Similarly,
$\cN_{\hV}^{1,0}r(\infty)\Lambda^{0,1}=0$ and, in particular,
$\cN_{\hV}\hL$ vanishes.

Again $\hq^{0,1}r(0)\Lambda^{1,0}=r(0)q^{0,1}\Lambda^{1,0}=0$ and,
similarly, $\hq^{1,0}r(\infty)\Lambda^{0,1}$ vanishes.  In
particular, $\hq\hL=0$.

It follows that the operators $d^{0,1}$, $\cD_{\hV}^{0,1}$ and
$(\cD_{\hV}-\hq)^{0,1}$ all coincide on $\Gamma\hL$.  However,
\eqref{eq:10} reads
\begin{equation*}
(\cD_{\hV}-\hq)^{0,1}=r(\infty)\cdot(\cD_V-q)^{0,1}
\end{equation*}
and $\Lambda^{0,1}$ is $(\cD_V-q)^{0,1}$-stable whence
$r(\infty)\Lambda^{0,1}$ is $(\cD_{\hV}-\hq)^{0,1}$-stable.  It
follows at once that
$d^{0,1}:\Gamma\hL\to\Omega^{0,1}(r(\infty)\Lambda^{0,1})$ so that
$\hL^{0,1}=r(\infty)\Lambda^{0,1}$ and, similarly,
$\hL^{1,0}=r(0)\Lambda^{1,0}$, since $\hL$ immerses.
\end{proof}
We remark that $q$ vanishes exactly when $\hq$ does so that our
dressing transforms when applied to Willmore surfaces give Willmore
surfaces once more.

Proposition~\ref{th:32} applies in the current setting:
\begin{corol}
Let $\hat{\Lambda}$ be a dressing transform of a constrained Willmore
surface $\Lambda$.  Then the Willmore densities of $\Lambda$ and
$\hat{\Lambda}$ differ by an exact $1$-form.
\end{corol}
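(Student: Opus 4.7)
The plan is to reduce the corollary to Proposition~\ref{th:32} by proving that, for a constrained Willmore surface $\Lambda$ with $2$-perturbed harmonic central sphere congruence $V$, the pointwise Willmore density of $\Lambda$ coincides with the energy density $e(V)$ of Definition~\ref{th:33}. Given that identification, Proposition~\ref{th:32} immediately delivers the exact-form difference.

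First, I would unpack the Willmore density in type. Since $\cN_{V}\circ J^{\Sigma}=i\cN_{V}^{1,0}-i\cN_{V}^{0,1}$, the antisymmetric pairing appearing in \eqref{eq:79} simplifies pointwise to
\[
\tfrac12(\cN_{V}\circ J^{\Sigma}\wedge \cN_{V})=i(\cN_{V}^{1,0}\wedge \cN_{V}^{0,1}).
\]
On the other hand, Theorem~\ref{CWzerocurvature} identifies the coefficients of $d^{\lambda}$ as $A_{\pm 1}=\cN_{V}^{1,0},\cN_{V}^{0,1}$ and $A_{\pm 2}=q^{1,0},q^{0,1}$, so Definition~\ref{th:33} reads
\[
e(V)=i(\cN_{V}^{1,0}\wedge \cN_{V}^{0,1})+2i(q^{1,0}\wedge q^{0,1}).
\]
Comparing these two expressions, the claim $e(V)=W_{\Lambda}$ reduces to $(q^{1,0}\wedge q^{0,1})=0$, which is the only non-routine step.

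For this vanishing, I would invoke Lemma~\ref{th:34}: $q^{1,0}\in\Omega^{1,0}(\Lambda\wedge\Lambda^{0,1})$ and, by conjugation, $q^{0,1}\in\Omega^{0,1}(\Lambda\wedge\Lambda^{1,0})$. Under the identification $\Wedge^{2}\R^{n+1,1}\cong\fo(\R^{n+1,1})$, the induced pairing on decomposables expands as
\[
(u_1\wedge v_1,u_2\wedge v_2)=(u_1,u_2)(v_1,v_2)-(u_1,v_2)(v_1,u_2).
\]
Pairing a value of $q^{1,0}$ with a value of $q^{0,1}$ thus involves only inner products of vectors drawn from $\Lambda$, $\Lambda^{1,0}$ and $\Lambda^{0,1}$. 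Since $\Lambda\subset\Lambda^{1,0}\cap\Lambda^{0,1}$ and both $\Lambda^{1,0}$ and $\Lambda^{0,1}$ are isotropic, every such inner product vanishes.

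The same reasoning applies verbatim to $\hL$ with the multiplier $\hq$ produced by Theorem~\ref{th:5}, yielding $e(\hV)=W_{\hL}$ at each point. Proposition~\ref{th:32} then gives
\[
W_{\hL}-W_{\Lambda}=e(\hV)-e(V)=-id\Res_{\lambda=0}(\chi_{0},A_{-}),
\]
exhibiting the difference as $d$ of an explicit $1$-form. The main obstacle in the whole argument is really just the quick isotropy check that kills the $q$-contribution to $e(V)$; once that is in hand, the corollary drops out of Proposition~\ref{th:32}.
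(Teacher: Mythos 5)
Your proposal is correct and follows essentially the same route as the paper: identify the Willmore density of a constrained Willmore surface with the perturbed-harmonic energy density $e(V)$ by showing the $q$-contribution $i\,2(q^{1,0}\wedge q^{0,1})$ vanishes, and then quote Proposition~\ref{th:32}. The only (inessential) difference is that you kill the $q$-term via the type decomposition $q^{1,0}\in\Omega^{1,0}(\Lambda\wedge\Lambda^{0,1})$ and isotropy of $\Lambda^{1,0},\Lambda^{0,1}$, whereas the paper notes directly that $q$ takes values in the isotropic bundle $\Lambda\wedge\Lambda^{\perp}$ --- the same vanishing, argued one step more coarsely.
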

\begin{proof}
We know from section~\ref{sec:constr-willm-surf} that the Willmore density of a surface
coincides with the harmonic map energy
$\frac{1}{2}(\cN_V\circ J^{\Sigma}\wedge\cN_V)$ of its central sphere congruence
$V$.  On the other hand, in the present situation, the energy $e(V)$
of definition~\ref{th:33} is given by
\begin{equation*}
e(V)=\frac{1}{2}(\cN_V\circ J\Sigma\wedge\cN_V)+(q\circ J^{\Sigma}\wedge q)=
\frac{1}{2}(\cN_V\circ J^{\Sigma}\wedge\cN_V),
\end{equation*}
since $q$ takes values in the isotropic bundle
$\Lambda\wedge\Lambda^{\perp}$.
\end{proof}

We conclude this analysis by showing that the holomorphic quadratic
differentials that are the Lagrange multipliers for $\Lambda$ and
$\hL$ coincide. First, we extend the scope of \eqref{eq:Q}:
\begin{lemma}
\label{th:36}
Let $\Lambda$ be a conformal immersion with central sphere congruence
$V$.  Let $q\in\Omega^1(\Lambda\wedge\Lambda^{(1)}$ corresponding to
a holomorphic quadratic differential $Q$ via \eqref{eq:Q}.  Then, for
any $\tau\in\Gamma\Lambda^{1,0}$, we have
\begin{equation}
\label{eq:91}
Q^{0,2}\tau=q^{0,1}(\cD_V^{0,1}\tau).
\end{equation}
\end{lemma}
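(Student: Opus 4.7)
The plan is to verify \eqref{eq:91} by showing both sides are $C^{\infty}(\Sigma,\C)$-linear in $\tau$, reducing the identity to a pointwise check on a local frame of $\Lambda^{1,0}$.

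The first step is the annihilation identity $q^{0,1}\tau=0$ for $\tau\in\Gamma\Lambda^{1,0}$. By Lemma~\ref{th:34} and the reality of $q$, $q^{0,1}\in\Omega^{1}(\Lambda\wedge\Lambda^{1,0})$; writing the values as $u\wedge v$ with $u\in\Lambda$ and $v\in\Lambda^{1,0}$, we have $(u,\tau)=0$ from $\Lambda^{1,0}\subset\Lambda^{(1)}\subset\Lambda^{\perp}$ and $(v,\tau)=0$ from the isotropy of $\Lambda^{1,0}$ (conformality of $\Lambda$). The Leibniz rule then gives $C^{\infty}$-linearity of $\tau\mapsto q^{0,1}(\cD_V^{0,1}\tau)$, and since $\tau\mapsto Q^{0,2}\tau$ is trivially tensorial, both sides define bundle endomorphisms of $\Lambda^{1,0}$.

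Next I would pick a local lift $\sigma\in\Gamma\Lambda$ and a holomorphic coordinate $z$, so that $g_{\sigma}=e^{2u}|dz|^{2}$ and $\{\sigma,\del\sigma\}$ is a local frame for $\Lambda^{1,0}$ (with $\del=\del/\del z$). On $\sigma$, the identity is immediate from the definition \eqref{eq:Q}, since $\cN_V\sigma=0$ gives $\cD_V^{0,1}\sigma=d^{0,1}\sigma$. On $\del\sigma$, the central-sphere-congruence property $\cN_V^{0,1}\Lambda^{1,0}=0$ yields $\cD_V^{0,1}(\del\sigma)=\delbar\del\sigma=\del\delbar\sigma$, and in isothermal coordinates $\del\delbar\sigma=(e^{2u}/4)\Delta\sigma$. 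Writing $q^{0,1}$ in the $(0,1)$-direction as $c\,\sigma\wedge\del\sigma$ (possible since $\Lambda\wedge\Lambda^{1,0}$ is complex rank one) and applying $(u\wedge v)(w)=(u,w)v-(v,w)u$ with the standard identities $(\sigma,\Delta\sigma)=-2$ and $(\del\sigma,\Delta\sigma)=0$ (the latter from $(\del\sigma,\del\sigma)=0$ differentiated by $\delbar$) gives $q^{0,1}(\del\delbar\sigma)=-ch\,\del\sigma$, where $h=(\del\sigma,\delbar\sigma)=e^{2u}/2$. Comparing with the normalisation $q^{0,1}(\delbar\sigma)=-ch\,\sigma=Q^{0,2}(\delbar,\delbar)\sigma$ then gives exactly $q^{0,1}(\del\delbar\sigma)=Q^{0,2}(\delbar,\delbar)\,\del\sigma$.

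The main technical content is this last calculation on $\del\sigma$: one must align the rank-one description of $q^{0,1}$ with the decomposition of $\del\delbar\sigma$ along $\Delta\sigma\in V$, but once the tensoriality is in place everything else reduces either to the isotropy of $\Lambda^{1,0}$, to the CSC condition, or to the defining formula \eqref{eq:Q} for $Q$.
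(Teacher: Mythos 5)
Your proof is correct and follows essentially the same route as the paper's: reduce by tensoriality (via $q^{0,1}\Lambda^{1,0}=0$) to a frame check, use that $\Lambda\wedge\Lambda^{1,0}$ has rank one to write $q^{0,1}$ as a multiple of $\sigma\wedge\del\sigma$, and evaluate using the isotropy and orthogonality relations. The only difference is cosmetic: the paper normalises $(d_Z\sigma,d_{\bar Z}\sigma)=1$ and computes $(\cD_V)_{\bar Z}d_Z\sigma$ directly by metric compatibility, whereas you route the same computation through isothermal coordinates, $\Delta\sigma$ and the conformal factor.
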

\begin{proof}
Choose $Z\in T^{1,0}\Sigma$ and $\sigma\in\Gamma\Lambda$ with
$(d_Z\sigma,d_{\bar{Z}}\sigma)=1$.  Since
$\Lambda\wedge\Lambda^{1,0}=\bigwedge^2\Lambda^{1,0}$ has rank $1$,
$q_{\bar{Z}}$ is a multiple of $\sigma\wedge d_{Z}\sigma$ and
\eqref{eq:Q} quickly yields
\begin{equation}
\label{eq:92}
q_{\bar{Z}}=-Q(\bar{Z},\bar{Z})\sigma\wedge d_Z\sigma.
\end{equation}
The right hand side of \eqref{eq:91} is tensorial in $\tau$ so that,
in view of \eqref{eq:Q}, it suffices to check \eqref{eq:91} with
$\tau=d_{Z}\sigma$.  But
\begin{align*}
(\sigma\wedge d_Z\sigma)(\cD_{V})_{\bar{Z}}d_Z\sigma&=
(\sigma,(\cD_{V})_{\bar{Z}}d_Z\sigma)d_Z\sigma-
(d_Z\sigma,(\cD_{V})_{\bar{Z}}d_Z\sigma)\sigma\\
&=-(d_{\bar{Z}}\sigma,d_Z\sigma)d_Z\sigma+\half
d_{\bar{Z}}(d_Z\sigma,d_Z\sigma)\\
&=-d_Z\sigma
\end{align*}
and the result follows at once from \eqref{eq:92}.
\end{proof}
\begin{prop}
\label{th:6} Let $\Lambda$ be a constrained Willmore surface and
$\hL$ the dressing transform of $\Lambda$ by $r$.  Then
$Q_{\hL}=Q_{\Lambda}$.
\end{prop}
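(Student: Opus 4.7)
The plan is to compare $Q_{\hL}$ and $Q_{\Lambda}$ directly by applying Lemma~\ref{th:36} to $\hL$ and relating the resulting expression back to $\Lambda$ via the gauge transformation formulas of Lemma~\ref{th:2}. Specifically, pick $\tau\in\Gamma\Lambda^{1,0}$ and set $\hat\tau=r(0)\tau$; since $\hL^{1,0}=r(0)\Lambda^{1,0}$ from the proof of Theorem~\ref{th:5}, we have $\hat\tau\in\Gamma\hL^{1,0}$, and Lemma~\ref{th:36} gives $Q_{\hL}^{0,2}\hat\tau=\hq^{0,1}(\cD_{\hV}^{0,1}\hat\tau)$ (the formula being read pointwise at $\bar Z$).

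Because $\hL^{1,0}$ is isotropic and $\hq^{0,1}\in\Omega^{0,1}(\hL\wedge\hL^{1,0})$, we have $\hq^{0,1}\hat\tau=0$, so $\cD_{\hV}^{0,1}\hat\tau=\hA_0^{0,1}\hat\tau$. Combining \eqref{eq:10} with \eqref{eq:6}, I obtain
\begin{equation*}
Q_{\hL}^{0,2}\hat\tau = r(0)\,q^{0,1}\bigl(s\cdot(\cD_V-q)^{0,1}(s^{-1}\tau)\bigr), \qquad s:=r(0)^{-1}r(\infty),
\end{equation*}
using $r(\infty)^{-1}\hat\tau=s^{-1}\tau$. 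On the other side, Lemma~\ref{th:36} applied to $\Lambda$ gives $Q_\Lambda^{0,2}\tau=q^{0,1}(\cD_V-q)^{0,1}\tau$ (the pointwise nilpotency $q^{0,1}q^{0,1}=0$ lets us replace $\cD_V^{0,1}$ by $(\cD_V-q)^{0,1}$). Hence, expanding $(\cD_V-q)^{0,1}(s^{-1}\tau)$ by the Leibniz rule together with the identity $s\cD_V^{0,1}s^{-1}=-(\cD_V^{0,1}s)s^{-1}$ (which follows from $\cD_V^{0,1}(ss^{-1})=0$), and using $q^{0,1}\tau=0$ for $\tau\in\Lambda^{1,0}$, the desired identity $Q_{\hL}^{0,2}\hat\tau=r(0)Q_\Lambda^{0,2}\tau$ reduces to the cancellation
\begin{equation*}
q^{0,1}\bigl((\cD_V^{0,1}s)s^{-1}\tau + s\,q^{0,1}\,s^{-1}\tau\bigr)=0.
\end{equation*}

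Verifying this cancellation is the main obstacle. To handle it, I would exploit the companion formula~\eqref{eq:12}, which expresses $\hA_0^{0,1}$ using the gauge $r(0)$ (matching $\hL^{1,0}=r(0)\Lambda^{1,0}$) plus a correction built from the first and second Taylor coefficients of $r(\lambda)$ at $\lambda=0$. Equating \eqref{eq:10} with~\eqref{eq:12} yields a precise relation expressing $s$ and $\cD_V^{0,1}s$ in terms of $\chi_0(0)$ and its derivative; combined with the nilpotency $q^{0,1}q^{0,1}=0$, the fact that $s$ commutes with $\rho_V$ (so $s$ preserves $V$ and $V^\perp$), and the vanishing of $q^{0,1}$ on the isotropic bundles $\Lambda^{1,0}$ and $\Lambda^{0,1}$, these ingredients should force the required cancellation and complete the proof.
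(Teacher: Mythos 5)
Your reduction is sound as far as it goes: applying Lemma~\ref{th:36} to $\hL$, using $\hL^{1,0}=r(0)\Lambda^{1,0}$, $\hq^{0,1}=\Ad_{r(0)}q^{0,1}$ and \eqref{eq:10}, and peeling off $Q^{0,2}_{\Lambda}$ by the Leibniz rule correctly repackages the first half of the paper's argument (the paper instead inserts \eqref{eq:12}, so its correction term is the second Taylor coefficient $\tfrac12\del^2_{\lambda}{}_{|\lambda=0}\Ad_{r(\lambda)}(q^{0,1}+\lambda\cN_V^{0,1})$ applied to $\hat\tau$, rather than your expression in $s=r(0)^{-1}r(\infty)$; both equal $Q^{0,2}_{\hL}\hat\tau-Q^{0,2}_{\Lambda}\hat\tau$, so the two routes are equivalent). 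But your proof stops exactly where the real work begins: the cancellation $q^{0,1}\bigl((\cD_V^{0,1}s)s^{-1}\tau+s\,q^{0,1}s^{-1}\tau\bigr)=0$ \emph{is} the proposition, and asserting that equating \eqref{eq:10} with \eqref{eq:12}, nilpotency of $q^{0,1}$, and $[s,\rho_V]=0$ should force it is not an argument. Those facts give a relation between the correction term and second-order data of $r$ at $\lambda=0$, but they do not by themselves show that this term is annihilated by $q^{0,1}$, i.e.\ that it is orthogonal to $\Lambda^{1,0}$.

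In the paper this vanishing is a genuinely second-order computation: one sets $C=(\del/\del\lambda+\ad\chi_0)^{2}(q^{0,1}+\lambda\cN_V^{0,1})_{|\lambda=0}$ and proves $C\Lambda^{1,0}\subseteq\Lambda^{1,0}$ using (i) the twisting relation \eqref{eq:7} differentiated twice, giving $[C,\rho_V]=0$ so that $C$ preserves $V$; (ii) maximal isotropy of $\Lambda^{1,0}$ in $V$, reducing the claim to $(C\tau_1,\tau_2)=0$ for $\tau_1,\tau_2\in\Lambda^{1,0}$; (iii) the identity $(\cN_V^{0,1}+[\chi_0(0),q^{0,1}])\Lambda^{1,0}=0$ established in the proof of Theorem~\ref{th:5}; and (iv) skew-symmetry together with the fact that $q^{0,1}+\lambda\cN_V^{0,1}$ annihilates $\Lambda^{1,0}$ for every $\lambda$. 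None of this appears in your sketch, and it is not a formal consequence of the ingredients you list, so the crucial step is a genuine gap. A small side point: $q^{0,1}$ annihilates $\Lambda^{1,0}$ but not $\Lambda^{0,1}$ (it is $q^{1,0}$ that kills $\Lambda^{0,1}$), so your appeal to the vanishing of $q^{0,1}$ on both isotropic bundles needs correcting, though it does not affect the part of the reduction you actually carried out.
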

\begin{proof}
From Lemma~\ref{th:36}, for any section $\hat{\tau}$ of $\hL^{1,0}$,
we have
\begin{equation*}
Q^{0,2}_{\hL}\hat{\tau}=\hq^{0,1}(\cD_{\hV}^{0,1}\hat{\tau})
=\hq^{0,1}((\cD_{\hV}-\hq)^{0,1}\hat{\tau}).
\end{equation*}
We write $\hat{\tau}=\Ad_{r(0)}\tau$, for
$\tau\in\Gamma\Lambda^{1,0}$, and apply \eqref{eq:15} to see that
\begin{align*}
\hq^{0,1}((\cD_{\hV}-\hq)^{0,1}\hat{\tau})&=r(0)q^{0,1}((\cD_V-q)^{0,1}\tau)+
\hq^{0,1}(\frac{1}{2}\frac{\del^2}{\del\lambda^2}_{|\lambda=0}
\Ad_{r(\lambda)}(q^{0,1}+\lambda\cN_V^{0,1})\hat{\tau})\\
&=Q^{0,2}_{\Lambda}\hat{\tau}+
\hq^{0,1}(\frac{1}{2}\frac{\del^2}{\del\lambda^2}_{|\lambda=0}
\Ad_{r(\lambda)}(q^{0,1}+\lambda\cN_V^{0,1})\hat{\tau})
\end{align*}
so that the issue is to show that the last term vanishes.

For this, note that
\begin{equation}\label{eq:48}
\frac{\del^2}{\del\lambda^2}_{|\lambda=0}
\Ad_{r(\lambda)}(q^{0,1}+\lambda\cN_V^{0,1})=
\Ad_{r(0)}((\del/\del\lambda+\ad
\chi_0)^{2}(q^{0,1}+\lambda\cN_V^{0,1})_{\lambda=0})
\end{equation}
so that it suffices to show that $C:=(\del/\del\lambda+\ad
\chi_0)^{2}(q^{0,1}+\lambda\cN_V^{0,1})_{\lambda=0}$ preserves
$\Lambda^{1,0}$.  We may write \eqref{eq:48} as
\begin{equation*}
\Ad_{r(0)}C=\frac{\del^2}{\del\lambda^2}_{|\lambda=0}g(\lambda),
\end{equation*}
where $g(\lambda)=\Ad_{r(\lambda)}(q^{0,1}+\lambda\cN^{0,1}_V)$.  From
\eqref{eq:7}, we have $\Ad_{\rho_{\hV}}g(\lambda)=g(-\lambda)$ and,
differentiating this twice, we learn that $C$ commutes with $\rho_V$
and so preserves $V$.  Since $\Lambda^{1,0}$ is maximal isotropic in
$V$, we are reduced to showing that $(C\tau_1,\tau_2)$ vanishes for
all $\tau_1,\tau_2\in\Lambda^{1,0}$.  Now set
$B_{\lambda}=(\del/\del\lambda+\ad \chi_0)(q^{0,1}+\lambda\cN_V^{0,1})$
so that $C=(\del/\del\lambda+\ad \chi_0)_{|\lambda=0}B_{\lambda}$.  We
observe that
\begin{equation*}
B_0=\cN_V^{0,1}+[\chi_0(0),q^{0,1}]
\end{equation*}
which, from \eqref{eq:16}, annihilates $\Lambda^{1,0}$.  We compute:
\begin{align*}
(C\tau_1,\tau_2)&=(\del/\del\lambda_{|\lambda=0}B_{\lambda}\tau_1,\tau_2)+([\chi_0(0),B_0]\tau_{1},\tau_2)\\
&=(\del/\del\lambda_{|\lambda=0}B_{\lambda}\tau_1,\tau_2)-
(B_0\tau_1,\chi_0(0)\tau_2)+(\chi_0(0)\tau_1,B_0\tau_{2})\\
&=\del/\del\lambda_{|\lambda=0}(B_{\lambda}\tau_1,\tau_2).
\end{align*}
However, $B_{\lambda}=(\del/\del\lambda+\ad \chi_0)A_{\lambda}$ for
$A_{\lambda}=q^{0,1}+\lambda\cN_V^{0,1}$ which last annihilates
$\Lambda^{1,0}$, for all $\lambda$, so that repeating the last
computation with $B_{\lambda}$ gives
\begin{equation*}
(B_{\lambda}\tau_1,\tau_2)=\del/\del\lambda(A_{\lambda}\tau_1,\tau_2)
\end{equation*}
which vanishes identically.
\end{proof}

\subsection{B\"{a}cklund transformation}
\label{sec:backl-transf}

Let us now remedy a lack in the analysis of the last section by providing
examples of dressing gauges.  For this, we follow Terng--Uhlenbeck
\cite{TerUhl00} and contemplate dressing by \emph{simple factors}.

Here is the basic building block of the construction: for $L^{+},
L^{-}$ null line subbundles of $\unC^{n+2}$ which are
\emph{complementary} in the sense that $L^{+}_x$ and $L^{-}_x$ are
not orthogonal, for all $x\in\Sigma$, define
$\Gamma^{L^{+}}_{L^{-}}:\C^{\times}\to\Gamma(\rO(\unC^{n+2}))$ by
\begin{equation*}
\Gamma^{L^{+}}_{L^{-}}(\lambda)=
\begin{cases}
\lambda&\text{on $L^{+}$;}\\
1&\text{on $(L^{+}\oplus L^{-})^{\perp}$;}\\
\lambda^{-1}&\text{on $L^{-}$.}
\end{cases}
\end{equation*}
The key property enjoyed by these gauge transformations is that
$\Ad\Gamma^{L^{+}}_{L^{-}}(\lambda)$ is semisimple with eigenvalues
$\lambda$, $1$ and $\lambda^{-1}$ only.

Our simple factors will be constructed by precomposing these gauge
transformations with a linear fractional transformation of $\lambda$
so the following simple lemma will be important for us:
\begin{lemma}
\label{th:7} Let $\lambda\mapsto d^{\lambda}$,
$\lambda\in\C^{\times}$, be a family of metric connections on
$\unC^{n+2}$, holomorphic in $\lambda$ on an open subset of $\P^1$.
Let $\alpha,\beta\in\P^1$ and $\psi_{\beta}^{\alpha}:\P^1\to\P^1$ a
linear fractional transformation with a zero at $\alpha$ and a pole
at $\beta$.  If $d^{\lambda}$ is holomorphic at $\alpha$, then the
gauged family of connections
\begin{equation*}
\Gamma^{L^{+}}_{L^{-}}(\psi^{\alpha}_{\beta}(\lambda))\cdot
d^{\lambda}
\end{equation*}
extend holomorphically across the singularity at $\alpha$ if and
only if $L^{+}$ is $d^{\alpha}$-parallel.  Similarly, if
$d^{\lambda}$ is holomorphic at $\beta$, the connections extend
holomorphically at $\beta$ if and only if $L^{-}$ is
$d^{\beta}$-parallel.
\end{lemma}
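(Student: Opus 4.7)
The plan is to compute the Laurent expansion of the gauged family $r(\lambda)\cdot d^{\lambda}$, with $r(\lambda):=\Gamma^{L^{+}}_{L^{-}}(\psi^{\alpha}_{\beta}(\lambda))$, in the local parameter $t=\psi^{\alpha}_{\beta}(\lambda)$ at $\alpha$ and read off when the singular part vanishes.

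I decompose $\unC^{n+2}=L^{+}\oplus L^{0}\oplus L^{-}$, where $L^{0}:=(L^{+}\oplus L^{-})^{\perp}$, and write $r(\lambda)=t\pi_{+}+\pi_{0}+t^{-1}\pi_{-}$, with $r(\lambda)^{-1}$ having the opposite powers of $t$. Expanding $d^{\lambda}=d^{\alpha}+tD^{(1)}+O(t^{2})$, as permitted by holomorphy at $\alpha$, the nine-term expansion of $r\circ d^{\lambda}\circ r^{-1}$ has only two potentially singular Laurent coefficients:
\begin{equation*}
C_{-2}=\pi_{-}d^{\alpha}\pi_{+},\qquad C_{-1}=\pi_{0}d^{\alpha}\pi_{+}+\pi_{-}d^{\alpha}\pi_{0}+\pi_{-}D^{(1)}\pi_{+},
\end{equation*}
and holomorphy at $\alpha$ is equivalent to $C_{-2}=C_{-1}=0$.

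The heart of the argument is to distinguish which of these obstructions are automatic and which encode $d^{\alpha}$-parallelism of $L^{+}$. Because $L^{+}$ is an isotropic line bundle and both $d^{\alpha}$ and $D^{(1)}$ act antisymmetrically with respect to $(\,,\,)$ (the first by metricity, the second as an $\fo$-valued $1$-form), we have $\pi_{-}d^{\alpha}\pi_{+}=0$ and $\pi_{-}D^{(1)}\pi_{+}=0$ automatically: if $T$ is antisymmetric and $\sigma\in\Gamma L^{+}$, then $(T\sigma,\sigma)=0$ and, since $L^{+}$ is $1$-dimensional, $(T\sigma,L^{+})=0$, which forces $\pi_{-}T\sigma=0$ because $\pi_{-}v$ is detected by pairing with $L^{+}$. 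The remaining two summands of $C_{-1}$ take values in the orthogonal subbundles $L^{0}$ and $L^{-}$, so $C_{-1}=0$ if and only if $\pi_{0}d^{\alpha}\pi_{+}=0$ and $\pi_{-}d^{\alpha}\pi_{0}=0$. Combined with the automatic $d^{\alpha}L^{+}\subset(L^{+})^{\perp}=L^{+}\oplus L^{0}$, the first is exactly $d^{\alpha}L^{+}\subset L^{+}$; metricity and $L^{0}\perp L^{+}$ then imply the second via $(d^{\alpha}\eta,\sigma)=-(\eta,d^{\alpha}\sigma)=0$ for $\eta\in\Gamma L^{0},\sigma\in\Gamma L^{+}$.

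The argument at $\beta$ is formally identical with $\mu=1/t$ as local parameter, interchanging the roles of $L^{+}$ and $L^{-}$ and yielding $d^{\beta}$-parallelism of $L^{-}$. The main obstacle is the bookkeeping of the Laurent coefficients and isolating the automatic cancellations from the genuine parallelism conditions; once that separation is made, the rest is elementary.
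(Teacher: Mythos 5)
Your proof is correct, and it reaches the conclusion by a slightly different route than the paper, so a comparison is worth recording. The paper never sees your $t^{-2}$ coefficient or the term $\pi_{-}D^{(1)}\pi_{+}$: it works inside $\fo(\unC^{n+2})$ and decomposes $d^{\alpha}=D+\beta^{+}+\beta^{-}$ according to the eigenbundles of $\Ad\Gamma^{L^{+}}_{L^{-}}$, whose eigenvalues are only $\psi^{\pm1}$ and $1$ precisely because $L^{\pm}$ are isotropic lines (so $\Wedge^{2}L^{\pm}=0$); then $\Gamma\cdot d^{\alpha}=D+\psi\beta^{+}+\psi^{-1}\beta^{-}$ has at worst a simple pole, the remainder $d^{\lambda}-d^{\alpha}=(\lambda-\alpha)B(\lambda)$ contributes holomorphically since $\Ad_{\Gamma(\psi(\lambda))}$ introduces at most a simple pole at $\alpha$, and holomorphy is immediately equivalent to $\beta^{-}=0$, i.e.\ $d^{\alpha}$-parallelism of $L^{+}$. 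Your argument works instead with endomorphism-valued Laurent coefficients in the local parameter $t=\psi(\lambda)$, so you must kill the a priori obstructions $\pi_{-}d^{\alpha}\pi_{+}$ and $\pi_{-}D^{(1)}\pi_{+}$ by hand; your isotropy-plus-skewness pairing argument does exactly this, and is the pointwise translation of the paper's statement that $\fo$ has no $\lambda^{\pm2}$-eigenspace. Likewise, your metricity computation tying $\pi_{-}d^{\alpha}\pi_{0}=0$ to $\pi_{0}d^{\alpha}\pi_{+}=0$ reproduces the fact that both components sit in the single piece $\beta^{-}\in\Omega^{1}(L^{-}\wedge W)$, so in the paper they vanish together automatically. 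What your version buys is elementarity and explicitness: it uses only metricity of the family and isotropy of $L^{+}$, and it makes visible why the naive double pole is vacuous; what the paper's version buys is economy, since the Lie-algebraic eigenvalue structure of the simple factor disposes of all spurious terms at once and is the same mechanism reused later (e.g.\ in Lemma~\ref{th:11} and Theorem~\ref{th:8}). Two small points of hygiene: the assertion that $d^{\alpha}$ ``acts antisymmetrically'' should be phrased via $(d^{\alpha}\sigma,\sigma)=\half d(\sigma,\sigma)=0$ for $\sigma\in\Gamma L^{+}$, since $d^{\alpha}$ is a connection rather than a tensor (the derivative $D^{(1)}$, being a difference quotient of metric connections, genuinely is $\fo$-valued); and when you split $C_{-1}=0$ into two conditions it is cleanest to evaluate on $\Gamma L^{+}$ and on $\Gamma L^{0}$ separately, rather than appealing only to the images lying in complementary bundles. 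Neither affects the validity of the argument, and your treatment of the point $\beta$ by swapping $L^{\pm}$ and using $1/t$ matches the paper's reduction via $\Gamma^{L^{+}}_{L^{-}}(\psi)=\Gamma^{L^{-}}_{L^{+}}(1/\psi)$.
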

\begin{proof}
Write $W=(L^{+}\oplus L^{-})^{\perp}$ so that we have a
decomposition $\unC^{n+2}=L^{-}\oplus W\oplus L^{+}$.  We have a
corresponding decomposition of $d^{\alpha}$:
\begin{equation*}
d^{\alpha}=D+\beta^{+}+\beta^{-}
\end{equation*}
where $D$ preserves $L^{\pm}$ and
$\beta^{\pm}\in\Omega^1(L^{\pm}\wedge W)$.  We note that
$L^{\pm}\wedge W$ are the $\lambda^{\pm 1}$-eigenbundles of $\Ad
\Gamma^{L^{+}}_{L^{-}}(\lambda)$.

Now write $d^{\lambda}=d^{\alpha}+(\lambda-\alpha)B(\lambda)$ (we
assume that $\alpha\neq\infty$, otherwise we work with $1/\lambda$).
Then, with $\Gamma(\lambda)=\Gamma^{L^{+}}_{L^{-}}(\psi(\lambda))$
\begin{equation*}
\Gamma(\lambda)\cdot d^{\lambda}= \Gamma(\lambda)\cdot d^{\alpha}+
(\lambda-\alpha)\Ad_{\Gamma(\lambda))}B(\lambda).
\end{equation*}
The second term is holomorphic near $\alpha$ since
$\Ad_{\Gamma(\lambda)}$ introduces at most a simple pole at
$\alpha$.  As for the first term,
\begin{align*}
\Gamma(\lambda)\cdot d^{\alpha}&=\Gamma(\lambda)\cdot
D+\Ad_{\Gamma(\lambda)}(\beta^{+}+\beta^{-})\\
&=D+\psi(\lambda)\beta^{+}+\psi(\lambda)^{-1}\beta^{-},
\end{align*}
which is holomorphic near $\alpha$ if and only if $\beta^{-}$
vanishes.  However, this is the case if and only if $L^{+}$ is
$d^{\alpha}$-parallel.

The case at $\beta$ now follows after noting that
$\Gamma^{L^{+}}_{L^{-}}(\psi^{\alpha}_{\beta}(\lambda))=
\Gamma^{L^{-}}_{L^{+}}(1/\psi^{\alpha}_{\beta}(\lambda))$ to which
the first case applies.
\end{proof}

Simple factors have only two poles so, in order to take care of both
the twisting and reality conditions\footnote{These conditions mean
that if $\alpha$ is a pole, so is $-\alpha$ and
$\pm1/\bar{\alpha}$.} that Definition~\ref{th:9} imposes, we shall
need a product of two simple factors
(c.f.~\cite{Pac10,KobIno05,Mah02}) and have recourse to a Bianchi
permutability result to account for the non-commutativity of that
product.  For this and related results, the following lemma  will be
useful.
\begin{lemma}
\label{th:11}
Let $\ell^{+},\ell^{-}$ and $\hat{\ell}^{+},\hat{\ell}^{-}$ be two
pairs of complementary null lines in $\C^{n+2}$. Let
$\psi^{\alpha}_{\beta}:\P^1\to\P^1$ be a linear fractional
transformation with zero at $\alpha$ and pole at $\beta$.  Finally,
let $E:\lambda\mapsto E(\lambda)\in\rO(\C^{n+2})$ be holomorphic
near $\alpha$ and $\beta$.  Then
\begin{equation}\label{eq:13}
\lambda\mapsto
\Gamma^{\hat{\ell}^{+}}_{\hat{\ell}_{-}}(\psi^{\alpha}_{\beta}(\lambda))E(\lambda)
\bigl(\Gamma^{\ell^{+}}_{\ell_{-}}(\psi^{\alpha}_{\beta}(\lambda))\bigr)^{-1}
\end{equation}
is holomorphic at $\alpha$ if and only if
$E(\alpha)\ell^{+}=\hat{\ell}^{+}$ and holomorphic at $\beta$ if and
only if $E(\beta)\ell^{-}=\hat{\ell}^{-}$.
\end{lemma}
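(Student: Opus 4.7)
By the symmetry $\mu\leftrightarrow\mu^{-1}$ (which interchanges $\alpha$ with $\beta$ and, since $\Gamma^{L^+}_{L^-}(\mu^{-1})=\Gamma^{L^-}_{L^+}(\mu)$, interchanges the roles of the $\pm$ lines in both factors), it suffices to treat holomorphy at $\alpha$. Set $\mu:=\psi^{\alpha}_{\beta}(\lambda)$ and decompose the ambient space in two ways, $\C^{n+2}=\ell^{+}\oplus W\oplus\ell^{-}$ and $\C^{n+2}=\hat\ell^{+}\oplus\hat W\oplus\hat\ell^{-}$, where $W:=(\ell^{+}\oplus\ell^{-})^{\perp}$ and $\hat W:=(\hat\ell^{+}\oplus\hat\ell^{-})^{\perp}$. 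Write $E(\lambda)$ as a $3\times3$ block matrix $E=(E_{ij})$ with $i,j\in\{+,0,-\}$, where $i$ indexes the target block and $j$ the source block.

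In these decompositions $\Gamma^{\ell^{+}}_{\ell^{-}}(\mu)=\mathrm{diag}(\mu,1,\mu^{-1})$ and similarly for the hatted factor, so conjugation as in \eqref{eq:13} rescales the block $E_{ij}$ by $\mu^{s(i)-s(j)}$, where $s(+)=1,\ s(0)=0,\ s(-)=-1$. Since $\psi^{\alpha}_{\beta}$ vanishes to first order at $\alpha$, write $E(\lambda)=E(\alpha)+\mu\,F(\lambda)$ with $F$ holomorphic at $\alpha$; in particular $F(\alpha)=E'(\alpha)/\psi'(\alpha)$. The only blocks with negative scaling exponent are $(-,+),(0,+),(-,0)$, with exponents $-2,-1,-1$, so the potentially singular contributions at $\mu=0$ are $\mu^{-2}E_{-+}(\alpha)$, $\mu^{-1}E_{0+}(\alpha)$, $\mu^{-1}E_{-0}(\alpha)$, together with the simple pole $\mu^{-1}F_{-+}(\alpha)$ arising from the $\mu F$ correction in position $(-,+)$. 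Since these sit in three distinct matrix positions, holomorphy at $\alpha$ holds if and only if $E_{-+}(\alpha)$, $E_{0+}(\alpha)$, $E_{-0}(\alpha)$ and $F_{-+}(\alpha)$ all vanish.

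It remains to match this quadruple vanishing to the single condition $E(\alpha)\ell^{+}=\hat\ell^{+}$. The conditions $E_{-+}(\alpha)=E_{0+}(\alpha)=0$ say exactly that $E(\alpha)\ell^{+}\subseteq\hat\ell^{+}$, hence equality holds by invertibility of $E(\alpha)$. Conversely, supposing $E(\alpha)\ell^{+}=\hat\ell^{+}$, the two ``column $+$'' conditions are immediate; since $E(\alpha)\in\rO(\C^{n+2})$ it maps $(\ell^{+})^{\perp}$ to $(\hat\ell^{+})^{\perp}=\hat\ell^{+}\oplus\hat W$ (using nullity of $\hat\ell^{+}$), and as $W\subseteq(\ell^{+})^{\perp}$ this forces $E_{-0}(\alpha)=0$. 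For the delicate last condition, write $E'(\alpha)=E(\alpha)A(\alpha)$ where $A:=E^{-1}E'$ takes values in $\fo(\C^{n+2})$ by differentiating the orthogonality relation; for any skew $A$ and any vector $v$ one has $(Av,v)=0$, and since $\ell^{+}$ is one-dimensional this gives $A(\alpha)\ell^{+}\subseteq(\ell^{+})^{\perp}$. Hence $E'(\alpha)\ell^{+}=E(\alpha)A(\alpha)\ell^{+}\subseteq(\hat\ell^{+})^{\perp}=\hat\ell^{+}\oplus\hat W$, whose $\hat\ell^{-}$-component is zero, yielding $F_{-+}(\alpha)=0$. The only subtle step is this last one, where orthogonality of $E$ is essential to control the derivative $E'(\alpha)$ and not merely the value $E(\alpha)$.
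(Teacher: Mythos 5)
Your proof is correct, but it takes a different route from the paper's. The paper disposes of this lemma in three lines: it precomposes \eqref{eq:13} with $(\psi^{\alpha}_{\beta})^{-1}$ to reduce to the case of a zero at $0$ and a pole at $\infty$, quotes Lemma~4.10 of \cite{Bur06} for holomorphicity at $0$, and gets the statement at $\infty$ by swapping $\ell^{\pm}$ and replacing $\lambda$ by $1/\lambda$ (your opening symmetry $\mu\leftrightarrow\mu^{-1}$ is exactly this last step). What you have done, in effect, is reprove the cited lemma from scratch: the block decomposition relative to $\ell^{+}\oplus W\oplus\ell^{-}$ and $\hat\ell^{+}\oplus\hat W\oplus\hat\ell^{-}$, the identification of the three singular blocks $(-,+),(0,+),(-,0)$ with the second-order condition in the corner block, and the use of orthogonality of $E$ --- both at the point (to get $E_{-0}(\alpha)=0$ from $E(\alpha)(\ell^{+})^{\perp}=(\hat\ell^{+})^{\perp}$) and to first order (skewness of $E^{-1}E'$ killing $F_{-+}(\alpha)$) --- to collapse the four block conditions to the single geometric condition $E(\alpha)\ell^{+}=\hat\ell^{+}$. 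Your version is self-contained and makes transparent exactly where orthogonality of $E$ is indispensable (the derivative condition would fail for a general $\rGL$-valued $E$), which the paper's citation hides; the paper's version is shorter and emphasises that the general $\alpha,\beta$ case is a formal reduction. One cosmetic point: since $\alpha,\beta\in\P^1$ may be $\infty$, the expansion $E(\lambda)=E(\alpha)+\mu F(\lambda)$ and the formula $F(\alpha)=E'(\alpha)/\psi'(\alpha)$ should be read in a local coordinate ($1/\lambda$ near $\infty$), as the paper does explicitly in the proof of Lemma~\ref{th:7}; the skewness argument is unaffected since $E^{-1}\,dE/d\zeta$ is $\fo$-valued for any local parameter $\zeta$.
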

\begin{proof}
Consider first the case where
$\psi^{\alpha}_{\beta}(\lambda)=\lambda$.  Here, holomorphicity of
\eqref{eq:13} at $0$ is precisely the statement of
\cite[Lemma~4.10]{Bur06} while holomorphicity at $\infty$ follows by
swapping the roles of $\ell^{\pm}$ and replacing $\lambda$ with
$1/\lambda$.

The case of arbitrary $\alpha,\beta$ can now be reduced to the first
case by precomposing \eqref{eq:13} with
$(\psi^{\alpha}_{\beta})^{-1}$.
\end{proof}
\begin{rem}
Lemma~\ref{th:11} can be viewed as a discrete analogue of
Lemma~\ref{th:7} and, indeed, for flat $d^{\lambda}$, implies
Lemma~\ref{th:7} by letting $E(\lambda)$ be a gauge transform
relating $d^{\lambda}$ and $d$.
\end{rem}

With all this in hand, let $V$ be a nondegenerate subbundle of
$\unC^{n+2}$, $\alpha\in\C^{\times}$ and $L$ a null, line subbundle
such that $L$ and $\rho_VL$ are complementary on an open set.  From
this data, define gauge transformations $p^{V}_{\alpha,L}(\lambda)$
on that open set by
\begin{equation*}
p^{V}_{\alpha,L}(\lambda)=\Gamma^{L}_{\rho_VL}\bigl(
\tfrac{(1+\alpha)(\lambda-\alpha)}{(1-\alpha)(\lambda+\alpha)}\bigr).
\end{equation*}
We observe:
\begin{subequations}
\label{eq:17}
\begin{gather}
\label{eq:20}
p^{V}_{\alpha,L}(1)=1,\\
\label{eq:18} p^V_{\alpha,L}(-\lambda)\circ\rho_V\circ
p^{V}_{\alpha,L}(\lambda)^{-1
}=
\rho_{V'},\\
\intertext{where $V'=p^V_{\alpha,L}(0)V=p^V_{\alpha,L}(\infty)V$,}
\label{eq:19}
p^{V}_{\alpha,L}(\lambda^{*})=\overline{p^{\overline{V}}_{\alpha^{*},\overline{L}}(\lambda)}.
\end{gather}
Moreover, $\lambda\mapsto p^{V}_{\alpha,L}(\lambda)$ is holomorphic
on $\P^1\setminus\{\pm\alpha\}$. We have
\begin{equation}\label{eq:11}
(p^{V}_{\alpha,L}(0))^{-1}p^{V}_{\alpha,L}(\infty)=\Gamma^L_{\rho_VL}(-1),
\end{equation}
the restriction of which to $V$ is reflection across
$(L\oplus\rho_VL)^{\perp}\cap V$ which has codimension $1$ in $V$
(otherwise $L=\rho_VL$) so that
\begin{equation}
\label{eq:22} \det\bigl(\Gamma^L_{\rho_VL}(-1)\bigr)=-1.
\end{equation}
\end{subequations}

We now have the main result of this section:
\begin{thm}
\label{th:8} Let $(V,d^{\lambda})$ be $k$-perturbed harmonic,
$\alpha\in\C^{\times}\setminus S^1$ and $L$ a $d^{\alpha}$-parallel
null line subbundle with $L,\rho_V L$ complementary.  Moreover, set
$L'=p^{V}_{\alpha,L}(\alpha^{*})\bar{L}$ and
$V'=p^V_{\alpha,L}(0)V$.  Assume also that $L',\rho_{V'}L'$ are
complementary.  Then $r:=p^{V'}_{\alpha^{*},L'}p^V_{\alpha,L}$ is
a dressing gauge for $(V,d^{\lambda})$.
\end{thm}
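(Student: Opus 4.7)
The plan is to verify in turn the four conditions of Definition~\ref{th:9} for $r=p^{V'}_{\alpha^{*},L'}p^{V}_{\alpha,L}$. The hypothesis $\alpha\in\C^{\times}\setminus S^{1}$ ensures that the four points $\pm\alpha,\pm\alpha^{*}$ are distinct and all lie in $\C^{\times}$, so $r$ is holomorphic on a neighbourhood of $\set{0,\infty}$ and $r(1)=1$ by two applications of \eqref{eq:20}. The $\rho$-twist identity is immediate from two applications of \eqref{eq:18}:
\begin{equation*}
r(-\lambda)\rho_{V}r(\lambda)^{-1}=p^{V'}_{\alpha^{*},L'}(-\lambda)\rho_{V'}p^{V'}_{\alpha^{*},L'}(\lambda)^{-1}=\rho_{V''},
\end{equation*}
where $\hV:=V''=p^{V'}_{\alpha^{*},L'}(0)V'$. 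For the determinant condition, \eqref{eq:11} applied to each factor gives
\begin{equation*}
r(0)^{-1}r(\infty)=p^{V}_{\alpha,L}(0)^{-1}\Gamma^{L'}_{\rho_{V'}L'}(-1)\,p^{V}_{\alpha,L}(0)\cdot\Gamma^{L}_{\rho_{V}L}(-1);
\end{equation*}
both factors on the right preserve $V$ and, by \eqref{eq:22}, restrict to $V$ with determinant $-1$ (the first because $p^{V}_{\alpha,L}(0)$ carries $V$ to $V'$ isometrically), so the product has determinant $+1$ on $V$ as required.

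The analytic core of the proof is to show that $r(\lambda)\cdot d^{\lambda}$ extends holomorphically across each of the four candidate poles $\pm\alpha,\pm\alpha^{*}$. Writing $r\cdot d^{\lambda}=p^{V'}_{\alpha^{*},L'}\cdot d^{\lambda}_{1}$ with $d^{\lambda}_{1}:=p^{V}_{\alpha,L}\cdot d^{\lambda}$, we address the four points in turn. At $\lambda=\alpha$, Lemma~\ref{th:7} applied to $p^{V}_{\alpha,L}$ yields holomorphicity of $d^{\lambda}_{1}$ since $L$ is $d^{\alpha}$-parallel by hypothesis, while $p^{V'}_{\alpha^{*},L'}(\lambda)$ is already holomorphic at $\alpha\neq\pm\alpha^{*}$. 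At $\lambda=-\alpha$, the $\rho_{V}$-twist $d^{-\alpha}=\rho_{V}\cdot d^{\alpha}$ shows $\rho_{V}L$ is $d^{-\alpha}$-parallel, so Lemma~\ref{th:7} applies again. At $\lambda=\alpha^{*}$, $p^{V}_{\alpha,L}$ is already holomorphic, so $d^{\lambda}_{1}$ inherits the holomorphicity of $d^{\lambda}$; Lemma~\ref{th:7} then requires $L'$ to be $d^{\alpha^{*}}_{1}$-parallel, and since $d^{\alpha^{*}}_{1}=p^{V}_{\alpha,L}(\alpha^{*})\cdot d^{\alpha^{*}}$ this is equivalent to $\bar{L}$ being $d^{\alpha^{*}}$-parallel, which follows from reality, $d^{\alpha^{*}}=\overline{d^{\alpha}}$. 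This is the raison d'\^{e}tre of the prescription $L'=p^{V}_{\alpha,L}(\alpha^{*})\bar{L}$. Finally, at $-\alpha^{*}$, a direct computation using \eqref{eq:18} shows that $d^{\lambda}_{1}$ is $\rho_{V'}$-twisted, so $\rho_{V'}L'$ is $d^{-\alpha^{*}}_{1}$-parallel and Lemma~\ref{th:7} closes the last case.

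The reality condition $r(\lambda^{*})=\overline{r(\lambda)}$ is where the two-factor structure is essential and is, we expect, the main obstacle. Using \eqref{eq:19} and the reality of $V$, a direct computation yields
\begin{equation*}
\overline{r(\lambda^{*})}=p^{\overline{V'}}_{\alpha,\overline{L'}}(\lambda)\,p^{V}_{\alpha^{*},\bar{L}}(\lambda),
\end{equation*}
so reality is equivalent to the Bianchi permutability identity
\begin{equation*}
p^{V'}_{\alpha^{*},L'}(\lambda)\,p^{V}_{\alpha,L}(\lambda)=p^{\overline{V'}}_{\alpha,\overline{L'}}(\lambda)\,p^{V}_{\alpha^{*},\bar{L}}(\lambda).
\end{equation*}
Our plan is to establish this by writing the ratio $E(\lambda)$ of the two sides in the shape \eqref{eq:13} and showing, via Lemma~\ref{th:11}, that its potential poles at $\pm\alpha,\pm\alpha^{*}$ are all removable. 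The null-line matching conditions demanded by Lemma~\ref{th:11} read precisely as the prescribed formulas $L'=p^{V}_{\alpha,L}(\alpha^{*})\bar{L}$ and, by conjugation, $\overline{L'}=p^{V}_{\alpha^{*},\bar{L}}(\alpha)L$, with the $-\alpha,-\alpha^{*}$ cases following by twisting. Hence $E$ is a holomorphic $\rO(\unC^{n+2})$-valued function on $\P^{1}$, thus constant, and evaluation at $\lambda=1$ via \eqref{eq:20} gives $E\equiv 1$, completing the verification.
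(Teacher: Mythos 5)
Your proof is correct and follows essentially the same route as the paper: holomorphic extension of $r(\lambda)\cdot d^{\lambda}$ across $\pm\alpha,\pm\alpha^{*}$ via Lemma~\ref{th:7} (using twisting and reality of $d^{\lambda}$ and the definition $L'=p^{V}_{\alpha,L}(\alpha^{*})\bar{L}$), the determinant condition from \eqref{eq:11} and \eqref{eq:22}, and reality by showing a product of four simple factors is holomorphic on $\P^{1}$ via Lemma~\ref{th:11}, hence constant and equal to $1$ at $\lambda=1$. The only cosmetic difference is that you phrase the reality step as a permutability identity and conjugate the $\Gamma^{L'}_{\rho_{V'}L'}(-1)$ factor rather than introducing $L''$, which changes nothing of substance.
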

Thus Theorem \ref{th:1} applies with $r$ so defined to give a
dressing transform of $V$ which, by Theorem~\ref{th:5}, induces a
transformation of constrained Willmore surfaces when $V$ is the
central sphere congruence of such a surface.

\begin{proof}
First $r$ is rational in $\lambda$ on $\P^1$ and holomorphic on
$\P^1\setminus\{\pm\alpha,\pm \alpha^{*}\}$.

Since $d^{\lambda}$ is $\rho_V$-twisted, we see that $\rho_VL$ is
$d^{-\alpha}$-parallel so that, by Lemma~\ref{th:7},
$p^V_{\alpha,L}(\lambda)\cdot d^{\lambda}$ extends from
$\C^{\times}\setminus\{\pm\alpha\}$ to a holomorphic family of
connections $\tilde{d}^{\lambda}$ on $\C^{\times}$.  By the reality
\eqref{eq:5} of $d^{\lambda}$, $\bar{L}$ is
$d^{\alpha^{*}}$-parallel so that $L'$ is
$\tilde{d}^{\alpha^{*}}=p^V_{\alpha,L}(\alpha^{*})\cdot
d^{\alpha^{*}}$-parallel\footnote{Here we use our hypothesis
that $\alpha\notin S^1$.}.  Moreover, we use \eqref{eq:18} to see
that $\tilde{d}^{\lambda}$ is $\rho_{V'}$-twisted so that
$\rho_{V'}L'$ is $\tilde{d}^{-\alpha^{*}}$-parallel.  We
therefore apply Lemma~\ref{th:7} again to see that
$\hd^{\lambda}=r(\lambda)\cdot
d^{\lambda}=p^{V'}_{\alpha^{*},\hat{L}}(\lambda)\cdot\tilde{d}^{\lambda}$
is holomorphic on $\C^{\times}$ with $\hd^1=d$ since $r(1)=1$.

Two applications of \eqref{eq:18} show that
\begin{align*}
r(-\lambda)\circ\rho_V\circ r(\lambda)^{-1}&=
p^{V'}_{\alpha^{*},L'}(-\lambda)\circ
(p^V_{\alpha,L}(-\lambda)\circ\rho_V\circ
p^V_{\alpha,L}(\lambda)^{-1})\circ
p^{V'}_{\alpha^{*},L'}(\lambda)^{-1}\\
&=p^{V'}_{\alpha^{*},L'}(-\lambda)\circ\rho_{V'}\circ
p^{V'}_{\alpha^{*},L'}(\lambda)^{-1}
\end{align*}
is independent of $\lambda$ so that $r$ satisfies \eqref{eq:7}.

Meanwhile, from \eqref{eq:11}, we have
\begin{align*}
r(0)^{-1}r(\infty)&=(p^{V'}_{\alpha^{*},L'}(0)p^{V}_{\alpha,L}(0))^{-1}
p^{V'}_{\alpha^{*},L'}(\infty)p^{V}_{\alpha,L}(\infty)\\
&=p^{V}_{\alpha,L}(0)^{-1}\Gamma^{L'}_{\rho_{V'}L'}(-1)p^{V}_{\alpha,L}(\infty)\\
&=\Gamma^{L''}_{\rho_{V}L''}(-1)\Gamma^{L}_{\rho_{V}L}(-1),
\end{align*}
where $L''=p^{V}_{\alpha,L}(0)^{-1}L'$.  Equation \eqref{eq:22} now
tells us that $\det(r(0)^{-1}r(\infty)_{|V})=1$.

We are therefore left to deal with the reality condition
\eqref{eq:8}.  For this, it suffices to prove that $\lambda\mapsto
\overline{r(\lambda^{*})}r(\lambda)^{-1}$ is holomorphic on
$\P^1$ and so independent of $\lambda$.  The conclusion then follows
since $r(1)=1$.  To stop the decorations piling up, let us
temporarily write $M=L'$, $U=V'$ and $\beta=\alpha^{*}$ so that,
using \eqref{eq:19},
\begin{equation}
\label{eq:23} \overline{r(1/\bar{\lambda})}r(\lambda)^{-1}=
p^{\bar{U}}_{\alpha,\bar{M}}(\lambda)p^V_{\beta,\bar{L}}(\lambda)
p^V_{\alpha,L}(\lambda)^{-1}p^U_{\beta,M}(\lambda)^{-1}.
\end{equation}
This is holomorphic in $\lambda$ on
$\P^1\setminus\{\pm\alpha,\pm\beta\}$.  However, Lemma~\ref{th:11}
tells us that $p^V_{\beta,\bar{L}}
p^V_{\alpha,L}{}^{-1}p^U_{\beta,M}{}^{-1}$ is holomorphic at $\beta$
since $\bar{L}=p^{V}_{\alpha,L}(\beta)^{-1}M$ and holomorphic at
$-\beta$ since
$\rho_{V}\bar{L}=p^{V}_{\alpha,L}(-\beta)^{-1}\rho_{V'}M$.  Since
$p^{\bar{U}}_{\alpha,\bar{M}}$ is also holomorphic at $\pm\beta$, we
see that $\overline{r(\lambda^{*})}r(\lambda)^{-1}$ is
holomorphic there also.

A similar argument with
$p^{\bar{U}}_{\alpha,\bar{M}}p^V_{\beta,\bar{L}}
p^V_{\alpha,L}{}^{-1}$ using $\bar{M}=p^V_{\beta,\bar{L}}(\alpha)L$
establishes holomorphicity at $\pm\alpha$ and we are done.
\end{proof}
\begin{rem}
To construct bundles $L$ that satisfy the hypotheses of
Theorem~\ref{th:8} it suffices to choose an initial condition $L_x$,
some $x\in\Sigma$, that satisfies the (open) complementarity
conditions and extend to an open set by parallel translation since
$d^{\alpha}$ is flat.  We observe that the complementarity conditions
are non-empty since they are satisfied by real, null $L_x$
transverse to $V_x$ and $V_x^{\perp}$.  Indeed, in this case,
$L'_{x}=p^V_{\alpha,L_{x}}(\alpha^{*})L_x=L_x$ and
$\rho_{V'_x}L'_x=\rho_VL_x$ since $L_x$ is an eigenspace of
$p^V_{\alpha,L_x}(\lambda)$, for all $\lambda$.
\end{rem}

By analogy with the Bianchi--B\"acklund transform of constant Gauss
curvature surfaces (c.f. \cite{Pac10}), we make the following
\begin{defn}\label{th:14}
The dressing transform by the dressing gauge $r$ of
Theorem~\ref{th:8} is the \emph{B\"acklund transform with parameters
$\alpha,L$}.
\end{defn}

\begin{rem}
In the classical literature, B\"acklund transforms are specified by
a choice of spectral parameter $\alpha$ and an initial condition
which locates the transformed surface in space at a base-point
$x\in\Sigma$.  In our setting, this last amounts to choosing the
fibre $L_x$ from which our bundle $L$ can be recovered by parallel
translation with respect to the flat connection $d^{\alpha}$.
\end{rem}

\subsection{Bianchi permutability for B\"acklund transforms}
\label{sec:bianchi-perm-backl}

Our terminology in Definition~\ref{th:14} may be justified by the
fact that there is a Bianchi permutability theorem available for our
B\"acklund transform.

Let us recall in outline the statement of Bianchi permutability: we
start with a $k$-perturbed harmonic $(V,d^{\lambda})$ and construct
two B\"acklund transforms $(V_1,d_1^{\lambda})$,
$(V_2,d_2^{\lambda})$ with parameters $\alpha_1,L_1$ and
$\alpha_2,L_2$ respectively.  The theorem then asserts the existence
of a fourth $k$-perturbed harmonic $(V_{12},d_{12})$ which is
simultaneously a B\"acklund transform of $V_1$ with parameter
$\alpha_2$ and of $V_2$ with parameter $\alpha_1$.

This amounts to finding the right line bundles $L_2^1$ and $L_1^2$
with $L_2^1$ $d_1^{\alpha_2}$-parallel and $L_1^2$
$d_2^{\alpha_1}$-parallel.  However, we have natural candidates
close at hand: by construction, we have gauge transformations
$r_1,r_2$ with $r_{j}(\lambda)\cdot d^{\lambda}=d_j^{\lambda}$ and
so
\begin{subequations}
\label{eq:27}
\begin{align}
\label{eq:28}
L_2^1&:=r_{1}(\alpha_2)L_2\\
\label{eq:29} L_{1}^2&:=r_{2}(\alpha_1)L_1
\end{align}
\end{subequations}
fit the bill.  It remains to verify that the B\"acklund
transformations of $V_j$ with parameters $\alpha_k,L^{j}_{k}$,
$j\neq k\in\{1,2\}$, coincide and it is to this that we now turn.

We begin with an effort to keep the notation under control: as
already indicated, we denote by $r_j$ the dressing gauge that
implements the B\"acklund transform of $(V,d^{\lambda})$ with
parameters $\alpha_j,L_j$.  Similarly, let $r_j^k$ be the dressing
gauge that implements the B\"acklund transform of $V_k$ with
parameters $\alpha_j,L_j^k$.  Thus, for example,
\begin{equation*}
r_1(\lambda)=p^{*}_{\alpha_1^{*},L'_1}p^{V}_{\alpha_1,L_1},
\end{equation*}
where $*$ denotes a bundle (in fact $p^{V}_{\alpha_1,L_1}(0)V$) that
we will not have to keep track of.  We have
$d_1^{\lambda}=r_{1}(\lambda)\cdot d^{\lambda}$, off a divisor,
$V_1=r_1(0)V=r_1(\infty)V$, and, in case that $V$ is the central
sphere congruence of a constrained Willmore $\Lambda$,
$\Lambda_1=r_1(0)\Lambda^{1,0}\cap r_1(\infty)\Lambda^{0,1}$.

The key fact is contained in the following lemma:
\begin{lemma}
\label{th:15} Suppose that $\alpha_2\notin\{\pm\alpha_1,\pm
\alpha_1^{*}\}$. Then
\begin{equation}
\label{eq:30} r^1_2r_1=r^2_1r_2.
\end{equation}
\end{lemma}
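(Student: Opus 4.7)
Set $E(\lambda) := r_2^1(\lambda)\, r_1(\lambda)\, r_2(\lambda)^{-1}\, r_1^2(\lambda)^{-1}$, a rational $\rO(\unC^{n+2})$-valued function on $\P^1$. Since each of the four constituent dressing gauges is a product of two simple factors, $E$ is rational with potential poles confined to the set $S := \{\pm\alpha_1, \pm\alpha_1^{*}, \pm\alpha_2, \pm\alpha_2^{*}\}$. Under the hypothesis of the lemma together with $\alpha_1,\alpha_2\notin S^1$ from Theorem~\ref{th:8}, these are eight distinct points. The strategy is to show that $E$ extends holomorphically across every point of $S$, hence is constant on $\P^1$, and that the constant is $1$: the latter is immediate from the normalisations $r_j(1) = r_j^k(1)=1$.

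For holomorphicity at $\pm\alpha_1$, expand each dressing gauge into its two simple-factor constituents. The only two factors in the expansion of $E$ with poles at $\pm\alpha_1$ are $p^{V}_{\alpha_1,L_1}$ inside $r_1$ and $(p^{V_2}_{\alpha_1,L_1^2})^{-1}$ inside $(r_1^2)^{-1}$; every other simple factor has its poles in $\{\pm\alpha_1^{*},\pm\alpha_2,\pm\alpha_2^{*}\}$ and is therefore holomorphic at $\pm\alpha_1$. Tracking the ordering, the factors appearing between these two singular ones collapse exactly to $r_2(\lambda)^{-1}$, and one obtains
\begin{equation*}
E(\lambda) = A(\lambda)\cdot p^{V}_{\alpha_1,L_1}(\lambda)\, r_2(\lambda)^{-1}\, \bigl(p^{V_2}_{\alpha_1,L_1^2}(\lambda)\bigr)^{-1}\cdot B(\lambda),
\end{equation*}
with $A,B$ holomorphic at $\pm\alpha_1$. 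The middle bracket has the exact shape treated in Lemma~\ref{th:11} with common linear fractional transformation $\psi^{\alpha_1}_{-\alpha_1}$, and its hypotheses become $r_2(\alpha_1)L_1 = L_1^2$ — which is the defining equation \eqref{eq:29} — and $r_2(-\alpha_1)\rho_VL_1 = \rho_{V_2}L_1^2$, which follows immediately from the first via the $\rho_V$-twisting $r_2(-\alpha_1)\rho_V = \rho_{V_2}r_2(\alpha_1)$. The completely symmetric rearrangement at $\pm\alpha_2$, whose middle bracket is $p^{V_1}_{\alpha_2,L_2^1}(\lambda)\, r_1(\lambda)\, (p^{V}_{\alpha_2,L_2}(\lambda))^{-1}$, yields the analogous conclusion using \eqref{eq:28}.

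The remaining four points $\pm\alpha_j^{*}$ are handled by reality: each of $r_1,r_2,r_1^2,r_2^1$ satisfies $r(\lambda^{*}) = \overline{r(\lambda)}$ by \eqref{eq:8}, so $E(\lambda^{*}) = \overline{E(\lambda)}$, and since $(\pm\alpha_j)^{*} = \pm\alpha_j^{*}$ holomorphicity transports from $\pm\alpha_j$ to $\pm\alpha_j^{*}$. Thus $E$ is holomorphic on $\P^1$ and hence constant; evaluation at $\lambda=1$ gives the constant $1$, as required. The substantive step is the clean separation of the two $\alpha_j$-singular factors by a manifestly holomorphic intertwiner ($r_2^{-1}$ at $\pm\alpha_1$, $r_1$ at $\pm\alpha_2$) in the expansion of $E$, which is what exposes the defining identities \eqref{eq:27} precisely as the hypotheses of Lemma~\ref{th:11}.
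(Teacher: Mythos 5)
Your proposal is correct and follows essentially the same route as the paper: form the quotient of the two sides (the paper's $\Pi$ is just the inverse of your $E$), use Lemma~\ref{th:11} together with \eqref{eq:27} to remove the singularities at the spectral parameters, and conclude by Liouville plus evaluation at $\lambda=1$. The only cosmetic difference is that you dispose of the poles at $-\alpha_j$ directly from the second condition of Lemma~\ref{th:11} via the twisting of $r_k$, whereas the paper treats $-\alpha_j$ and $\pm\alpha_j^{*}$ together using the twisting and reality symmetries of the whole product $\Pi$.
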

\begin{proof}
We shall argue as in the proof of Theorem~\ref{th:8} (itself in part
a permutability theorem for $p$'s) and show that
$\Pi:=r^2_1r_2(r_1)^{-1}(r^1_2)^{-1}$ is holomorphic on $\P^1$ and
so constant.  Then \eqref{eq:30} follows by evaluating at
$\lambda=1$.

First we consider holomorphicity at $\alpha_1$: we know that $r^1_2$
is holomorphic and invertible at $\alpha_1$ so the issue is with
$r^2_1r_2(r_1)^{-1}$ which reads
\begin{equation*}
p^{*}_{\alpha_1^{*},\hat{L_1^2}}
p^{V_2}_{\alpha_1,L_1^2}r_2p^V_{\alpha_1,L_1}{}^{-1}
p^{*}_{\alpha_1^{*},\hat{L_1}}{}^{-1}.
\end{equation*}
Here the outer $p^{*}$-terms are holomorphic at $\alpha_1$ (recall
that $\alpha_1\notin S^2$!) and what is left is holomorphic thanks
to Lemma~\ref{th:11} and \eqref{eq:29}.  Thus our product $\Pi$ is
holomorphic and $\rO(\unC^{n+2})$-valued at $\alpha_1$.  The same
argument, with the roles of $\alpha_1$ and $\alpha_2$ swopped shows
that $r^1_2r_1(r_2)^{-1}$ and so $r_2(r_1)^{-1}(r^1_2)^{-1}$ is
holomorphic and invertible at $\alpha_2$.

For the remaining potential singularities at
$-\alpha_j,\pm\alpha_j^{*}$, we observe that, first, $\Pi$
is real: $\overline{\Pi(\lambda)}=\Pi(\lambda^{*})$ and then
that, with $\rho_1^2$ being the reflection across $\Pi_1^2(0)V_2$
and similarly for $\rho_2^1$, we have
\begin{equation*}
\rho_2^1\Pi(\lambda)=\Pi(-\lambda)\rho_1^2.
\end{equation*}
Consequently, holomorphicity at $-\alpha_j,\pm\alpha_j^{*}$
follows from that at $\alpha_j$ and $\Pi$ is holomorphic on $\P^1$
as required.
\end{proof}

With this in hand, the permutability theorem follows quickly: with
$r$ the common value in \eqref{eq:30}, set
$V_{12}=r(0)V=r(\infty)V$, $d^{\lambda}_{12}=r(\lambda)\cdot
d^{\lambda}$ and, in case we are working with a constrained Willmore
surface $\Lambda_{12}=r(0)\Lambda^{1,0}\cap r(\infty)\Lambda^{0,1}$.
Then
\begin{align*}
V_{12}&=r_2^1r_1(0)V=r_2^1(0)V_1 \\
d^{\lambda}_{12}&=r_2^1(\lambda)\cdot(r_1(\lambda)\cdot
d^{\lambda})=r_2^1(\lambda)\cdot d_1^{\lambda}\\
\Lambda_{12}&=\bigl(r_2^1r_1(0)\Lambda^{1,0}\bigr)\cap
\bigl(r_2^1r_1(\infty)\Lambda^{0,1}\bigr)=
r_2^1(0)\Lambda_{1}^{1,0}\cap r_2^1(\infty)\Lambda_{1}^{0,1},
\end{align*}
so that $(V_{12},d_{12}^{\lambda})$ (respectively $\Lambda_{12}$) is
the B\"acklund transform of $(V_1,d_1^{\lambda})$ (respectively
$\Lambda_1$) with parameters $\alpha_2,L_2^1$.  Similarly, this data
is the B\"acklund transform of $(V_2,d_2^{\lambda})$ (respectively
$\Lambda_2$) with parameters $\alpha_1,L_1^2$.  To summarise:
\begin{thm}
\label{th:16} Let $(V,d^{\lambda})$ be $k$-perturbed harmonic and
$\alpha_1,\alpha_2\in\C^{\times}\setminus S^1$ with
$\alpha_2\notin\{\pm\alpha_1,\pm \alpha_1^{*}\}$.  Let
$(V_1,d_1^{\lambda})$, $(V_2,d_2^{\lambda})$ be the B\"acklund
transforms of $(V,d^{\lambda})$ with parameters $\alpha_1,L_1$ and
$\alpha_2,L_2$ respectively.  Then, with $L_j^k$ defined by
\eqref{eq:27}, the B\"acklund transform of $(V_1,d_1^{\lambda})$
with parameters $\alpha_2,L^1_2$ coincides with the B\"acklund
transform of $(V_2,d_2^{\lambda})$ with parameters $\alpha_1,L_1^2$.

Moreover, if $V$ is the central sphere congruence of a constrained
Willmore surface $\Lambda$, the corresponding iterated transforms of
$\Lambda$ coincide also.
\end{thm}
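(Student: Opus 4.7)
The plan is to produce a single dressing gauge $r$ that simultaneously implements both iterated B\"acklund transforms. The natural candidates are the two compositions $r_2^1 r_1$ and $r_1^2 r_2$, where $r_j^k$ denotes the dressing gauge effecting the B\"acklund transform of $V_k$ with parameters $\alpha_j, L_j^k$. The whole result therefore reduces to establishing the operator identity $r_2^1 r_1 = r_1^2 r_2$ (Lemma~\ref{th:15}).

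To prove this identity I would mimic the strategy used in the proof of Theorem~\ref{th:8}: form the product
\begin{equation*}
\Pi(\lambda) := r_1^2(\lambda)\, r_2(\lambda)\, r_1(\lambda)^{-1}\, r_2^1(\lambda)^{-1}
\end{equation*}
and show that it extends to a holomorphic $\rO(\unC^{n+2})$-valued map on all of $\P^{1}$, hence is constant; evaluation at $\lambda=1$ then gives the identity since every $r$ here is the identity at $1$. A priori, $\Pi$ is rational with poles only at $\pm\alpha_1,\pm\alpha_1^{*},\pm\alpha_2,\pm\alpha_2^{*}$. At $\lambda = \alpha_1$, the hypothesis $\alpha_1\notin S^1$ makes the outer simple factor of $r_2^1$ holomorphic, reducing the question to holomorphicity of the middle $p$-product; this is exactly the content of Lemma~\ref{th:11}, whose hypothesis is met precisely because $L_1^2 = r_2(\alpha_1) L_1$ by \eqref{eq:29}. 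The point $-\alpha_1$ is handled analogously using the twisting property \eqref{eq:18} applied to $\rho_{V_2} L_1^2$. Swapping the roles of the indices takes care of $\pm\alpha_2$. Finally, the reality property \eqref{eq:19} of $p^{V}_{\alpha, L}$ forces $\overline{\Pi(\lambda)} = \Pi(\lambda^{*})$ together with a twisting-type relation between $\Pi(\lambda)$ and $\Pi(-\lambda)$, so holomorphicity at $\pm\alpha_j^{*}$ is inherited from that at $\pm\alpha_j$.

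Granted the identity, set $r := r_2^1 r_1 = r_1^2 r_2$. Then
\begin{equation*}
V_{12} := r(0)V = r_2^1(0) V_1 = r_1^2(0) V_2,\qquad d_{12}^{\lambda} := r(\lambda)\cdot d^{\lambda} = r_2^1(\lambda)\cdot d_1^{\lambda} = r_1^2(\lambda)\cdot d_2^{\lambda},
\end{equation*}
so $(V_{12},d_{12}^{\lambda})$ is at once the B\"acklund transform of $(V_1,d_1^{\lambda})$ with parameters $\alpha_2,L_2^1$ and of $(V_2,d_2^{\lambda})$ with parameters $\alpha_1,L_1^2$. For the constrained Willmore statement, the formula $\hL = r(0)\Lambda^{1,0}\cap r(\infty)\Lambda^{0,1}$ of Theorem~\ref{th:5}, combined with $r_2^1 r_1 = r_1^2 r_2$ evaluated at $0$ and $\infty$, yields coincidence of the two iterated transforms of $\Lambda$.

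The main obstacle is the holomorphicity of $\Pi$ at $\pm\alpha_1$ and $\pm\alpha_2$; everything else is a formal consequence of reality and twisting. The key observation that makes this go through is that the very definition \eqref{eq:27} of $L_j^k$ as $r_k(\alpha_j) L_j$ is engineered precisely to satisfy the hypothesis $E(\alpha)\ell^{+} = \hat{\ell}^{+}$ of Lemma~\ref{th:11}, so the permutability theorem is, in effect, the assertion that this natural choice of initial data for the iterated transforms is consistent.
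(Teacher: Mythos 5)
Your proposal is correct and follows essentially the same route as the paper: it reduces everything to the identity $r_2^1r_1=r_1^2r_2$ (the paper's Lemma~\ref{th:15}), proved by showing the quotient $\Pi$ is holomorphic on $\P^1$ via Lemma~\ref{th:11} together with the defining relations \eqref{eq:27}, plus reality and twisting to handle the remaining poles, and then deduces the statement for $V_{12}$, $d_{12}^{\lambda}$ and $\Lambda_{12}$ exactly as in the text. One small slip of attribution: at $\lambda=\alpha_1$ the hypothesis $\alpha_1\notin S^1$ is what makes the \emph{outer} $p$-factors of $r_1^2$ and $r_1^{-1}$ (those with parameter $\alpha_1^{*}$) holomorphic, while $r_2^1$ and $r_2$ are holomorphic there because $\alpha_2\notin\{\pm\alpha_1,\pm\alpha_1^{*}\}$; this does not affect the argument.
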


\subsection{Spectral Deformation vs Dressing Transform}
\label{sec:spectr-dress} We conclude our analysis by comparing the
spectral and dressing transformations.  In an appropriate sense,
these commute.  For this, begin with a $k$-perturbed harmonic map
$(V,d^{\lambda})$.  Recall from
section~\ref{sec:spectral-deformation} that, for $\mu\in S^1$, we
have $k$-perturbed harmonic maps $(V_{\mu},d_{\mu}^{\lambda})$ given
by $V_{\mu}=\Phi_{\mu}V$ and $d_{\mu}^{\lambda}=\Phi_{\mu}\cdot
d^{\lambda\mu}$ where
$\Phi_{\mu}\in\Gamma\rO(\underline{\R}^{n+1,1})$ solves
$\Phi_{\mu}\cdot d_{\mu}=d$.  Now let $(\hV,\hd^{\lambda})$ be a
dressing transform of $(V,d^{\lambda})$ via a dressing gauge $r$.
Then $(\hV,\hd^{\lambda})$ has a spectral deformation
$(\hV_{\mu},\hd_{\mu}^{\lambda})$ given by
$\hV_{\mu}=\hat{\Phi}_{\mu}\hV$,
$\hd_{\mu}^{\lambda}=\hat{\Phi}_{\mu}\cdot\hd^{\lambda\mu}$, where
$\hat{\Phi}_{\mu}$ solves $\hat{\Phi}_{\mu}\cdot\hd^{\mu}=d$.

For $\lambda\in\dom(r)\setminus\set{0,\infty}$, we have
$\hd^{\lambda}=r(\lambda)\cdot d^{\lambda}$ so that, defining
$r_{\mu}(\lambda):=\hat{\Phi}_{\mu}r(\lambda\mu)(\Phi_{\mu})^{-1}$,
we have
\begin{align*}
\hd_{\mu}^{\lambda}&=r_{\mu}(\lambda)\cdot
d_{\mu}^{\lambda},\quad\lambda\in\dom(r_{\mu})\setminus\set{0,\infty},\\
\hV_{\mu}&=r_{\mu}(0)V_{\mu}.
\end{align*}
It is now a simple matter to check that $r_{\mu}$ is a dressing
gauge for $V_{\mu}$ and we conclude that $\hV_{\mu}$ is a dressing
transform of $V_{\mu}$.  Thus:
\begin{prop}
Let $(\hV,\hd^{\lambda})$ be a dressing transform of a $k$-perturbed
harmonic $(V,d^{\lambda})$.  For $\mu\in S^1$, the spectral
deformation $(\hV_{\mu},\hd_{\mu}^{\lambda})$ of
$(\hV,\hd^{\lambda})$ is a dressing transform of the spectral
deformation $(V_{\mu},d_{\mu}^{\lambda})$ of $(V,d^{\lambda})$.
\end{prop}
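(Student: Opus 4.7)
The plan is to verify the recipe already indicated just before the statement: set
$r_{\mu}(\lambda):=\hat{\Phi}_{\mu}r(\lambda\mu)(\Phi_{\mu})^{-1}$
and check the four axioms of Definition~\ref{th:9}. None of this is genuinely difficult, but the proof is essentially a bookkeeping exercise that uses in an essential way that $\mu\in S^{1}$ (so $\mu^{*}=\mu$) and that $\Phi_{\mu}$, $\hat{\Phi}_{\mu}$ are real orthogonal gauges.

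First I would record the gauge identity: for $\lambda\in\dom(r_{\mu})\setminus\{0,\infty\}$,
\begin{equation*}
r_{\mu}(\lambda)\cdot d_{\mu}^{\lambda}=\hat{\Phi}_{\mu}r(\lambda\mu)\cdot(d^{\lambda\mu})=\hat{\Phi}_{\mu}\cdot\hd^{\lambda\mu}=\hd_{\mu}^{\lambda},
\end{equation*}
so that item (\ref{item:3}) of Definition~\ref{th:9} is automatic once we note $\hd_{\mu}^{1}=\hat{\Phi}_{\mu}\cdot\hd^{\mu}=d$ by definition of $\hat{\Phi}_{\mu}$, and that $\hd_{\mu}^{\lambda}$ extends holomorphically to $\C^{\times}$ since $(\hV,\hd^{\lambda})$ is $k$-perturbed harmonic. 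Holomorphicity of $r_{\mu}$ near $0$ and $\infty$ is immediate from that of $r$ since $\Phi_{\mu},\hat{\Phi}_{\mu}$ are independent of $\lambda$.

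Next I would check the twisting and reality conditions. Using $\rho_{V_{\mu}}=\Phi_{\mu}\rho_{V}\Phi_{\mu}^{-1}$ and, analogously, $\rho_{\hV_{\mu}}=\hat{\Phi}_{\mu}\rho_{\hV}\hat{\Phi}_{\mu}^{-1}$, a short computation gives
\begin{equation*}
r_{\mu}(-\lambda)\circ\rho_{V_{\mu}}\circ r_{\mu}(\lambda)^{-1}=\hat{\Phi}_{\mu}\bigl(r(-\lambda\mu)\circ\rho_{V}\circ r(\lambda\mu)^{-1}\bigr)\hat{\Phi}_{\mu}^{-1}=\rho_{\hV_{\mu}},
\end{equation*}
which is independent of $\lambda$; this uses that $r$ already satisfies \eqref{eq:7}. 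For reality, $\mu\in S^{1}$ gives $(\lambda\mu)^{*}=\lambda^{*}\mu$, so $r(\lambda^{*}\mu)=\overline{r(\lambda\mu)}$ by \eqref{eq:8}; combined with $\overline{\Phi_{\mu}}=\Phi_{\mu}$ and $\overline{\hat{\Phi}_{\mu}}=\hat{\Phi}_{\mu}$, this yields $r_{\mu}(\lambda^{*})=\overline{r_{\mu}(\lambda)}$.

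For the determinant axiom (\ref{item:4}), I would observe that
\begin{equation*}
r_{\mu}(0)^{-1}r_{\mu}(\infty)=\Phi_{\mu}\bigl(r(0)^{-1}r(\infty)\bigr)\Phi_{\mu}^{-1},
\end{equation*}
which, restricted to $V_{\mu}=\Phi_{\mu}V$, is conjugate to $r(0)^{-1}r(\infty)_{|V}$ and so has determinant $1$ by \eqref{eq:31}. Finally, the identification $\hV_{\mu}=r_{\mu}(0)V_{\mu}$ is immediate:
\begin{equation*}
r_{\mu}(0)V_{\mu}=\hat{\Phi}_{\mu}r(0)\Phi_{\mu}^{-1}\Phi_{\mu}V=\hat{\Phi}_{\mu}\hV=\hV_{\mu}.
\end{equation*}
Thus $r_{\mu}$ is a dressing gauge for $(V_{\mu},d_{\mu}^{\lambda})$ whose associated dressing transform is $(\hV_{\mu},\hd_{\mu}^{\lambda})$, proving the proposition. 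The only place where care is needed is the reality axiom, which is precisely where $\mu\in S^{1}$ enters; everywhere else the argument is formal.
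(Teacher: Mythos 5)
Your proposal is correct and follows exactly the paper's route: the paper defines the same gauge $r_{\mu}(\lambda)=\hat{\Phi}_{\mu}r(\lambda\mu)\Phi_{\mu}^{-1}$, notes $\hd_{\mu}^{\lambda}=r_{\mu}(\lambda)\cdot d_{\mu}^{\lambda}$ and $\hV_{\mu}=r_{\mu}(0)V_{\mu}$, and leaves the verification of Definition~\ref{th:9} as "a simple matter", which is precisely the bookkeeping you carry out (correctly, including the use of $\mu^{*}=\mu$ for the reality axiom and the reality of $\Phi_{\mu},\hat{\Phi}_{\mu}$).
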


In case that $(\hV,\hd^{\lambda})$ is a B\"acklund transform of
$(V,d^{\lambda})$, we can do a little better.  First observe that, in
this case, $\mu\in\dom(r)\setminus\set{0,\infty}$ so that
$\hd^{\mu}=r(\mu)\cdot d^{\mu}=(r(\mu)\Phi_{\mu}^{-1})\cdot d$ and we
may therefore take $\hat{\Phi}_{\mu}=\Phi_{\mu}r(\mu)^{-1}$.
Further, suppose that the B\"acklund transformation of
$(V,d^{\lambda})$ has parameters $\alpha$, $L$ so that $L$ is a
$d^{\alpha}$-parallel, null line subbundle.  Then $\Phi_{\mu}L$ is
$d_{\mu}^{\mu/\alpha}$-parallel which prompts the following result:
\begin{prop}
Let $(\hV,\hd^{\lambda})$ be the B\"acklund transform of
$(V,d^{\lambda})$ with parameters $\alpha$, $L$.  For $\mu\in S^1$,
the spectral deformation $(\hV_{\mu},\hd_{\mu}^{\lambda})$ is the
B\"acklund transform with parameters $\alpha/\mu$, $\Phi_{\mu}L$ of
the spectral deformation $(V_{\mu},d_{\mu}^{\lambda})$.
\end{prop}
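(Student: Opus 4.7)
The plan is to leverage the preceding proposition, which already identifies $(\hV_\mu,\hd_\mu^{\lambda})$ as the dressing transform of $(V_\mu,d_\mu^{\lambda})$ via the gauge $r_\mu(\lambda)=\Phi_\mu r(\mu)^{-1}r(\lambda\mu)\Phi_\mu^{-1}$. My task is therefore to exhibit $r_\mu$ as the B\"acklund dressing gauge of Theorem~\ref{th:8} attached to the parameters $\alpha/\mu$, $\Phi_\mu L$.

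First I would verify the hypotheses of Theorem~\ref{th:8} at these parameters. Because $d_\mu^{\lambda}=\Phi_\mu\cdot d^{\lambda\mu}$, $d_\mu^{\alpha/\mu}$-parallel sections are exactly images under $\Phi_\mu$ of $d^{\alpha}$-parallel sections, so $\Phi_\mu L$ is parallel; the complementarity of $\Phi_\mu L$ with $\rho_{V_\mu}\Phi_\mu L=\Phi_\mu\rho_V L$, and of the corresponding complex-conjugate pair, follows from that of $L$ by invertibility and reality of $\Phi_\mu$. The core computation rests on the obvious covariance
\begin{equation*}
\Phi_\mu\,\Gamma^{L}_{\rho_V L}(\phi)\,\Phi_\mu^{-1}
=\Gamma^{\Phi_\mu L}_{\rho_{V_\mu}\Phi_\mu L}(\phi),
\end{equation*}
valid for any scalar $\phi$. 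Applied to $p^{V}_{\alpha,L}(\lambda\mu)$, this produces a $\Gamma$-factor whose scalar linear fractional argument has zero at $\lambda=\alpha/\mu$ and pole at $\lambda=-\alpha/\mu$, matching the zero/pole data of the scalar argument of $p^{V_\mu}_{\alpha/\mu,\Phi_\mu L}(\lambda)$. Two such LFTs differ only by a multiplicative constant, and multiplicativity of $\Gamma^{\Phi_\mu L}_{\rho_{V_\mu}\Phi_\mu L}$ in its scalar argument then gives
\begin{equation*}
\Phi_\mu\,p^{V}_{\alpha,L}(\lambda\mu)\,\Phi_\mu^{-1}
=C_1\cdot p^{V_\mu}_{\alpha/\mu,\Phi_\mu L}(\lambda),
\end{equation*}
for a constant $C_1\in\rO(\unC^{n+2})$. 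An analogous computation for the outer factor $p^{V'}_{\alpha^{*},L'}$, after tracing through the recipe $L'=p^{V}_{\alpha,L}(\alpha^{*})\bar L$ and $V'=p^{V}_{\alpha,L}(0)V$ and its analogue on the $V_\mu$-side to see that $\Phi_\mu L'$ and $\Phi_\mu V'$ differ from the canonical bundles $\tilde L'$, $\tilde V'$ only by the intertwiner $C_1$, expresses $\Phi_\mu p^{V'}_{\alpha^{*},L'}(\lambda\mu)\Phi_\mu^{-1}$ in the form $C_2\cdot p^{\tilde V'}_{\alpha^{*}/\mu,\tilde L'}(\lambda)\cdot C_1^{-1}$ with $C_2$ constant.

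Combining the two factors, $r_\mu(\lambda)$ equals the candidate B\"acklund dressing gauge $\tilde r(\lambda):=p^{\tilde V'}_{\alpha^{*}/\mu,\tilde L'}(\lambda)\,p^{V_\mu}_{\alpha/\mu,\Phi_\mu L}(\lambda)$ up to left multiplication by a single $\lambda$-independent element of $\rO(\R^{n+1,1})$. Since both $r_\mu$ and $\tilde r$ evaluate to the identity at $\lambda=1$, that constant is forced to be the identity, so $r_\mu=\tilde r$ and the desired identification of transforms follows. The main obstacle is the bookkeeping alluded to above: one must chase the bundles $V'$, $L'$ through conjugation by $\Phi_\mu$ and the intertwiner $C_1$, and check that the constants collected from the two $p$-factors, together with the corrective prefactor $\Phi_\mu r(\mu)^{-1}\Phi_\mu^{-1}$, collapse into a single $\lambda$-independent element that the normalisation at $\lambda=1$ then kills.
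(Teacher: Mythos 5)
Your argument is sound and it targets exactly the identity the paper reduces to, namely $\Phi_{\mu}r(\mu)^{-1}r(\lambda\mu)\Phi_{\mu}^{-1}=p^{V_{\mu}'}_{\alpha^{*}/\mu,L_{\mu}'}(\lambda)\,p^{V_{\mu}}_{\alpha/\mu,\Phi_{\mu}L}(\lambda)$, but you prove it by a different mechanism. The paper runs its ``familiar permutability argument'': move all factors to one side, note the product is holomorphic in $\lambda$ except possibly at $\pm\alpha/\mu,\pm\alpha^{*}/\mu$, check holomorphicity at $\pm\alpha/\mu$ by the explicit reduction of $\Gamma^L_{\rho_VL}\bigl(\tfrac{\lambda\mu-\alpha}{\lambda\mu+\alpha}\bigr)\Phi_{\mu}^{-1}\Gamma^{\Phi_{\mu}L}_{\rho_{V_{\mu}}\Phi_{\mu}L}\bigl(\tfrac{\lambda-\alpha/\mu}{\lambda+\alpha/\mu}\bigr)^{-1}$ to $\Phi_{\mu}^{-1}$, handle $\pm\alpha^{*}/\mu$ by reality, and conclude by Liouville plus evaluation at $\lambda=1$. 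You instead compute both simple factors head-on: the covariance $\Phi_{\mu}\Gamma^{L}_{\rho_VL}(\phi)\Phi_{\mu}^{-1}=\Gamma^{\Phi_{\mu}L}_{\rho_{V_{\mu}}\Phi_{\mu}L}(\phi)$ and multiplicativity of $\Gamma$ in its scalar argument give $\Phi_{\mu}p^{V}_{\alpha,L}(\lambda\mu)\Phi_{\mu}^{-1}=C_1\,p^{V_{\mu}}_{\alpha/\mu,\Phi_{\mu}L}(\lambda)$ with $C_1=\Gamma^{\Phi_{\mu}L}_{\rho_{V_{\mu}}\Phi_{\mu}L}\bigl(\tfrac{(1+\alpha)(1-\alpha/\mu)}{(1-\alpha)(1+\alpha/\mu)}\bigr)$, and evaluating this very relation at $\lambda=0$ and at $\lambda=\alpha^{*}/\mu$ (note $(\alpha/\mu)^{*}=\alpha^{*}/\mu$ since $\mu\in S^1$) delivers precisely the bookkeeping you defer: $\tilde{V}'=C_1^{-1}\Phi_{\mu}V'$ and $\tilde{L}'=C_1^{-1}\Phi_{\mu}L'$, hence $\rho_{\tilde{V}'}\tilde{L}'=C_1^{-1}\Phi_{\mu}\rho_{V'}L'$, so the second complementarity hypothesis of Theorem~\ref{th:8} for the new parameters transfers by orthogonality of $C_1^{-1}\Phi_{\mu}$ (this is the point where your opening remark that complementarity follows ``from invertibility and reality of $\Phi_{\mu}$'' really needs $C_1$), and the outer factor then comes out as $C_2\,p^{\tilde{V}'}_{\alpha^{*}/\mu,\tilde{L}'}(\lambda)\,C_1^{-1}$ as you assert; the $\lambda$-independent leftovers (which a priori lie only in $\Gamma\rO(\unC^{n+2})$, not $\rO(\R^{n+1,1})$ -- harmless) are indeed killed by $r_{\mu}(1)=1$. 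What each route buys: the paper's is shorter and never has to identify $L_{\mu}'$, $V_{\mu}'$ with the transported bundles, since reality and holomorphicity dispose of the outer factor wholesale; yours dispenses with the Liouville step, makes all constants explicit, and verifies along the way that the parameters $\alpha/\mu$, $\Phi_{\mu}L$ genuinely satisfy the hypotheses of Theorem~\ref{th:8}, a point the paper leaves implicit.
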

\begin{proof}
In this case, we have
$r(\lambda)=p_{\alpha^{*},L'}^{V'}(\lambda)p_{\alpha,L}^V(\lambda)$.
We set $L_{\mu}=\Phi_{\mu}L$.  Then, in view of the discussion
above, our result amounts to the identity
\begin{equation*}
\Phi_{\mu}r(\mu)^{-1}r(\lambda\mu)\Phi_{\mu}^{-1}=
p^{V_{\mu}'}_{\alpha^{*}/\mu,L_{\mu}'}(\lambda)p^{V_{\mu}}_{\alpha/\mu,L_{\mu}}(\lambda).
\end{equation*}
For this, we use the, by now familiar, permutability argument.
Rearranging all terms onto the left hand side yields a product which
is holomorphic in $\lambda$ except possibly at $\pm\alpha/\mu$ and
$\pm\alpha^{*}/\mu$.  The part with possible singularities at
$\pm\alpha/\mu$ is
\begin{equation*}
\Gamma^L_{\rho_VL}(\tfrac{\lambda\mu-\alpha}{\lambda\mu+\alpha})\Phi_{\mu}^{-1}
(\Gamma^{L_{\mu}}_{\rho_{V_{\mu}}L_{\mu}}(\tfrac{\lambda-\alpha/\mu}{\lambda+\alpha/\mu}))^{-1}
\end{equation*}
which is readily seen to reduce to $\Phi_{\mu}^{-1}$ and so is
independent of $\lambda$.  The reality condition now establishes
holomorphicity at $\pm\alpha^{*}/\mu$ so that the whole product
is independent of $\lambda$ and we are done.
\end{proof}
In particular, we see that any B\"acklund transform can be obtained
as a combination of spectral deformations and a B\"acklund transform
with $\alpha\in\R$.

\section{Codimension $2$}
\label{sec:codimension-2}

Conformal surface geometry in $S^4$ has a distinctive flavour of its
own.  This manifests itself in various ways such as a well-developed
twistor theory \cite{Bry82,EelSal85,Fri88} and the quaternionic
formalism of the Berlin school
\cite{BurFerLesPed02,FerLesPedPin01,Boh10}.  However, for us, the key
feature of this setting is that the space of oriented $2$-spheres is
a complex manifold (in fact, a pseudo-Hermitian symmetric space).  We
will show that this structure allows us both to simplify the
preceding theory and to relate it to a construction of Willmore
surfaces in \cite{BurFerLesPed02}.

\subsection{Additional structure in codimension $2$}
\label{sec:compl-struct-space}

We consider maps of the Riemann surface $\Sigma$ into the space
$\tilde{G}_{3,1}(\R^{5,1})$ of \emph{oriented} $2$-spheres in $S^4$,
or, equivalently, oriented bundles $V$ of $(3,1)$-planes in
$\unC^6$.  Note that if $V$ is the central sphere congruence of a
conformal immersion $\Lambda$, then $\sigma\wedge d_X\sigma\wedge
d_{J^{\Sigma}X}\sigma\wedge \Delta\sigma$, $X\in T\Sigma$,
$\sigma\in\Gamma\Lambda$, gives an orientation of $V$ which is
independent of choices.  

In this setting, $V^{\perp}$ is a real bundle of oriented $(2,0)$-planes
and so splits as a sum of complex conjugate, $\cD^V$-parallel, null
line subbundles:
\begin{equation*}
V^{\perp}=V^{\perp}_{+}\oplus V^{\perp}_{-}.
\end{equation*}
As a consequence, we have a family of $\cD^V$-parallel gauge
transformations
$\tau_V(\lambda):=\Gamma^{V_{+}^{\perp}}_{V_{-}^{\perp}}(\lambda)$ and, in
particular, an almost complex structure $J^V$ on $V\wedge V^{\perp}$
given by $J^V=\tau_V(\sqrt{-1})_{|V\wedge V^{\perp}}$.  We may
therefore split $\cN_V$ into its holomorphic and anti-holomorphic
parts:
\begin{equation}\label{eq:52}
\cN_V=\cA_V+\cQ_V,
\end{equation}
with $\cA_V\circ J^{\Sigma}=J^V\circ\cA_V$ and $\cQ_V\circ
J^{\Sigma}=-J^V\circ\cQ_V$.  Thus $\cA_{V}^{1,0}\in\Omega^{1,0}(V\wedge
V^{\perp}_{+})$ while $\cQ_{V}^{1,0}\in\Omega^{1,0}(V\wedge
V^{\perp}_{-})$.

We note the case where $\cA=0$ (respectively, $\cQ=0$): this means
that $V$ is a (anti-)holomorphic map into the space of oriented
$2$-spheres or, equivalently, $V^{\perp}_{+}$ (respectively,
$V^{\perp}_{-}$) is a holomorphic subbundle of $\unC^{6}$.  If, in
addition, $V$ is the central sphere congruence of a conformal
immersion $\Lambda$, then this condition obtains if and only if
$\Lambda$ is \emph{twistor holomorphic}, that is, $\Lambda$ has a
holomorphic twistor lift to $\C P^3$ \cite[\S 8.1 and
Lemma~22]{BurFerLesPed02}.  Such $\Lambda$ are, of course, Willmore
surfaces.

\subsection{Untwisted family of flat connections}
\label{sec:untwisted-loops-flat}

Let $(V,d^{\lambda})$ be a $k$-perturbed harmonic map into
$\tilde{G}_{3,1}(\R^{5,1})$.  Since $\lambda\mapsto\tau_V(\lambda)$
is a group homomorphism $\C^{\times}\to\rO(\unC^{6})$ and
$\tau_V(-1)=\rho_V$, we readily conclude from \eqref{eq:4} that
\begin{equation}
\label{eq:21}
\tau_V(\lambda)\cdot d^{\lambda}=\tau_V(-\lambda)\cdot d^{-\lambda}.
\end{equation}
Thus there is a second family of connections
$\nabla^{\mu}$, $\mu\in\C^{\times}$, defined by
\begin{equation}
\label{eq:33}
\nabla^{\lambda^2}=\tau_V(\lambda)\cdot d^{\lambda},
\end{equation}
for $\lambda\in\C^{\times}$.

\begin{prop}
\label{th:17}
Let $(V,d^{\lambda})$ be a $k$-perturbed harmonic map into
$\tilde{G}_{3,1}(\R^{5,1})$ with $d^{\lambda}=\sum_{|i|\leq k}A_k$.  Define $\nabla^{\mu}$,
$\mu\in\C^{\times}$, by \eqref{eq:33}.  Then:
\begin{subequations}\label{eq:34}
\begin{align}
\label{eq:35}
\nabla^1&=d&&\text{(normalisation)}\\
\label{eq:36}
\nabla^{\mu^{*}}&=\overline{\nabla^{\mu}}&&\text{(reality)}
\end{align}
\end{subequations}
and $\mu\mapsto(\nabla^{\mu})^{1,0}$ is holomorphic on $\C$ with a
pole of order $l$ at $\infty$ (whence, by \eqref{eq:36},
$\mu\mapsto(\nabla^{\mu})^{0,1}$ is holomorphic on
$\C^{\times}\cup\{\infty\}$ with a pole of order $l$ at $0$).  Here
\begin{equation}
\label{eq:37}
l=
\begin{cases}
(k-1)/2&\text{if $k$ is odd and $A_kV_{-}^{\perp}=0$;}\\
k/2&\text{if $k$ is even;}\\
(k+1)/2&\text{otherwise.}
\end{cases}
\end{equation}
Moreover $V_{-}^{\perp}$ is $(\nabla^0)^{1,0}$-stable (whence
$V_{+}^{\perp}$ is $(\nabla^{\infty})^{0,1}$-stable).
\end{prop}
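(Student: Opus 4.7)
The plan is to compute $\nabla^{\lambda^2} = \tau_V(\lambda)\cdot d^\lambda$ explicitly by decomposing $\fo(\unC^6) = \wedge^2\unC^6$ into eigenspaces of $\Ad_{\tau_V(\lambda)}$. Since $V^\perp_+$ and $V^\perp_-$ are complex-conjugate null line subbundles of $V^\perp$, $\tau_V(\lambda)$ has eigenvalues $\lambda, 1, \lambda^{-1}$ on $V^\perp_+, V, V^\perp_-$ respectively, so $\Ad_{\tau_V(\lambda)}$ acts on $\fo$ with eigenvalues $\lambda^{\pm1}, 1$: the $\lambda$-eigenspace is $V\wedge V^\perp_+$, the $\lambda^{-1}$-eigenspace is $V\wedge V^\perp_-$, and the $1$-eigenspace is $\wedge^2 V\oplus V^\perp_+\wedge V^\perp_-$.

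Well-definedness of $\nabla^\mu$ as a function of $\mu=\lambda^2$ is provided by \eqref{eq:21}, and $\nabla^1=d$ is immediate from $\tau_V(1)=1$. For the reality condition I would exploit that complex conjugation interchanges $V^\perp_\pm$, which forces $\overline{\tau_V(\lambda)}=\tau_V(\lambda^{*})$; combining this with the reality of $d^\lambda$ yields \eqref{eq:36}.

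The main computation goes as follows. The relation $\Ad_{\rho_V}A_i=(-1)^i A_i$ together with $\rho_V=\tau_V(-1)$ forces each $A_i$ of even $i$ to lie in the $1$-eigenspace, and each $A_i$ of odd $i$ to split as $A_i=A_i^++A_i^-$ with $A_i^\pm\in\Omega^1(V\wedge V^\perp_\pm)$. Using that $\tau_V(\lambda)$ is $\cD_V$-parallel (hence $\tau_V(\lambda)\cdot\cD_V=\cD_V$, and therefore $\tau_V(\lambda)\cdot A_0=A_0$ since $A_0-\cD_V=-\sum_{i\ne 0}A_{2i}$ lies entirely in the $1$-eigenspace) one obtains
\begin{equation*}
\nabla^\mu = A_0 + \sum_{\text{even }i\ne 0}\mu^{i/2}A_i + \sum_{\text{odd }i}\bigl(\mu^{(i+1)/2}A_i^+ + \mu^{(i-1)/2}A_i^-\bigr).
\end{equation*}
The reality $A_{-i}=\overline{A_i}$ combined with $A_i^{1,0}=0$ for $i<0$ shows that each $A_i$ with $i>0$ is purely $(1,0)$. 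Taking the $(1,0)$ part and reading off the highest exponent of $\mu$ case by case then yields \eqref{eq:37}, using the equivalence $A_k V^\perp_-=0\Leftrightarrow A_k^+=0$, which holds since $V^\perp_-$ is null so $A_k^- V^\perp_-$ vanishes automatically.

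Finally, for stability of $V^\perp_-$ under $(\nabla^0)^{1,0}$ the expansion shows $(\nabla^0)^{1,0}=A_0^{1,0}+A_1^-$, and each summand preserves $V^\perp_-$: the operator $A_0$ does because $\cD_V$ does and the difference $A_0-\cD_V$ takes values in $\wedge^2 V$ (which annihilates $V^\perp$) and $V^\perp_+\wedge V^\perp_-$ (which preserves each of the null lines $V^\perp_\pm$), while $A_1^-$ kills $V^\perp_-$ by the same null-isotropy argument, since a decomposable bivector in $V\wedge V^\perp_-$ applied to a vector in the null line $V^\perp_-$ returns zero. The main obstacle is establishing cleanly the identity $\tau_V(\lambda)\cdot A_0=A_0$; once that is in hand the remainder of the argument is mechanical bookkeeping of powers of $\lambda$.
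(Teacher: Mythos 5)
Your proposal is correct and follows essentially the same route as the paper: decompose $\fo(\unC^6)$ into the $\lambda^{\pm1},1$-eigenspaces of $\Ad_{\tau_V(\lambda)}$, expand $(\nabla^{\lambda^2})^{1,0}$ in powers of $\mu=\lambda^2$, read off the pole order case by case, and observe that $A_0$ preserves $V^{\perp}_{-}$ while $A_1^{-}$ annihilates it. The only difference is that you make explicit the identity $\tau_V(\lambda)\cdot A_0=A_0$ (via $\cD_V$-parallelism of $\tau_V$ and the fact that $A_0-\cD_V$ lies in the $1$-eigenspace), which the paper leaves implicit.
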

\begin{proof}
We have $\tau_V(1)=1$ and
$\tau_V(\lambda^{*})=\overline{\tau_V(\lambda)}$ which, together
with \eqref{eq:1}, readily yields \eqref{eq:34}.

For $A\in\Omega^1(\Wedge^2\unC^6)$, decompose $A=A^{+}+A^{0}+A^{-}$
according to the eigenspace decomposition of $\Ad_{\tau_V(\lambda)}$:
\begin{equation*}
\Wedge^2\unC^6=(V\wedge V^{\perp}_{+})
\oplus(\Wedge^2V\oplus\Wedge^2V^{\perp})\oplus 
(V\wedge V^{\perp}_{-}).
\end{equation*}
Then 
\begin{equation*}
(\nabla^{\lambda^2})^{1,0}=(A_{0}+A_1^{-})+\lambda^{2}(A_1^{+}+A_2+A_3^{-})+\dots
\end{equation*}
which establishes the holomorphicity of
$\mu\mapsto(\nabla^{\mu})^{1,0}$ on $\C$.  Moreover, as $A_0$
preserves $V_{\pm}^{\perp}$ while $A_1^{-}$ annihilates
$V^{\perp}_{-}$, we conclude that $V^{\perp}_{-}$ is
$(\nabla^0)^{1,0}$-stable.

Finally, we contemplate the leading term in $\lambda$ of
$(\nabla^{\lambda^2})^{1,0}$: if $k$ is even, this is
$\lambda^{k}(A_k+A_{k-1}^{+})$, while, for odd $k$, it is either
$\lambda^{k+1}A^{+}_k$, if $A_k^{+}\neq 0$, or
$\lambda^{k-1}(A_{k-1}+A_k^{-})$ when $A_k^{+}=0$, or, equivalently,
$A_kV^{\perp}_{-}=0$.
\end{proof}

Thus these connections share many properties of the family
$d^{\lambda}$ but are not $\rho_V$-twisted and have poles of half the
order.  In view of the first property, we call $\nabla^{\mu}$ the
\emph{untwisted family of connections} associated to
$(V,d^{\lambda})$.

We can reverse this line of argument and start from an untwisted
family of flat connections:
\begin{thm}
\label{th:18}
Let $V$ be a bundle of $(3,1)$-planes in $\underline{\R}^{5,1}$ and
$\nabla^{\mu}=\sum_{|j|\leq l}\mu^{j}B_{j}$, $\mu\in\C^{\times}$, a family of flat connections
satisfying the conclusions of Proposition~\ref{th:17}.  

Define connections $d^{\lambda}$ by
\begin{equation}
\label{eq:39}
d^{\lambda}=\tau_V(\lambda)^{-1}\cdot\nabla^{\lambda^2},
\end{equation}
$\lambda\in\C^{\times}$.  Then $d^{\lambda}$ satisfies the conditions
of Definition \ref{th:10} so that $(V,d^{\lambda})$ is a
$k$-perturbed harmonic bundle where
\begin{equation}\label{eq:40}
k=
\begin{cases}
2l-1&\text{if
$B_{l}^{-}=B_l^0=0$;}\\
2l&\text{if $B_l^{-}=0$ but $B_l^0\neq 0$;}\\
2l+1&\text{otherwise.}
\end{cases}
\end{equation}
\end{thm}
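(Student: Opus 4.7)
The plan is to check, in turn, the four conditions of Definition~\ref{th:10}; flatness of $d^{\lambda}$ on $\C^{\times}$ is automatic since $d^{\lambda}$ is a gauge transform of the flat connection $\nabla^{\lambda^2}$. Writing $\tau_V(\lambda) = \lambda P_{+} + P_{0} + \lambda^{-1} P_{-}$ with $P_{\pm}$, $P_{0}$ the projectors onto $V_{\pm}^{\perp}$ and $V$, one reads off the pointwise identities $\tau_V(1) = 1$, $\tau_V(-\lambda) = \rho_V \tau_V(\lambda) = \tau_V(\lambda)\rho_V$, and $\overline{\tau_V(\lambda)} = \tau_V(\lambda^{*})$ (the last using that $V$ is real and $\overline{V_{+}^{\perp}} = V_{-}^{\perp}$). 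These combine with \eqref{eq:35}--\eqref{eq:36} to yield the normalisation $d^{1} = d$, the $\rho_V$-twisting $\rho_V\cdot d^{\lambda} = d^{-\lambda}$, and the reality $d^{\lambda^{*}} = \overline{d^{\lambda}}$ immediately.

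The substantive content is the pole analysis, which rests on the identity
\begin{equation*}
\tau_V(\lambda)^{-1}\cdot d = \cD_V + \lambda^{-1}\cN_V^{+} + \lambda\cN_V^{-},
\end{equation*}
with $\cN_V = \cN_V^{+} + \cN_V^{-}$ the splitting $\Omega^{1}(V\wedge V^{\perp}) = \Omega^{1}(V\wedge V_{+}^{\perp}) \oplus \Omega^{1}(V\wedge V_{-}^{\perp})$; this holds because $\cD_V$ preserves $V_{\pm}^{\perp}$ and so commutes with $\tau_V(\lambda)$, while $\Ad_{\tau_V(\lambda)^{-1}}$ acts by $\lambda^{\mp 1}$ on $V \wedge V_{\pm}^{\perp}$. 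Writing $\nabla^{\mu} - d = \sum_{|j|\leq l} \mu^{j} C_{j}$ with $C_{0} = B_{0} - d$ and $C_{j} = B_{j}$ for $j \neq 0$, and decomposing each $1$-form $C_{j} = C_{j}^{+} + C_{j}^{0} + C_{j}^{-}$ in the eigenspaces of $\Ad_{\tau_V(\lambda)}$, this gives the Laurent expansion
\begin{equation*}
d^{\lambda} = \cD_V + \lambda^{-1}\cN_V^{+} + \lambda\cN_V^{-} + \sum_{|j|\leq l}\bigl(\lambda^{2j-1}C_{j}^{+} + \lambda^{2j}C_{j}^{0} + \lambda^{2j+1}C_{j}^{-}\bigr).
\end{equation*}
The maximal $\lambda$-powers arise at $j = l$, contributing $\lambda^{2l-1}B_{l}^{+}$, $\lambda^{2l}B_{l}^{0}$, $\lambda^{2l+1}B_{l}^{-}$, and no other term reaches that degree; reading off the largest surviving exponent yields the trichotomy \eqref{eq:40}.

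For holomorphicity of $(d^{\lambda})^{1,0}$ on all of $\C$, the holomorphicity of $(\nabla^{\mu})^{1,0}$ at $\mu = 0$ forces $(B_{j})^{1,0} = 0$ for $j < 0$, killing all would-be poles at $\lambda = 0$ except the $\lambda^{-1}$ coefficient $(\cN_V^{+})^{1,0} + (C_{0}^{+})^{1,0}$. This is the one place where the last hypothesis enters, and it is the main obstacle: for a section $v$ of $V_{-}^{\perp}$, I decompose $(d + C_{0})^{1,0}v$ in the direct sum $V_{-}^{\perp}\oplus V\oplus V_{+}^{\perp}$, observing that $\cD_V^{1,0}v$ and $(C_{0}^{0})^{1,0}v$ lie in $V_{-}^{\perp}$, while $(\cN_V^{-})^{1,0}$ and $(C_{0}^{-})^{1,0}$ annihilate $V_{-}^{\perp}$ by null-plane considerations (elements of $V\wedge V_{-}^{\perp}$ kill $V_{-}^{\perp}$). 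So the only possible obstruction to $V_{-}^{\perp}$-stability of $(\nabla^{0})^{1,0}$ is the $V$-valued term $\bigl((\cN_V^{+})^{1,0} + (C_{0}^{+})^{1,0}\bigr)v$. Since the restriction map $V\wedge V_{+}^{\perp} \to \mathrm{Hom}(V_{-}^{\perp}, V)$ is a linear isomorphism (both sides are $4$-dimensional and injectivity follows from non-degeneracy of the $V_{+}^{\perp}$--$V_{-}^{\perp}$ pairing), the hypothesis forces $(\cN_V^{+})^{1,0} + (C_{0}^{+})^{1,0} = 0$, killing the $\lambda^{-1}$ term. The matching pole order for $(d^{\lambda})^{0,1}$ at $\lambda = 0$ then follows from reality.
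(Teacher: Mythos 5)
Your proof is correct and follows essentially the same route as the paper: the algebraic conditions of Definition~\ref{th:10} are checked via the identities $\tau_V(1)=1$, $\tau_V(-\lambda)=\rho_V\tau_V(\lambda)$, $\overline{\tau_V(\lambda)}=\tau_V(\lambda^{*})$, and the pole count comes from the Laurent expansion in the eigenspaces of $\Ad_{\tau_V(\lambda)}$, which reproduces the paper's leading-term display and the trichotomy \eqref{eq:40}. The only difference is that where the paper disposes of the potential pole at $\lambda=0$ by adapting Lemma~\ref{th:7} to the partial connections $\lambda\mapsto(\nabla^{\lambda^{2}})^{1,0}$, you inline that argument, deducing $(\cN_V^{+})^{1,0}+(C_{0}^{+})^{1,0}=0$ directly from the $(\nabla^{0})^{1,0}$-stability of $V_{-}^{\perp}$ via the isomorphism $V\wedge V_{+}^{\perp}\cong\mathrm{Hom}(V_{-}^{\perp},V)$ --- the same underlying computation.
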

\begin{proof}
It is clear that, with $d^{\lambda}$ defined by \eqref{eq:39}, we
have $d^1=d$ and that the reality condition \eqref{eq:5} holds.
Moreover, we readily compute:
\begin{equation*}
d^{-\lambda}=\tau_V(-\lambda)^{-1}\cdot\nabla^{\lambda^2}=
(\rho_V\tau_V(\lambda)^{-1})\cdot\nabla^{\lambda^{2}}=\rho_V\cdot d^{\lambda},
\end{equation*}
so that the twisting condition \eqref{eq:4} holds also.

As for the dependence in $\lambda$ of $d^{\lambda}$, it is clear that
$\lambda\mapsto d^{\lambda}$ is holomorphic on $\C^{\times}$ while
the argument of Lemma~\ref{th:7} can be easily adapted to treat the
family of partial connections
$\lambda\mapsto(\nabla^{\lambda^{2}})^{1,0}$ and we deduce that
$(d^{\lambda})^{1,0}$ is holomorphic at $0$ since $V^{\perp}_{-}$ is
$(\nabla^0)^{1,0}$-stable.  Finally, the leading terms in $\lambda$ of
$d^{\lambda}$ are
\begin{equation*}
\lambda^{2k+1}B^{-}_l+\lambda^{2k}B_l^0+\lambda^{2k-1}(B_l^{+}+B_{l-1}^{-})+\dots
\end{equation*}
from which \eqref{eq:40} follows at once.
\end{proof}

We conclude that the families $d^{\lambda}$ and $\nabla^{\mu}$ carry
the same information and therefore refer to a $k$-perturbed harmonic
bundle in codimension $2$ as either $(V,d^{\lambda})$ or
$(V,\nabla^{\mu})$, where $d^{\lambda}$ and $\nabla^{\mu}$ are
related via \eqref{eq:33} and \eqref{eq:39}.

We list some examples:
\begin{enumerate}
\item $\nabla^{\mu}\equiv d$ satisfies the conditions of
Proposition~\ref{th:17} if and only if $V^{\perp}_{+}$ is
holomorphic.  This is the case $l=0$ of Theorem \ref{th:18}.
\item $V$ is harmonic if and only if
$\nabla^{\mu}=d+(\mu-1)\cA^{1,0}+(\mu^{-1}-1)\cA^{0,1}$ is flat, for
all $\mu\in\C^{\times}$.  A similar result is true for maps into any
(pseudo-)Hermitian symmetric space.
\item Let $V$ be the central sphere congruence of $\Lambda$.  Then
$\Lambda$ is constrained Willmore with multiplier $q$ if and only if
$\nabla^{\mu}=d+(\mu-1)(q+\cA)^{1,0}+(\mu^{-1}-1)(q+\cA)^{0,1}$ is
flat, for all $\mu\in\C^{\times}$ \cite{Boh10}.  This is the case
$l=1$ of Theorem \ref{th:18} with $B_1^{-}=0$ and the additional
constraint that $B_1^0$ take values in $\Lambda\wedge\Lambda^{(1)}$.
\end{enumerate}

\subsection{Untwisted dressing gauges}
\label{sec:untw-dress-gaug}

The untwisted family of connections has a simpler algebraic structure. It
is therefore unsurprising that we may dress such families with
algebraically simpler gauge transformations:

\begin{defn}\label{th:19}
Let $(V,\nabla^{\mu})$ be a $k$-perturbed harmonic bundle in
$\unC^{6}$.  An \emph{untwisted dressing gauge} for
$(V,\nabla^{\mu})$ is a family of gauge transformations $\mu\mapsto
R(\mu)\in\Gamma\rO(\unC^6)$ which is holomorphic in $\mu$ near
$0,\infty\in\P^1$ and has the following properties:
\begin{enumerate}
\item[(a)]\label{item:10} For all $\mu\in\dom(R)$,
\begin{equation}
\label{eq:38}
R(\mu^{*})=\overline{R(\mu)}\qquad\text{(reality)}.
\end{equation}
\item[(b)]\label{item:11} The connections $R(\mu)\cdot\nabla^{\mu}$
extend from $\dom(R)\setminus\set{0,\infty}$ to a holomorphic family
of connections $\hat{\nabla}^{\mu}$ on $\C^{\times}$ with
$\hat{\nabla}^1=d$.
\item[(c)]\label{item:12} The line bundles $R(0)V_{-}^{\perp}$ and
$R(\infty)V_{+}^{\perp}$ are complementary.
\end{enumerate}
For such an $R$, set $\hat{V}=(R(0)V_{-}^{\perp}\oplus
R(\infty)V_{+}^{\perp})^{\perp}$, oriented so that
\begin{align}
\label{eq:42}
\hV^{\perp}_{-}&=R(0)V^{\perp}_{-},&\hV^{\perp}_{+}&=R(\infty)V_{+}^{\perp},
\end{align}
and call $(\hV,\hat{\nabla}^{\mu})$ the \emph{untwisted dressing
transform of $(V,\nabla^{\mu})$ by $R$}.
\end{defn}

We note that $\hV$ is a bundle of $(3,1)$-planes since $\hV^{\perp}$
is a sum of complex conjugate null lines.

It is our contention that $(\hV,\hat{\nabla}^{\mu})$ is again
$k$-perturbed harmonic.  This will follow at once from the next
proposition which reduces the situation to the twisted setup of
section \ref{dressingaction}.
\begin{prop}\label{th:21}
Let $(\hV,\hat\nabla^{\mu})$ be the untwisted dressing transform of
$(V,\nabla^{\mu})$ by $R$.  Set
\begin{equation}
\label{eq:41}
r(\lambda)=\tau_{\hV}(\lambda)^{-1}\circ R(\lambda^2)\circ\tau_V(\lambda).
\end{equation}
Then $r$ is a dressing gauge for $(V,d^{\lambda})$ and, with
$\hd^{\lambda}=\tau_{\hV}(\lambda)^{-1}\cdot\hat{\nabla}^{\lambda^2}$,
$(\hV,\hd^{\lambda})$ is the dressing transform of $(V,d^{\lambda})$
by $r$.

In particular, $(V,d^{\lambda})$ is $k$-perturbed harmonic by
Theorem~\ref{th:1}.
\end{prop}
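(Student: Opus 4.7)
My plan is to verify each of the four conditions of Definition~\ref{th:9} that make $r$ a dressing gauge for $(V,d^{\lambda})$, and then identify $(\hV,\hd^{\lambda})$ as the corresponding dressing transform. The starting observation is the algebraic identity
\[
r(\lambda)\,\tau_V(\lambda)^{-1} = \tau_{\hV}(\lambda)^{-1}\,R(\lambda^2),
\]
which, combined with the defining relations $d^{\lambda} = \tau_V(\lambda)^{-1}\cdot\nabla^{\lambda^2}$ and $\hat\nabla^{\lambda^2} = R(\lambda^2)\cdot\nabla^{\lambda^2}$, at once gives $r(\lambda)\cdot d^\lambda = \tau_{\hV}(\lambda)^{-1}\cdot\hat\nabla^{\lambda^2} = \hd^\lambda$ on $\dom(r)\setminus\set{0,\infty}$.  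Since $\hat\nabla^\mu$ is holomorphic on $\C^{\times}$, so is $\hd^{\lambda}$, and $\hd^1 = d$ is immediate from $\tau_{\hV}(1) = 1$ and $\hat\nabla^1 = d$; this is item (\ref{item:3}) of Definition~\ref{th:9}.

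For the twisting condition, the group-law identity $\tau_V(\lambda)\tau_V(\mu) = \tau_V(\lambda\mu)$ together with $\tau_V(-1) = \rho_V$ yields $\tau_V(-\lambda)\rho_V\tau_V(\lambda)^{-1} = 1$, so that
\[
r(-\lambda)\rho_V r(\lambda)^{-1} = \tau_{\hV}(-\lambda)^{-1}R(\lambda^2)R(\lambda^2)^{-1}\tau_{\hV}(\lambda) = \rho_{\hV}
\]
is independent of $\lambda$.  Reality is analogous: $\overline{\tau_V(\lambda)} = \tau_V(\lambda^{*})$ (a consequence of $\overline{V^{\perp}_{\pm}} = V^{\perp}_{\mp}$) and the reality of $R$ together give $\overline{r(\lambda)} = r(\lambda^{*})$.

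The main obstacle will be to verify that $r$ is holomorphic at $\lambda = 0$ and $\lambda = \infty$, since $\tau_V(\lambda)$ and $\tau_{\hV}(\lambda)^{-1}$ each have poles at those points.  I would analyse $r$ componentwise near $\lambda = 0$ with respect to the decomposition $\unC^6 = V^{\perp}_{+}\oplus V\oplus V^{\perp}_{-}$, treating $\lambda = \infty$ by reality.  On $V^{\perp}_{-}$, the $\lambda^{-1}$ pole of $\tau_V$ is cancelled by the $\lambda$ zero of $\tau_{\hV}^{-1}$ on $\hV^{\perp}_{-}$, using the defining relation $\hV^{\perp}_{-} = R(0)V^{\perp}_{-}$.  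On $V^{\perp}_{+}$, the $\lambda$ zero from $\tau_V$ absorbs any pole from $\tau_{\hV}^{-1}$.  The delicate case is $V$ itself: one must exclude any $\hV^{\perp}_{+}$-component of $R(0)V$, but this follows from $V\subseteq(V^{\perp}_{-})^{\perp}$, which under the isometry $R(0)$ gives $R(0)V \subseteq (\hV^{\perp}_{-})^{\perp} = \hV\oplus\hV^{\perp}_{-}$, manifestly orthogonal to $\hV^{\perp}_{+}$.

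Once holomorphicity is in hand, evaluating the twisting condition at $\lambda = 0, \infty$ shows that $r(0)$ and $r(\infty)$ intertwine $\rho_V$ and $\rho_{\hV}$, and so both map $V$ isomorphically onto $\hV$; hence $r(0)^{-1}r(\infty)$ preserves $V$.  By reality $r(\infty) = \overline{r(0)}$, while $\hV$ is a real $(3,1)$-bundle from the untwisted construction, since $\hV^{\perp} = R(0)V^{\perp}_{-}\oplus R(\infty)V^{\perp}_{+}$ is a sum of complex-conjugate null lines.  Lemma~\ref{th:22}(\ref{item:8}) applied with $W = V$, $T = r(0)$ then yields $\det(r(0)^{-1}\overline{r(0)}_{|V}) = 1$, which is condition~(\ref{item:4}).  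Thus $r$ is a dressing gauge for $(V, d^{\lambda})$, $(\hV, \hd^{\lambda})$ is the associated dressing transform, and Theorem~\ref{th:1} concludes the $k$-perturbed harmonicity.
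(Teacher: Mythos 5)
Your proposal is correct and follows the same overall strategy as the paper: check the four conditions of Definition~\ref{th:9}, with the twisting, reality, gauge-identity and determinant steps argued exactly as in the paper's proof (including the use of Lemma~\ref{th:22}(\ref{item:8}) with $r(\infty)=\overline{r(0)}$ and the realness of $\hV$ coming from the untwisted construction). The one place you diverge is the holomorphicity of $r$ at $0,\infty$: the paper disposes of this in one line by invoking Lemma~\ref{th:11} together with \eqref{eq:42}, whereas you redo the componentwise computation by hand. That computation does work, but as written your treatment of the $V^{\perp}_{-}$ piece is slightly short: for $v\in V^{\perp}_{-}$ one gets
\begin{equation*}
r(\lambda)v=a(\lambda)+\lambda^{-1}b(\lambda)+\lambda^{-2}c(\lambda),
\end{equation*}
where $a,b,c$ are the $\hV^{\perp}_{-}$-, $\hV$- and $\hV^{\perp}_{+}$-components of $R(\lambda^{2})v$, so cancelling the pole requires not only $R(0)V^{\perp}_{-}=\hV^{\perp}_{-}$ (which gives $b(0)=c(0)=0$) but also that $c$ vanish to \emph{second} order. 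This extra order is free in your setup, either because $\lambda\mapsto R(\lambda^{2})$ is an even function of $\lambda$, or by the isotropy/orthogonality argument underlying Lemma~\ref{th:11} ($R(\lambda^2)v$ is null, forcing $(a,c)=O(\lambda^{2})$ and hence $c=O(\lambda^{2})$ since $a(0)\neq0$); adding one sentence to this effect closes the argument. Your handling of the $V$-component via $R(0)V\perp\hV^{\perp}_{-}$ is exactly the orthogonality trick that proof of the lemma uses, so in effect you are reproving the needed instance of Lemma~\ref{th:11} rather than citing it.
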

\begin{proof}
We check that $r$ defined by \eqref{eq:41} satisfies the conditions
of Definition~\ref{th:9}.  For this, first note that
Lemma~\ref{th:11} along with \eqref{eq:42}, shows that
$\lambda\mapsto r(\lambda)$ is holomorphic near $0$ and $\infty$.
Now, for item (\ref{item:1}) of Definition~\ref{th:9}, a short
calculation gives
\begin{equation}
\label{eq:43}
r(-\lambda)\circ\rho_V\circ r(\lambda)^{-1}=\rho_{\hV},
\end{equation}
which is certainly independent of $\lambda$. Item (\ref{item:2}) is
immediate from the corresponding reality conditions on
$R,\tau_V,\tau_{\hV}$.  In particular, evaluating at $\lambda=0$
gives $r(\infty)=\overline{r(0)}$.  Again, with
$d^{\lambda}=\tau_V(\lambda)^{-1}\cdot\nabla^{\lambda^2}$, we have,
from item (b) of Definition~\ref{th:19}, that
$\hd^{\lambda}=r(\lambda)\cdot d^{\lambda}$ on
$\dom(r)\setminus\set{0,\infty}$.  However, it is clear from its
definition that $\lambda\mapsto\hd^{\lambda}$ is holomorphic on
$\C^{\times}$ and that $\hd^1=d$.  This settles item (\ref{item:3}).
Finally, for item (\ref{item:4}), evaluate \eqref{eq:43} at
$\lambda=0,\infty$ to deduce that $\hV=r(0)V=r(\infty)V$.  We have
already noted that $\hV$ is a bundle of $(3,1)$-planes and now
Lemma~\ref{th:22}(\ref{item:8}) together with
$r(\infty)=\overline{r(0)}$ immediately yields \eqref{eq:31}.

We therefore conclude that $r$ is indeed a dressing gauge for
$(V,d^{\lambda})$.  Moreover, since $r(\lambda)\cdot
d^{\lambda}=\hd^{\lambda}$, for
$\lambda\in\dom(r)\setminus\set{0,\infty}$, and $\hV=r(0)V$, we see
that $(\hV,\hd^{\lambda})$ is the dressing transform of
$(V,d^{\lambda})$ by $r$.
\end{proof}

With an eye to dressing constrained Willmore surfaces, we compute
$r(0)$ and $r(\infty)$:
\begin{lemma}
\label{th:20}
With $V,\hV$ and $r,R$ as in Proposition~\ref{th:21}, define
projections $\pi_{i},\hat{\pi}_{i}$ by 
\begin{align*}
\tau_V(\lambda)&=\lambda\pi_1+\pi_0+\lambda^{-1}\pi_{-1}\\
\tau_{\hV}(\lambda)&=\lambda\hat{\pi}_1+\hat{\pi}_0+\lambda^{-1}\hat{\pi}_{-1}.
\end{align*}
Then:
\begin{align}
\label{eq:44}
r(0)&=\sum_{|i|\leq 1}\hat{\pi}_iR(0)\pi_{i}&
r(\infty)&=\sum_{|i|\leq 1}\hat{\pi}_iR(\infty)\pi_{i}.
\end{align}
\end{lemma}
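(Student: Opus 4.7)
The plan is to expand $r(\lambda)=\tau_{\hV}(\lambda)^{-1}R(\lambda^2)\tau_V(\lambda)$ as a Laurent polynomial in $\lambda$. Since $\tau_{\hV}(\lambda)^{-1}=\lambda^{-1}\hat{\pi}_1+\hat{\pi}_0+\lambda\hat{\pi}_{-1}$, multiplying out the three factors yields
\begin{equation*}
r(\lambda)=\sum_{i,j\in\{-1,0,1\}}\lambda^{j-i}\,\hat{\pi}_i\,R(\lambda^2)\,\pi_j.
\end{equation*}
By Proposition~\ref{th:21}, $r$ is holomorphic at $\lambda=0$ and $\lambda=\infty$, so $r(0)$ and $r(\infty)$ can be read off by extracting the constant-in-$\lambda$ coefficient of the expansion at each point.

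At $\lambda=0$, the terms with $j>i$ vanish and the diagonal terms $i=j$ contribute $\sum_{|i|\leq 1}\hat{\pi}_i R(0)\pi_i$, which is the proposed answer. The terms with $i>j$ carry formal pole singularities whose vanishing, forced by holomorphicity of $r$ (and separately derivable from \eqref{eq:42} and the orthogonality of $R(0)$), gives $\hat{\pi}_i R(0)\pi_j=0$ for $i>j$. The only such pair that additionally leaves a non-zero constant-coefficient contribution is $(i,j)=(1,-1)$: writing $R(\lambda^2)=R(0)+\lambda^2 R'(0)+O(\lambda^4)$, the product $\lambda^{-2}\hat{\pi}_1 R(\lambda^2)\pi_{-1}$ leaves behind a residual term $\hat{\pi}_1 R'(0)\pi_{-1}$.

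The key step, on which the clean form of the lemma hinges, is that this corner term vanishes. Orthogonality of $R(\mu)$ together with the null-ness of $V_{-}^{\perp}$ gives $(R(\mu)u,R(\mu)u)=0$ for any $u\in V_{-}^{\perp}$ and all $\mu$; differentiating at $\mu=0$ yields $(R'(0)u,R(0)u)=0$. Since $R(0)u$ spans $R(0)V_{-}^{\perp}=\hV_{-}^{\perp}$ by \eqref{eq:42}, one concludes that $R'(0)u\in(\hV_{-}^{\perp})^{\perp}=\hV\oplus\hV_{-}^{\perp}=\ker\hat{\pi}_1$, so $\hat{\pi}_1 R'(0)\pi_{-1}=0$. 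This completes the computation of $r(0)$, and the formula for $r(\infty)$ is then immediate from the reality condition $r(\lambda^{*})=\overline{r(\lambda)}$ established in Proposition~\ref{th:21}, since complex conjugation exchanges $\pi_{\pm 1}\leftrightarrow\pi_{\mp 1}$ and $\hat{\pi}_{\pm 1}\leftrightarrow\hat{\pi}_{\mp 1}$, fixes $\pi_0,\hat{\pi}_0$, and sends $R(0)$ to $R(\infty)$. The main subtlety is exactly this cancellation of the $R'(0)$ corner term: without it, $r(0)$ would pick up an unwanted derivative contribution and the closed form of the lemma would fail.
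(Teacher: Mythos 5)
Your argument is correct, but it takes a genuinely different route from the paper's. The paper never meets the derivative term you have to kill: it first shows that $r(0)$ and $r(\infty)$ are block-diagonal for the eigenbundle decompositions, i.e.\ $r(0)V^{\perp}_{\pm}=\hV^{\perp}_{\pm}$ and likewise at $\infty$, by applying Lemma~\ref{th:11} to the rearranged identity $R(\lambda^{2})=\tau_{\hV}(\lambda)\circ r(\lambda)\circ\tau_{V}(\lambda)^{-1}$ and then noting that $r(0)V^{\perp}_{-}$, being maximal isotropic in $\hV^{\perp}=r(0)V^{\perp}$, must coincide with $\hV^{\perp}_{-}$; block-diagonality gives $r(0)=\sum_{|i|\leq1}\hat{\pi}_{i}r(0)\pi_{i}$, and then the exact cancellation $\hat{\pi}_{i}\tau_{\hV}(\lambda)^{-1}R(\lambda^{2})\tau_{V}(\lambda)\pi_{i}=\hat{\pi}_{i}R(\lambda^{2})\pi_{i}$ yields \eqref{eq:44} on evaluation at $0,\infty$, with no derivative of $R$ ever intervening. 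You instead expand $r(\lambda)=\sum_{i,j}\lambda^{j-i}\hat{\pi}_{i}R(\lambda^{2})\pi_{j}$ directly and so must confront the corner contribution $\hat{\pi}_{1}R'(0)\pi_{-1}$ to the constant coefficient; your disposal of it is sound: differentiating the isotropy relation $(R(\mu)u,R(\mu)u)=0$ for $u\in\Gamma V^{\perp}_{-}$ (the form being the complex-bilinear one preserved by $R(\mu)$) gives $(R'(0)u,R(0)u)=0$, and \eqref{eq:42} identifies $(\hV^{\perp}_{-})^{\perp}=\hV\oplus\hV^{\perp}_{-}=\ker\hat{\pi}_{1}$, so the term vanishes; the odd-power terms indeed cannot contribute at $\lambda=0$, and the passage to $\infty$ via $r(\infty)=\overline{r(0)}$ and conjugation of the projections is the same device the paper could use. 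The trade-off: the paper's route records the geometrically useful facts $r(0)V^{\perp}_{\pm}=\hV^{\perp}_{\pm}$, $r(\infty)V^{\perp}_{\pm}=\hV^{\perp}_{\pm}$ (reused in the subsequent remark on which dressing gauges come from untwisted ones), while your computation makes explicit exactly why no derivative of $R$ pollutes the closed formula — a point the paper's sandwich cancellation conceals.
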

\begin{proof}
We have already seen that $r(0)V=r(\infty)V=\hV$ but more is true: rearrange
\eqref{eq:41} to give
\begin{equation*}
R(\lambda^2)=\tau_{\hV}(\lambda)\circ r(\lambda)\circ\tau_V(\lambda)^{-1},
\end{equation*}
so that the holomorphicity of $\lambda\mapsto R(\lambda^2)$ near $0$
and $\infty$, along with Lemma~\ref{th:11}, tells us that
$r(0)V_{+}^{\perp}=\hV_{+}^{\perp}$ and
$r(\infty)V_{-}^{\perp}=\hV_{-}^{\perp}$.  Moreover, since $r(0)$ is
an isometry and $V_{-}^{\perp}$ is maximal
isotropic in $V^{\perp}$, we have that $r(0)V_{-}^{\perp}$ is maximal
isotropic in $\hV^{\perp}$ and so must be $\hV_{-}^{\perp}$.  Thus
\begin{equation*}
r(0)=\sum_{|i|\leq 1}\hat{\pi}_ir(0)\pi_{i}
\end{equation*}
and similarly for $r(\infty)$.

However,
\begin{equation*}
\sum_{|i|\leq 1}\hat{\pi}_ir(\lambda)\pi_{i}=
\sum_{|i|\leq 1}\hat{\pi}_i\tau_{\hV}(\lambda)^{-1}
R(\lambda^2)\tau_V(\lambda)\pi_{i}
=\sum_{|i|\leq 1}\hat{\pi}_i
R(\lambda^2)\pi_{i}
\end{equation*}
and evaluating this last at $\lambda=0,\infty$ establishes
\eqref{eq:44}.
\end{proof}

Putting all this together with Theorem \ref{th:5} yields:
\begin{thm}
Let $(V,\nabla^{\mu})$ be the $2$-perturbed harmonic central sphere
congruence of a constrained Willmore surface $\Lambda$ with multiplier
$q$.  Let $(\hV,\hat{\nabla}^{\mu})$ be the untwisted dressing
transform of $(V,\nabla^{\mu})$ by $R$.

Define:
\begin{subequations}
\begin{align}
\label{eq:46}
\hL&=(\pi_{\hV}R(0)\Lambda^{1,0})\cap(\pi_{\hV}R(\infty)\Lambda^{0,1})\\
\label{eq:47}
\hq&=\Ad_{\pi_{\hV}R(\infty)}q^{1,0}+\Ad_{\pi_{\hV}R(0)}q^{0,1}.
\end{align}
\end{subequations}
Then $\hL$ is a constrained Willmore surface with multiplier $\hq$
and central sphere congruence $\hV$ on the open set where it
immerses.
\end{thm}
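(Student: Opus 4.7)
The plan is to reduce the statement directly to its twisted counterpart, Theorem~\ref{th:5}, via the bridge supplied by Proposition~\ref{th:21}. First, set
\[
r(\lambda):=\tau_{\hV}(\lambda)^{-1}\circ R(\lambda^2)\circ \tau_V(\lambda).
\]
Proposition~\ref{th:21} tells us that $r$ is a dressing gauge for the twisted family $(V,d^{\lambda})$ corresponding to $(V,\nabla^{\mu})$ under \eqref{eq:33}, and that the twisted dressing transform of $(V,d^{\lambda})$ by $r$ is precisely $(\hV,\hd^{\lambda})$, where $\hd^{\lambda}$ corresponds to $\hat\nabla^{\mu}$ under \eqref{eq:39}. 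Since $V$ is the central sphere congruence of the constrained Willmore surface $\Lambda$ with multiplier $q$, Theorem~\ref{th:5} applies and produces a constrained Willmore surface
\[
\tilde\Lambda:=r(0)\Lambda^{1,0}\cap r(\infty)\Lambda^{0,1}
\]
with central sphere congruence $\hV$ and multiplier
\[
\tilde q:=\Ad_{r(\infty)}q^{1,0}+\Ad_{r(0)}q^{0,1},
\]
on the open set where $\tilde\Lambda$ immerses.

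It then remains to identify $(\tilde\Lambda,\tilde q)$ with $(\hL,\hq)$, and for this Lemma~\ref{th:20} does all the work. Since $\Lambda^{(1)}\subseteq V$, the projections of Lemma~\ref{th:20} satisfy $\pi_0|_V=\mathrm{id}_V$ and $\pi_{\pm 1}|_V=0$. Hence, for any $\sigma\in\Gamma V$,
\[
r(0)\sigma=\hat\pi_0 R(0)\sigma=\pi_{\hV}R(0)\sigma,
\]
and similarly $r(\infty)\sigma=\pi_{\hV}R(\infty)\sigma$. Applied to $\Lambda^{1,0}$ and $\Lambda^{0,1}$, this yields $\tilde\Lambda=\hL$. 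For the multiplier, $q\in\Omega^1(\Lambda\wedge\Lambda^{(1)})\subseteq\Omega^1(\Wedge^2 V)$, so on each wedge factor $r(0)$ agrees with $\pi_{\hV}R(0)$ and $r(\infty)$ agrees with $\pi_{\hV}R(\infty)$; this gives $\tilde q=\hq$ in the wedge-valued sense in which the right-hand side of \eqref{eq:47} is to be read.

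The only non-routine observation is that the untwisted gauge $R$ canonically produces the twisted gauge $r$ via \eqref{eq:41}, after which all the analytic content (holomorphicity of $\hd^{\lambda}$ on $\C^{\times}$, conformality of $\hL$, existence of a holomorphic quadratic differential, and the characterisation of $\hV$ as a central sphere congruence) is absorbed into Theorem~\ref{th:5}; the present statement is then just the translation of that theorem into untwisted language. The main obstacle is therefore not in the proof itself but in setting up the correct dictionary between $R$ and $r$ and between the projection factors of $r(0),r(\infty)$ and $R(0),R(\infty)$; this has been carried out in Proposition~\ref{th:21} and Lemma~\ref{th:20}, so the conclusion follows essentially immediately.
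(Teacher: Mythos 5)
Your proposal is correct and follows exactly the route the paper intends: the theorem is stated immediately after Proposition~\ref{th:21} and Lemma~\ref{th:20} with the words ``putting all this together with Theorem~\ref{th:5}'', i.e.\ pass to the twisted gauge $r(\lambda)=\tau_{\hV}(\lambda)^{-1}R(\lambda^{2})\tau_V(\lambda)$, invoke Theorem~\ref{th:5}, and use Lemma~\ref{th:20} to replace $r(0),r(\infty)$ on $V$ (hence on $\Lambda^{1,0}$, $\Lambda^{0,1}$ and on the $\Wedge^{2}V$-valued form $q$) by $\pi_{\hV}R(0)$, $\pi_{\hV}R(\infty)$. Your observation that $\pi_{0}|_{V}=\mathrm{id}$, $\pi_{\pm1}|_{V}=0$ is precisely the dictionary the paper relies on, so nothing is missing.
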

\begin{defn}
We say that $\hL$ is the \emph{untwisted dressing transform} of
$\Lambda$ by $R$.
\end{defn}
We shall see below in Proposition~\ref{th:28} and
Corollary~\ref{th:26} that a quaternionic formalism provides cleaner
formulae in which the projections $\pi_{\hV}$ do not intervene.

\begin{rem}
Which dressing gauges $r$ arise from untwisted dressing gauges $R$
via \eqref{eq:41}?  Lemma~\ref{th:20} provides a necessary condition:
$r(0)V_{\pm}^{\perp}=\hV_{\pm}^{\perp}$ and
$r(\infty)V_{\pm}^{\perp}=\hV_{\pm}^{\perp}$.  It is not difficult to
see that this condition is also sufficient by arguing as in
Proposition~\ref{th:21}.

Any dressing gauge has $r(0)V^{\perp}=\hV^{\perp}$ and
$r(0)V_{\pm}^{\perp}$ maximal isotropic in $\hV^{\perp}$ so that the
remaining possibility is that $r(0)V_{\pm}^{\perp}=\hV_{\mp}^{\perp}$ and
$r(\infty)V_{\pm}^{\perp}=\hV_{\mp}^{\perp}$.  This amounts to switching
the orientation of $\hV^{\perp}$ or, equivalently, replacing
$\tau_{\hV}$ by its inverse.  We leave it as an exercise to the
interested reader to work out the analogue of our analysis in this
case.
\end{rem}

\subsection{Untwisted simple factors}
\label{sec:untw-dress-gaug-1}

The lack of $\rho_V$-twisting allows the possibility of untwisted
dressing gauges with just two poles so that, in codimension $2$, we
may dress by simple factors.  For this, let
$(V,d^{\lambda})=(V,\nabla^{\mu})$ be a $k$-perturbed harmonic
bundle.  For $\nu\in\C^{\times}\setminus S^{1}$, let $M$ be a null,
line subbundle such that $M$ and $\overline{M}$ are complementary on
an open set\footnote{This is the open set on which $M$ is not real.}.
Use this data to define gauge transformations $P_{\nu,M}(\mu)$ on
that set by
\begin{equation*}
P_{\nu,M}(\mu)=\Gamma_{\overline{M}}^M\bigl(
\tfrac{(1-\nu^{*})(\mu-\nu)}
{(1-\nu)(\mu-\nu^{*})}\bigr).
\end{equation*}
We have:
\begin{subequations}
\begin{gather}
\label{eq:49}
P_{\nu,M}(1)=1,\\
\label{eq:50}
P_{\nu,M}(\mu^{*})=\overline{P_{\nu,M}(\mu)},
\end{gather}
\end{subequations}
and that $\mu\mapsto P_{\nu,M}(\mu)$ is rational on $\P^1$ and
holomorphic on $\P^1\setminus\set{\nu,\nu^{*}}$.

\begin{thm}
\label{th:23}
Let $(V,\nabla^{\mu})$ be a $k$-perturbed harmonic bundle in $\unC^{6}$,
$\nu\in\C^{\times}\setminus S^{1}$ and $M$ a $\nabla^{\nu}$-parallel
null line subbundle of $\unC^6$.  Assume that $M,\overline{M}$ are
complementary and then that $P_{\nu,M}(\infty)V^{\perp}_{+}$ and
$P_{\nu,M}(0)V^{\perp}_{-}$ are complementary.  Then
$P_{\nu,M}$ is an untwisted dressing gauge for $(V,\nabla^{\mu})$.
\end{thm}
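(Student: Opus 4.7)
The plan is to verify each clause of Definition~\ref{th:19} in turn. The construction has been rigged so that conditions (a) and (c) are essentially built in, and the real content lies in the holomorphic extension of the gauged connection across the poles $\nu$ and $\nu^{*}$, which we handle with Lemma~\ref{th:7} together with the reality of $\nabla^{\mu}$.

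First I would note that the M\"obius transformation
$\psi(\mu)=\tfrac{(1-\nu^{*})(\mu-\nu)}{(1-\nu)(\mu-\nu^{*})}$ has its zero at $\nu$ and its pole at $\nu^{*}$, both of which lie in $\C^{\times}$. Consequently $\mu\mapsto P_{\nu,M}(\mu)$ is rational on $\P^{1}$ and holomorphic on $\P^{1}\setminus\{\nu,\nu^{*}\}$; in particular it is holomorphic at $0$ and $\infty$. The reality condition \eqref{eq:38} is exactly \eqref{eq:50}, which in turn follows from the identity $\psi(\mu^{*})=1/\overline{\psi(\mu)}$ together with the symmetry $\Gamma^{\overline{M}}_{M}(z)=\Gamma^{M}_{\overline{M}}(1/z)$. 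Condition (c), the complementarity of $P_{\nu,M}(0)V^{\perp}_{-}$ and $P_{\nu,M}(\infty)V^{\perp}_{+}$, is one of the standing hypotheses.

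The main step is to check condition (b), that $P_{\nu,M}(\mu)\cdot\nabla^{\mu}$ extends holomorphically across $\nu$ and $\nu^{*}$ to a family $\hat\nabla^{\mu}$ on $\C^{\times}$, with $\hat\nabla^{1}=d$. The normalisation $\hat\nabla^{1}=d$ is immediate from \eqref{eq:49} and $\nabla^{1}=d$. For the extension across $\nu$, Lemma~\ref{th:7} applied with $L^{+}=M$, $L^{-}=\overline{M}$ and $(\alpha,\beta)=(\nu,\nu^{*})$ shows that $\Gamma^{M}_{\overline{M}}(\psi(\mu))\cdot\nabla^{\mu}$ extends holomorphically at $\nu$ precisely because $M$ is $\nabla^{\nu}$-parallel by hypothesis. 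The slightly more delicate extension across $\nu^{*}$ is the part I would expect to require the most care: the second half of Lemma~\ref{th:7} demands that $\overline{M}$ be $\nabla^{\nu^{*}}$-parallel. Here I invoke the reality \eqref{eq:36} of the untwisted family, which gives $\nabla^{\nu^{*}}=\overline{\nabla^{\nu}}$; since complex conjugation intertwines parallelism, the fact that $M$ is $\nabla^{\nu}$-parallel implies that $\overline{M}$ is $\nabla^{\nu^{*}}$-parallel, and Lemma~\ref{th:7} again applies.

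Putting these pieces together, $P_{\nu,M}$ satisfies all of (a)--(c) of Definition~\ref{th:19}, so it is an untwisted dressing gauge for $(V,\nabla^{\mu})$. The only genuinely substantive input is the reality trick that converts the hypothesis at $\nu$ into the required parallelism at $\nu^{*}$; everything else is a direct reading of the definitions.
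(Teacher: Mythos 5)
Your proposal is correct and follows essentially the same route as the paper: verify Definition~\ref{th:19}(a)--(c) directly, with the only substantive step being the two applications of Lemma~\ref{th:7}, using the hypothesis that $M$ is $\nabla^{\nu}$-parallel at $\nu$ and the reality of the family (so that $\overline{M}$ is $\nabla^{\nu^{*}}$-parallel) at $\nu^{*}$. Your explicit appeal to \eqref{eq:36} for the latter is exactly the intended argument.
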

\begin{proof}
We have already seen that $\mu\mapsto P_{\nu,M}(\mu)$ is holomorphic
on $\P^1\setminus\set{\nu,\nu^{*}}$ and so, in particular, near
$0$ and $\infty$.  Moreover, \eqref{eq:50} is precisely the reality
condition \eqref{eq:38} of Definition~\ref{th:19}.  

We have that $M$ is $\nabla^{\nu}$-parallel, whence, thanks to
\eqref{eq:50}, $\overline{M}$ is $\nabla^{\nu^{*}}$-parallel.  It
is now immediate from Lemma~\ref{th:7} that $\mu\mapsto
P_{\nu,M}(\mu)\cdot\nabla^{\mu}$ extends holomorphically to
$\C^{\times}$ settling Definition~\ref{th:19}(b).

Finally,  Definition~\ref{th:19}(c) holds by hypothesis.
\end{proof}

In fact, the dressing transformation induced by these simple factor
dressing gauges are generically B\"acklund transforms as in
Section~\ref{sec:backl-transf}.  Indeed, if $M$ is
$\nabla^{\nu}$-parallel and $\alpha=\pm\sqrt{\nu}$, then
\eqref{eq:39} tells us that $L:=\tau_V(\alpha)^{-1}M$ is
$d^{\alpha}$-parallel.  Thus $\alpha,L$ are parameters for a
B\"acklund transform of $(V,d^{\lambda})$ which turns out to have the
same effect as dressing by $P_{\nu,M}$.  In more detail:
\begin{prop}\label{th:24}
Let $(V,\nabla^{\mu})$ be a $k$-perturbed harmonic bundle in
$\unC^6$, $\nu\in\C^{\times}\setminus S^1$ and $M$ a
$\nabla^{\nu}$-parallel null line subbundle satisfying the hypotheses
of Theorem~\ref{th:23}.  Let $(\hV,\hat{\nabla}^{\mu})$ be the
dressing transform of $(V,\nabla^{\mu})$ by $P_{\nu,M}$.

Let $\alpha=\pm\sqrt{\nu}$ and set $L=\tau_V(\alpha)^{-1}M$.  Assume
that $L$ satisfies the hypotheses of Theorem~\ref{th:8}: thus
$L,\rho_V L$ are complementary and, with
$L'=p^{V}_{\alpha,L}(\alpha^{*})\bar{L}$ and
$V'=p^V_{\alpha,L}(0)V$, $L',\rho_{V'}L'$ are also complementary.

Then
\begin{equation}
\label{eq:45}
p^{V'}_{\alpha^{*},L'}(\lambda)p^V_{\alpha,L}(\lambda)=
\tau_{\hV}(\lambda)^{-1}P_{\nu,M}(\lambda^2)\tau_V(\lambda),
\end{equation}
for all $\lambda$.
\end{prop}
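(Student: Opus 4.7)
The plan is to prove \eqref{eq:45} by the permutability trick used throughout the paper (compare Theorem~\ref{th:8} and Lemma~\ref{th:15}). Set
\[
\Pi(\lambda):=p^{V'}_{\alpha^*,L'}(\lambda)\,p^V_{\alpha,L}(\lambda)\,\tau_V(\lambda)^{-1}\,P_{\nu,M}(\lambda^2)^{-1}\,\tau_{\hV}(\lambda).
\]
By \eqref{eq:20}, \eqref{eq:49}, and $\tau_V(1)=\tau_{\hV}(1)=1$, every factor is the identity at $\lambda=1$, so $\Pi(1)=1$. Once $\Pi$ is shown to be holomorphic on all of $\P^{1}$, Liouville forces $\Pi\equiv 1$, which is exactly \eqref{eq:45}.

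Holomorphicity is manifest off $\set{0,\infty,\pm\alpha,\pm\alpha^{*}}$. At $0$ and $\infty$ the two leftmost factors are products of simple factors hence holomorphic there, while $\tau_V(\lambda)^{-1}P_{\nu,M}(\lambda^2)^{-1}\tau_{\hV}(\lambda)$ is holomorphic by Lemma~\ref{th:11}, whose hypotheses are supplied by the orientation convention \eqref{eq:42}: $P_{\nu,M}(0)V^{\perp}_{-}=\hV^{\perp}_{-}$ and $P_{\nu,M}(\infty)V^{\perp}_{+}=\hV^{\perp}_{+}$. For the remaining singular points I factor the untwisted simple factor as
\[
P_{\nu,M}(\lambda^2)=\Gamma^M_{\bar M}(\chi_1(\lambda))\,\Gamma^M_{\bar M}(\chi_2(\lambda)),
\]
where $\chi_1(\lambda)=\tfrac{(1-\alpha^{*})(\lambda-\alpha)}{(1-\alpha)(\lambda-\alpha^{*})}$ and $\chi_2(\lambda)=\tfrac{(1+\alpha^{*})(\lambda+\alpha)}{(1+\alpha)(\lambda+\alpha^{*})}$ are normalised so that $\chi_j(1)=1$ and satisfy $\chi_1\chi_2=\psi_M(\lambda^2)$ by a direct computation.

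Write $p^V_{\alpha,L}(\lambda)=\Gamma^L_{\rho_VL}(\psi_L(\lambda))$ with $\psi_L(\lambda)=\tfrac{(1+\alpha)(\lambda-\alpha)}{(1-\alpha)(\lambda+\alpha)}$. Near $\lambda=\alpha$ only $p^V_{\alpha,L}$ and $\Gamma^M_{\bar M}(\chi_1)$ in $\Pi$ are singular; since $\alpha\notin S^{1}$ gives $\alpha\neq\alpha^{*}$, the ratio $h:=\chi_1/\psi_L$ is holomorphic and nowhere zero at $\alpha$, and the splitting $\Gamma^M_{\bar M}(\chi_1)^{-1}=\Gamma^M_{\bar M}(h^{-1})\,\Gamma^{\bar M}_M(\psi_L)$ absorbs the $h$-part into a regular central factor. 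The question then reduces to holomorphicity at $\alpha$ of
\[
\Gamma^L_{\rho_VL}(\psi_L(\lambda))\,\tilde E(\lambda)\,\Gamma^{\bar M}_M(\psi_L(\lambda)),
\]
with $\tilde E(\lambda)=\tau_V(\lambda)^{-1}\Gamma^M_{\bar M}(h(\lambda)^{-1})$ holomorphic at $\alpha$. This is precisely the template of Lemma~\ref{th:11} (with $\hat\ell^{+}=L$, $\hat\ell^{-}=\rho_VL$, $\ell^{+}=M$, $\ell^{-}=\bar M$), and the required hypothesis $\tilde E(\alpha)M=L$ unwinds to $\tau_V(\alpha)^{-1}M=L$, i.e.\ $M=\tau_V(\alpha)L$, which is built into the statement. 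The case $\lambda=-\alpha$ is analogous: use $\tilde\psi_L=\psi_L^{-1}$ (zero at $-\alpha$), factor $\chi_2=\tilde\psi_L\cdot h'$ with $h'$ holomorphic non-zero at $-\alpha$, and apply Lemma~\ref{th:11} with $\tilde\psi_L$; the resulting hypothesis becomes $\tau_V(-\alpha)^{-1}M=\rho_VL$, which follows from $M=\tau_V(\alpha)L$ together with $\tau_V(-\alpha)^{-1}\tau_V(\alpha)=\tau_V(-1)=\rho_V$. Finally, $\Pi$ satisfies $\overline{\Pi(\lambda^{*})}=\Pi(\lambda)$, because each factor is real in the appropriate sense (for the two $p$'s by \eqref{eq:19} together with the reality of $r$ established in the proof of Theorem~\ref{th:8}, for $P_{\nu,M}$ by \eqref{eq:50}, and for $\tau_V,\tau_{\hV}$ because $\overline{V_{\pm}^{\perp}}=V_{\mp}^{\perp}$). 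Reality then promotes holomorphicity at $\pm\alpha$ to holomorphicity at $\pm\alpha^{*}$, completing the argument.

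The main obstacle is the bookkeeping around $\lambda=\alpha$: factoring $P_{\nu,M}(\lambda^2)$ into the right linear-fractional pieces, isolating the regular $h$-part, and rearranging the product to expose a three-factor configuration fitting Lemma~\ref{th:11} with $E=\tau_V$. Once this is in place, the subspace identification required is exactly the defining relation $M=\tau_V(\alpha)L$.
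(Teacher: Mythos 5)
Your proposal is correct and follows essentially the same route as the paper: both consider the quotient of the two sides of \eqref{eq:45}, show it is holomorphic on $\P^1$ (at $0,\infty$ via Lemma~\ref{th:11} and \eqref{eq:42} as in Proposition~\ref{th:21}, at $\pm\alpha$ by splitting $P_{\nu,M}(\lambda^2)$ into two $\Gamma^M_{\overline{M}}$-factors and applying Lemma~\ref{th:11} with the identities $M=\tau_V(\alpha)L$ and $\tau_V(-\alpha)\rho_VL=M$, and at $\pm\alpha^{*}$ by reality), and conclude by constancy and evaluation at $\lambda=1$. Your explicit $h$-factor bookkeeping just makes precise the harmless mismatch of linear fractional transformations that the paper glosses over when invoking Lemma~\ref{th:11}.
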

Thus, Proposition~\ref{th:21} tell us that $(\hV,\hd^{\lambda})$ is
the B\"acklund transform of $(V,d^{\lambda})$ with parameters
$\alpha,L$.
\begin{proof}
Set
\begin{align*}
L(\lambda)&:=p^{V'}_{\alpha^{*},L'}(\lambda)p^V_{\alpha,L}(\lambda)\\
R(\lambda)&:=\tau_{\hV}(\lambda)^{-1}P_{\nu,M}(\lambda^2)\tau_V(\lambda).
\end{align*}
We follow what should now be a familiar strategy: we use
Lemma~\ref{th:11} to see that $\lambda\mapsto
R(\lambda)L(\lambda)^{-1}$ is holomorphic on $\P^1$ and so constant.

Both $L(\lambda)$ and $R(\lambda)$ are holomorphic on
$\P^1\setminus{\pm\alpha,\pm\alpha^{*}}$, $L(\lambda)$ by construction and
$R(\lambda)$ by Proposition~\ref{th:21}.  Moreover, both $R,L$
satisfy the reality condition so it suffices to show holomorphicity
of $RL^{-1}$ at $\pm\alpha$.  Now the part of this product with poles
at $\pm\alpha$ is
\begin{subequations}
\begin{align}
\label{eq:51}
\Gamma^M_{\overline{M}}
\bigl(\tfrac{\lambda^2-\alpha^2}{\lambda^2-1/\bar{\alpha}^2}\bigr)
\tau_{V}(\lambda)
\Gamma_{\rho_V
L}^L\bigl(\tfrac{\lambda-\alpha}{\lambda+\alpha}\bigr)^{-1}&=
\Gamma^M_{\overline{M}}
\bigl(\tfrac{\lambda+\alpha}{\lambda+\alpha^{*}}\bigr)
\Gamma^M_{\overline{M}}
\bigl(\tfrac{\lambda-\alpha}{\lambda-\alpha^{*}}\bigr)
\tau_{V}(\lambda)
\Gamma_{\rho_V
L}^L\bigl(\tfrac{\lambda-\alpha}{\lambda+\alpha}\bigr)^{-1}\\
&=
\Gamma^M_{\overline{M}}
\bigl(\tfrac{\lambda-\alpha}{\lambda-\alpha^{*}}\bigr)
\Gamma^M_{\overline{M}}
\bigl(\tfrac{\lambda+\alpha}{\lambda+\alpha^{*}}\bigr)
\tau_{V}(\lambda)
\Gamma^{\rho_V
L}_{L}\bigl(\tfrac{\lambda+\alpha}{\lambda-\alpha}\bigr)^{-1}.
\label{eq:53}
\end{align}
\end{subequations}
We have $M=\tau_V(\alpha)L$ and this, together with
Lemma~\ref{th:11} applied to the right hand side of \eqref{eq:51},
gives holomorphicity at $\alpha$.  Again,
$\tau_V(-\alpha)\rho_{V}L=\tau_V(\alpha)L=M$ so that
Lemma~\ref{th:11} applies to \eqref{eq:53} to give holomorphicity at
$-\alpha$.
\end{proof}

\subsection{Quaternionic formalism}
\label{sec:riccati-equation}

The Klein correspondence offers another viewpoint on the conformal
geometry of $S^4$ which has been heavily exploited by Pedit, Pinkall
and their collaborators
\cite{Boh10,BurFerLesPed02,FerLesPedPin01,BohPedPin09a}.  We rehearse
the basics of this viewpoint with a view to extending the
transformation theory of \cite[Chapter~12]{BurFerLesPed02} and
relating it to our dressing transformations in
section~\ref{sec:riccati-equation-1}.

We begin with a $2$-dimensional quaternionic vector space which we
view as $\C^4$ equipped with a quaternionic structure $j$: thus
$j:\C^4\to\C^4$ is anti-linear and $j^2=-1$.  Now fix
$\det\in\Wedge^{2}(\C^4)^{*}$ with
\begin{equation*}
j^{*}\det=\overline{\det},\qquad \det(v_1\wedge v_2\wedge jv_1\wedge jv_2)>0,
\end{equation*}
when $v_1\wedge v_2\wedge jv_1\wedge jv_2\neq 0$ $v_i\in\C^4$.  This
data equips $\C^6:=\Wedge^2\C^4$ with a real structure
$\overline{v_1\wedge v_2}:=jv_1\wedge jv_2$ and an inner product
$(\xi,\eta):=\det(\xi\wedge\eta)$ of signature $(5,1)$.

Set $\rSL(\H^2)=\set{g\in\rSL(\C^4):jg=gj}$.  The action of
$\rSL(\H^2)$ on $\Wedge^2\C^{4}$ induces a map
$\rSL(\H^2)\to\rSO(5,1)$ which is a double cover.  Differentiating
gives an isomorphism of Lie algebras $\fsl(\H^2)\cong\fo(\R^{5,1})$
where $\fsl(\H^2)=\set{A\in\fsl(\C^4):[A,j]=0}$.

The Klein correspondence identifies the Grassmannian of $2$-planes in
$\C^4$ with the quadric in $\P(\C^6)$ defined by our inner product:
$W\mapsto\Wedge^2 W$.  Clearly, $j$-stable $2$-planes (thus
$1$-dimensional quaternionic subspaces) are identified with points of
$\P(\cL)=S^4$ yielding the celebrated isomorphism $\H P^1\cong S^4$.

Under this correspondence, oriented $2$-spheres are identified with
$2$-planes $S^{+}\leq\C^4$ which are not $j$-stable: for such an
$S^{+}$, set $S^{-}=jS^{+}$ and then $\C^4=S^{+}\oplus S^{-}$ and the
corresponding $(3,1)$-plane $V$ in $\R^{5,1}$ is given by:
\begin{equation*}
V=S^{+}\wedge S^{-},\qquad
V^{\perp}_{\pm}=\Wedge^2 S^{\pm}.
\end{equation*}
Equivalently, such $S^{+}$ correspond bijectively to $S\in\rSL(\H^2)$
with $S^2=-1$ via $S=\pm i$ on $S^{\pm}$.

Thus, a bundle $V$ of oriented $(3,1)$-planes amounts to
$S\in\Gamma\rSL(\underline{\H}^2) $ with $S^2=-1$ and $\pm
i$-eigenbundles $S^{\pm}$ so that $V^{\perp}_{\pm}=\Wedge^2S^{\pm}$.
The corresponding decomposition $\unC^4=S^{+}\oplus S^{-1}$ induces a
decomposition of the flat connection
\begin{equation*}
d=\cD_S+\cN_{S}
\end{equation*}
with $\cD_SS=0$ and $\cN_S$ anti-commuting with $S$:
$\{\cN_S,S\}=0$.  Using $V=S^{+}\wedge S^{-}$, we readily see that
$\cD_S$ induces the connection $\cD_V$ on $\unC^6=\Wedge^2\unC^4$ and
that $\cN_S$ coincides with $\cN_V$ under the isomorphism
$\fsl(\H^2)\cong\fo(5,1)$.  Again, the subspace $V\wedge V^{\perp}$
of $\fo(5,1)$ corresponds to $\set{A\in\fsl(\H^2):\{A,S\}=0}$ with
the almost complex structure $J_V$ corresponding to post-composition
with $S$.  Thus the decomposition \eqref{eq:52} of $\cN_V$ is
identified with the decomposition
\begin{equation*}
\cN_S=\cA_S+\cQ_S
\end{equation*}
of \cite[\S5.1]{BurFerLesPed02} where
\begin{subequations}\label{eq:68}
\begin{align}
\label{eq:54}
*\cA_S&=S\cA_S=-\cA_SS\\
\label{eq:55}
*\cQ_S&=-S\cQ_S=\cQ_SS.
\end{align}
\end{subequations}
Here, and below, we follow \cite{BurFerLesPed02} by setting $*\alpha=\alpha\circ
J^{\Sigma}$, for $\alpha\in\Omega^1_{\Sigma}$.  Thus our $*$ is minus
that of Hodge.

We note the following simple consequences of this analysis
\cite[equations (5.2) and (5.11)]{BurFerLesPed02}:
\begin{subequations}
\begin{align}
\label{eq:56}
dS&=2(*\cQ_S-*\cA_S)\\
\label{eq:57}
0&=\cA_S\wedge\cQ_S\\
\label{eq:58}
0&=\cQ_S\wedge\cA_S,
\end{align}
\end{subequations}
where coefficients in the wedge products are multiplied using
composition in $\End(\C^4)$.

Now let $\Lambda$ be a conformal immersion with oriented central
sphere congruence $V$.  Then $\Lambda$ corresponds to a quaternionic
line subbundle of $\unC^4$, thus a $j$-stable, rank $2$ bundle
$L\leq\unC^4$ with $\Wedge^2L=\Lambda$.  To the central sphere
congruence $V$ corresponds $S\in\Gamma\rSL(\underline{\H}^2)$
characterised by the following conditions \cite[\S5.2,
Theorem~2]{BurFerLesPed02}:
\begin{subequations}
\begin{gather}
\label{eq:59}
SL=L,\quad dSL\leq T^{*}\Sigma\otimes L\\
\label{eq:60}
*\delta=S\delta=\delta S\\
\cQ_{S}L=0,\label{eq:61}
\end{gather}
\end{subequations}
where $\delta=\pi\circ d$ for $\pi:\unC^4\to\unC^4/L$ the projection
away from $L$. 

From \eqref{eq:59}, we see that $L=L^{+}\oplus L^{-}$ where
$L^{\pm}=L\cap S^{\pm}$ and $L^{-}=j L^{+}$.  Thus
$\Lambda=L^{+}\wedge L^{-}$.  We remark that $L^{\pm}$, viewed as
maps $\Sigma\to\C P^3$ are the two twistor lifts of $\Lambda$.
Moreover, from \eqref{eq:60}, we readily compute that:
\begin{equation*}
\Lambda^{1,0}=S^{+}\wedge L^{-},\qquad \Lambda^{0,1}=L^{+}\wedge S^{-}.
\end{equation*}

We have therefore established a dictionary between subbundles of
$\unC^4$ and $\unC^6=\Wedge^2\unC^4$ which we shall use without
further comment.  Thus we can (and will!) talk of conformal
immersions $L$ with central sphere congruence $S$.

With a view to working with constrained Willmore surfaces in this
formalism, we list the algebraic properties of a Lagrange multiplier
$q$ when viewed as an $\fsl(\H^2)$-valued $1$-form.
\begin{lemma}
\label{th:25}
Let $\Lambda=\Wedge^{2}L$ be a conformal immersion with central sphere congruence
$S$.  Let $q\in\Omega^1\otimes\fo(5,1)$.
Then $q^{1,0}$ takes values in $\Lambda\wedge\Lambda^{0,1}$ if and
only if, when viewed as a $\fsl(\H^2)$-valued $1$-form, we have
\begin{subequations}\label{eq:66}
\begin{gather}
\label{eq:64}
q\in\Omega^1(\End(\H^2/L,L))\\
*q=Sq=qS.\label{eq:65}
\end{gather}
\end{subequations}
\end{lemma}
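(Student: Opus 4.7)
The plan is to work out explicitly how the subbundle $\Lambda\wedge\Lambda^{0,1}\subset\fo(\unC^6)$ sits inside $\fsl(\H^2)$ under the Klein isomorphism $\fo(\unC^6)\cong\fsl(\unC^4)$, and then to translate the three algebraic conditions on $q$.  I will first establish the following characterisation: under this isomorphism, $\Lambda\wedge\Lambda^{0,1}$ corresponds to $\{A\in\fsl(\H^2):AL=0,\ A|_{S^-}=0,\ \mathrm{im}\,A\subseteq L^+\}$.  To see this, pick a local frame $\ell^+,e^+,\ell^-,s^-$ of $\unC^4$ adapted to the flags $L^+\subset S^+$, $L^-\subset S^-$ with $\det(\ell^+\wedge e^+\wedge\ell^-\wedge s^-)=1$.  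A generator of $\Lambda\wedge\Lambda^{0,1}$ is $\lambda\wedge\mu$ with $\lambda=\ell^+\wedge\ell^-\in\Lambda$ and $\mu=\ell^+\wedge s^-\in\Lambda^{0,1}$; computing the endomorphism $w\mapsto(\lambda,w)\mu-(\mu,w)\lambda$ on the basis of $\Wedge^2\unC^4$ and matching it against the Leibniz action $A\cdot(v_1\wedge v_2)=Av_1\wedge v_2+v_1\wedge Av_2$ of an unknown $A\in\fsl(\C^4)$ yields $Ae^+=-\ell^+$ with $A$ vanishing on $\ell^+,\ell^-,s^-$.  This gives exactly the stated properties, and complex conjugation furnishes the mirror statement: $\Lambda\wedge\Lambda^{1,0}$ corresponds to $\{A:AL=0,\ A|_{S^+}=0,\ \mathrm{im}\,A\subseteq L^-\}$.

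For the forward direction, assume $q^{1,0}$ takes values in $\Lambda\wedge\Lambda^{0,1}$; reality of $q$ implies $q^{0,1}$ takes values in $\Lambda\wedge\Lambda^{1,0}$.  Summing the vanishing and image conditions gives $qL=0$ and $\mathrm{im}\,q\subseteq L^++L^-=L$, establishing \eqref{eq:64}.  For \eqref{eq:65}, the convention $*\alpha=\alpha\circ J^\Sigma$ yields $*q=iq^{1,0}-iq^{0,1}$.  Since $S$ acts as $\pm i$ on $L^\pm$ and $\mathrm{im}\,q^{1,0}\subseteq L^+$, $\mathrm{im}\,q^{0,1}\subseteq L^-$, one computes $Sq=iq^{1,0}-iq^{0,1}=*q$; using additionally that $q^{1,0}$ vanishes on $S^-$ and $q^{0,1}$ vanishes on $S^+$, the same expression equals $qS$.

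For the converse, the hypothesis $\mathrm{im}\,q\subseteq L=L^+\oplus L^-$ allows a finer decomposition $q^{a,b}=(q^{a,b})^++(q^{a,b})^-$ by target direction.  Expanding $Sq=*q$ and using that $S$ acts as $\pm i$ on $L^\pm$ forces $(q^{1,0})^-=(q^{0,1})^+$; reality identifies the right-hand side as the complex conjugate of the left, and since a $(1,0)$-form is real only if zero, both pieces vanish.  Hence $q^{1,0}$ has image in $L^+$ and $q^{0,1}$ in $L^-$.  Next, expanding $qS=*q$ on vectors in $S^+$ and $S^-$ (where $S$ acts as $\pm i$) forces $q^{0,1}|_{S^+}=0$ and $q^{1,0}|_{S^-}=0$.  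Combined with $q^{1,0}L=0$ (which follows from $qL=0$ by separation of types on $\Sigma$), the characterisation of the first paragraph places $q^{1,0}\in\Lambda\wedge\Lambda^{0,1}$ as required.  The main obstacle is the initial Klein-correspondence calculation in a basis; once that is settled, the remainder is routine linear algebra on the joint eigenspaces of $S$ and $J^\Sigma$.
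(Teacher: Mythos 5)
Your argument is correct, but it takes a genuinely different route from the paper's. The paper never writes down the correspondent of $\Lambda\wedge\Lambda^{0,1}$ inside $\fsl(\unC^4)$: for the forward implication it notes that values in $\Lambda\wedge\Lambda^{(1)}$ force $q$ to preserve $V$ and kill $V^{\perp}$, hence to commute with $S$ and to be trace-free on each of $S^{\pm}$ (because it kills $\Wedge^2S^{\pm}$), and then extracts the kernel and image conditions from $q_Z(\sigma^{+}\wedge s^{-})=0$ by an eigenvalue-plus-trace argument, leaving the converse to the reader as straightforward. You instead compute, in an adapted frame, the preimage of the line bundle $\Lambda\wedge\Lambda^{0,1}$ under the Lie-algebra isomorphism, arriving at the fibrewise description $\set{A: A(L^{+}\oplus S^{-})=0,\ \mathrm{im}\,A\subseteq L^{+}}$ (both spaces are lines, so hitting one nonzero generator does give equality), after which both implications become routine bookkeeping in the joint eigenspaces of $S$ and $J^{\Sigma}$ --- and you actually carry out the converse, which the paper omits. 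Your route buys an explicit, reusable dictionary entry and a complete two-way proof at the cost of a basis computation; the paper's buys frame-independence and brevity in the forward direction. Two minor remarks: in your stated characterisation the ambient algebra should be $\fsl(\unC^4)$ (as in your opening sentence), not $\fsl(\H^2)$ --- an endomorphism commuting with $j$ and satisfying those one-sided conditions is zero --- and in the converse the identity $(q^{1,0})^{-}=(q^{0,1})^{+}$ already forces both sides to vanish, since the two sides lie in different summands of the type decomposition of $T^{*}\Sigma\otimes\End(\unC^4)$, so your reality argument, though correct, is not needed.
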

\begin{proof}
Suppose that $q^{1,0}$ takes values in $\Lambda\wedge\Lambda^{0,1}$,
or, equivalently, $q$ preserves $V$ and annihilates
$V^{\perp}$ while $q^{1,0}$ additionally annihilates $\Lambda^{0,1}$.
The first two of these imply
that $qS=Sq$ so that $q$ preserves $S^{\pm}$.
Moreover, $qV^{\perp}=0$ means that $q\Wedge^2S^{\pm}=0$ so
that $\trace(q_{|S^{\pm}})=0$.

Let $Z\in T^{1,0}\Sigma$, $\sigma^{+}\in L^{+}$ and $s^{-}\in
S^{-}$.  Then 
\begin{equation*}
q_{Z}(\sigma^{+}\wedge s^{-})=
(q_{Z}\sigma^{+})\wedge s^{-}+\sigma^{+}\wedge (q_{Z}s^{-})=0.
\end{equation*}
It follows at once that $L^{+}$ is an eigenspace of $q_Z$ with
eigenvalue $\lambda$, say, and then that $q_{Z}=-\lambda$ on
$S^{-}$.  Since $q_Z$ is trace-free on $S^{-}$, we immediately get
that $q_Z$ annihilates $S^{-}$ (so that $*q=qS$) and also $L^{+}$ and
so $L$.  Finally, choose $s^{+}\in S^{+}$ so that $\sigma^{+}\wedge
s^{+}\neq 0$.  Then $0=q_Z(\sigma^{+}\wedge s^{+})=\sigma^{+}\wedge
(q_Zs^{+})$ so that $q_ZS^{+}\leq L^{+}$.  This establishes
equations \eqref{eq:66}.  The converse is straightforward.
\end{proof}

Let $R$ be a dressing gauge for a $k$-perturbed harmonic
$(S,\nabla^{\mu})$.  We wish to describe its effect in quaternionic
terms for which we need a little notation: for $g\in\rO(\C^6)$, write
$\widetilde{g}$ for a preimage of $g$ under the double covering
$\rSL(\C^4)\to\rO(\C^6)$.  Thus
\begin{equation*}
g(v\wedge w)=(\widetilde{g}v)\wedge(\widetilde{g}w),
\end{equation*}
for $v,w\in\C^{4}$.  Clearly $\widetilde{g}$ is determined up to sign
by $g$ and has an unambiguous projective action on $\C^4$.

With this in hand, we have:
\begin{prop}\label{th:28}
Let $(S,\nabla^{\mu})$ be $k$-perturbed harmonic in $\unC^{6}$.  Let $R$ be an
untwisted dressing gauge for $(S,\nabla^{\mu})$ and
$(\hS,\hat{\nabla}^{\mu})$ the untwisted dressing transform by $R$.  Then
\begin{subequations}\label{eq:71}
\begin{align}\label{eq:69}
\hat{S}^{+}&=\widetilde{R}(\infty)S^{+}&\hat{S}^{-}&=\widetilde{R}(0)S^{-}
\end{align}
Moreover, if $S$ is the central sphere congruence of a constrained
Willmore surface $L$ with multiplier $q$, then the dressing transform
$\hS$ and its
multiplier $\hat{q}$ are given by:
\begin{align}\label{eq:70}
\hat{L}^{+}&=\widetilde{R}(\infty)L^{+}&\hat{L}^{-}&=\widetilde{R}(0)S^{-}\\
\label{eq:74}\hq_{|\hat{S}^{+}}
&=\widetilde{R}(\infty) q\widetilde{R}(\infty)^{-1}_{|\hat{S}^{+}}&
\hq_{|\hat{S}^{-}}&=\widetilde{R}(0) q\widetilde{R}(0)^{-1}_{|\hat{S}^{-}}
\end{align}
\end{subequations}
\end{prop}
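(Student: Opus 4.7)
The plan is to translate each assertion in \eqref{eq:71} from the bivector formalism of Section~\ref{sec:untw-dress-gaug} into the quaternionic one, exploiting the Klein correspondence and the infrastructure already built. The first assertion \eqref{eq:69} is immediate: definition \eqref{eq:42} reads $\hV^{\perp}_{+} = R(\infty)V^{\perp}_{+}$ and $\hV^{\perp}_{-} = R(0)V^{\perp}_{-}$, and since $V^{\perp}_{\pm} = \Wedge^{2}S^{\pm}$ with $R(\cdot)$ acting on bivectors as the action induced by $\widetilde{R}(\cdot)$ on $\unC^{4}$, both sides decompose as $\Wedge^{2}$ of a single $2$-plane, and the formulas for $\hS^{\pm}$ drop out of the bijection between non-$j$-stable $2$-planes and their associated null lines in $\unC^{6}$.

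For \eqref{eq:70}, I would begin from Theorem~\ref{th:5}, which provides $\hL^{1,0} = r(0)\Lambda^{1,0}$ and $\hL^{0,1} = r(\infty)\Lambda^{0,1}$, with $r$ the twisted dressing gauge built from $R$ via \eqref{eq:41}. In the quaternionic formalism $\Lambda^{1,0} = S^{+}\wedge L^{-}$ lies in $V$ (because $L^{-}\subset S^{-}$), so Lemma~\ref{th:20} collapses $r(0)|_{\Lambda^{1,0}}$ to $\hat{\pi}_{0}R(0)|_{\Lambda^{1,0}}$. Expanding $R(0)(S^{+}\wedge L^{-}) = \widetilde{R}(0)S^{+}\wedge\widetilde{R}(0)L^{-}$, with $\widetilde{R}(0)L^{-}\subset \widetilde{R}(0)S^{-} = \hS^{-}$, the projection $\hat{\pi}_{0}$ onto $\hV = \hS^{+}\wedge\hS^{-}$ reduces to projecting the first factor onto $\hS^{+}$ along $\hS^{-}$, which is an isomorphism since $\widetilde{R}(0)(S^{+}\oplus S^{-}) = \unC^{4} = \hS^{+}\oplus\hS^{-}$. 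Matching the result with the general identity $\hL^{1,0} = \hS^{+}\wedge\hL^{-}$ forces $\hL^{-} = \widetilde{R}(0)L^{-}$; the formula for $\hL^{+}$ follows from the symmetric analysis of $\hL^{0,1} = L^{+}\wedge S^{-}$.

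For \eqref{eq:74}, my approach rests on the refined type-decomposition of $q$ that Lemma~\ref{th:25} provides: the relations $*q = Sq = qS$, combined with the sign convention $*\alpha = \alpha\circ J^{\Sigma}$, force $q^{1,0}$ to annihilate $S^{-}$ and take values in $L^{+}$, and $q^{0,1}$ to annihilate $S^{+}$ and take values in $L^{-}$. Beginning from $\hq = \Ad_{r(\infty)}q^{1,0} + \Ad_{r(0)}q^{0,1}$, I would lift via the double cover $\rSL(\H^{2})\to\rSO(5,1)$ and observe that the block-diagonal formula of Lemma~\ref{th:20} makes $\tilde{r}(\infty)$ agree with $\widetilde{R}(\infty)$ on $S^{+}$ (since the $\hat{\pi}_{1}R(\infty)\pi_{1}$ block is the identity there, as $R(\infty)V^{\perp}_{+} = \hV^{\perp}_{+}$ already) and $\tilde{r}(0)$ agree with $\widetilde{R}(0)$ on $S^{-}$. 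For $\hs^{+}\in\hS^{+}$ the $q^{0,1}$-contribution then vanishes because $\tilde{r}(0)^{-1}\hs^{+}\in S^{+}$, while the $q^{1,0}$-contribution reduces to $\widetilde{R}(\infty)q\widetilde{R}(\infty)^{-1}\hs^{+}$ (using $q|_{S^{+}} = q^{1,0}|_{S^{+}}$). The mirror computation on $\hS^{-}$ yields the second identity.

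The main technical obstacle is the book-keeping in the bivector-to-vector translation of step two: one must verify both that the off-block terms $\hat{\pi}_{\pm 1}R(0)\pi_{0}$ from Lemma~\ref{th:20} vanish on $\Lambda^{1,0}$ and that the complementarity of Definition~\ref{th:19}(c) makes the projection $\pi_{\hS^{+}}\widetilde{R}(0)|_{S^{+}}$ an isomorphism onto $\hS^{+}$, with a parallel issue for $\tilde{r}(\infty)|_{S^{+}}$ in step three. Both facts rest on the invertibility of $\widetilde{R}$ on $\unC^{4}$ and the complementarity hypotheses, but deserve careful tracking, especially to keep the sign ambiguities in the double cover $\rSL(\H^{2})\to\rSO(5,1)$ from entangling the formulas.
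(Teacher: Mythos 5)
Your proposal is correct and, in substance, it is the paper's own argument: \eqref{eq:69} is read off from \eqref{eq:42} under the Klein correspondence, \eqref{eq:70} comes from Theorem~\ref{th:5} plus Lemma~\ref{th:20}, and \eqref{eq:74} from the vector-level identities $\widetilde{r}(\infty)_{|S^{+}}=\widetilde{R}(\infty)_{|S^{+}}$, $\widetilde{r}(0)_{|S^{-}}=\widetilde{R}(0)_{|S^{-}}$ together with the algebra of $q$ from Lemma~\ref{th:25}. (What both you and the paper's proof actually establish is $\hat{L}^{-}=\widetilde{R}(0)L^{-}$; the $\widetilde{R}(0)S^{-}$ in the displayed statement is a typo.)

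The one divergence is how you extract \eqref{eq:70}: the paper computes both $r(0)\Lambda^{1,0}=\hS^{+}\wedge\widetilde{R}(0)L^{-}$ and $r(\infty)\Lambda^{0,1}=\widetilde{R}(\infty)L^{+}\wedge\hS^{-}$ and intersects via \eqref{eq:72}, so the identification of $\hat{L}^{\pm}$ needs no structural input about the transformed surface; you compute only the first and match it against $\hS^{+}\wedge\hat{L}^{-}$. That works, but it tacitly assumes the orientation \eqref{eq:42} of $\hV$ is the canonical one for the central sphere congruence of the new surface (otherwise the twistor-lift identity would read $\hS^{-}\wedge(\hat{L}\cap\hS^{+})$); this is easily patched, since under $\hV\cong\hS^{+}\otimes\hS^{-}$ the two shapes are incompatible, so the match itself forces the orientation, but the intersection route avoids the point entirely. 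Also, your parenthetical justification for $\widetilde{r}(\infty)_{|S^{+}}=\widetilde{R}(\infty)_{|S^{+}}$ is insufficient as stated: agreement of $r(\infty)$ and $R(\infty)$ on $V^{\perp}_{+}=\Wedge^{2}S^{+}$ determines $\widetilde{r}(\infty)_{|S^{+}}$ only up to precomposition by an $\rSL(S^{+})$-valued factor; one must also use the middle block of Lemma~\ref{th:20} on mixed wedges $S^{+}\wedge S^{-}$ to pin down $\widetilde{r}(\infty)$ (up to the harmless sign of the double cover), exactly as the paper does---you flag this for careful tracking, and the verification is routine. Finally, the ``off-block terms'' you list as an obstacle are a non-issue: Lemma~\ref{th:20} already asserts $r(0)=\sum_{|i|\leq1}\hat{\pi}_{i}R(0)\pi_{i}$, so on $V$ it is exactly $\hat{\pi}_{0}R(0)$ and nothing further need vanish.
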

\begin{proof}
First note that the right members of \eqref{eq:71} follow at once
from the left members because $R(0)=\overline{R}(\infty)$ so that we
may take $\widetilde{R}(0)$ to be $j\circ\widetilde{R}(\infty)\circ
j^{-1}$.

For \eqref{eq:69}, first note that \eqref{eq:42} amounts to
$\Wedge^2\hat{S}^{+}=R(\infty)\Wedge^2 S^{+}$ which immediately
yields $\hat{S}^{+}=\widetilde{R}(\infty)S^{+}$.

For the rest, let $\pi^{\pm}$, $\hat{\pi}^{\pm}$ be the projections
corresponding to the decompositions $\underline{\C}^4=S^{+}\oplus
S^{-}$ and $\underline{\C}^{4}=\hat{S}^{+}\oplus \hat{S}^{-}$.  It is
easy to deduce from Lemma~\ref{th:20} that
\begin{align*}
\widetilde{r}(\infty)&=
\hat{\pi}^{+}\widetilde{R}(\infty)\pi^{+}+\hat{\pi}^{-}\widetilde{R}(\infty)\pi^{-}=
\widetilde{R}(\infty)\pi^{+}+\hat{\pi}^{-}\widetilde{R}(\infty)\pi^{-}\\
\widetilde{r}(0)&=
\hat{\pi}^{+}\widetilde{R}(0)\pi^{+}+\hat{\pi}^{-}\widetilde{R}(0)\pi^{-}=
\hat{\pi}^{+}\widetilde{R}(0)\pi^{+}+\widetilde{R}(0)\pi^{-}.
\end{align*}
In particular,
\begin{equation*}
r(\infty)\Lambda^{0,1}=\bigl(\widetilde{R}(\infty)L^{+}\bigr)\wedge
\bigl(\hat{\pi}^{-}\widetilde{R}(\infty)S^{-}\bigr)=
\bigl(\widetilde{R}(\infty)L^{+}\bigr)\wedge \hat{S}^{-},
\end{equation*}
where, for the last equality, we note that
$\hat{\pi}^{-}\widetilde{R}(\infty)$ has kernel $S^{+}$ and so
injects (therefore surjects) when restricted to $S^{-}$.  Complex
conjugation now yields
$r(0)\Lambda^{1,0}=\hat{S}^{+}\wedge
\widetilde{R}(0)L^{-}$ and then \eqref{eq:72} gives
\begin{equation*}
\hL=(\hat{S}^{+}\wedge
\widetilde{R}(0)L^{-})\cap (\widetilde{R}(\infty)L^{+}\wedge
\hat{S}^{-})=
\widetilde{R}(\infty)L^{+}\wedge \widetilde{R}(0)L^{-}.
\end{equation*}
In particular, $\hat{L}^{+}=\widetilde{R}(\infty)L^{+}$ and
$\hat{L}^{-}=\widetilde{R}(0)L^{-}$ settling \eqref{eq:70}.

Finally, we consider $q$. From \eqref{eq:65}, we see that
$q$ preserves $S^{\pm}$ while $q^{1,0}_{|S^{-}}=0$ and similarly for
$\hat{q}$.  On the other hand, from \eqref{eq:73}, we have:
\begin{equation*}
\hat{q}^{1,0}=\Ad_{r(\infty)}q^{1,0}=\widetilde{r}(\infty)q^{1,0}\widetilde{r}(\infty)^{-1}.
\end{equation*}
Since $\widetilde{r}(\infty)_{|S^{+}}=\widetilde{R}(\infty)_{|S^{+}}$
with image $\hat{S}^{+}$, this immediately yields
\begin{equation*}
\hat{q}^{1,0}_{|\hat{S}^{+}}=\widetilde{R}(\infty)q^{1,0}\widetilde{R}(\infty)^{-1}_{|\hat{S}^{+}}.
\end{equation*}
On the other hand, both $\hat{q}^{0,1}$ and
$\widetilde{R}(\infty)q^{0,1}\widetilde{R}(\infty)^{-1}$ vanish on
$\hat{S}^{+}$ and so \eqref{eq:74} is established.
\end{proof}

We summarise the development in the following corollary:
\begin{corol}\label{th:26}
Let $R$ be an untwisted dressing gauge of a $k$-perturbed harmonic
$(S,\nabla^{\mu})$ in $\unC^6$ and define
$T_{0}\in\Gamma\rGL(\underline{\H}^{2})$ by
\begin{equation*}
T_0=\widetilde{R}(\infty)\pi^{+}+\widetilde{R}(0)\pi^{-}.
\end{equation*}

Then the dressing transform $\hS$ of $S$ by $R$ is given by
\begin{equation*}
\hat{S}=T_0ST_0^{-1}.
\end{equation*}

Moreover, if $S$ is the central sphere congruence of a constrained
Willmore surface $L$ with multiplier $q$, then the dressing transform
$\hat{L}$ and its
multiplier $\hat{q}$ are given by:
\begin{align*}
\hat{L}&=T_0L\\
\hat{q}&=T_0qT_{0}^{-1}.
\end{align*}
\end{corol}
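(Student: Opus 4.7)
The plan is to reassemble the component-wise formulae \eqref{eq:71} of Proposition~\ref{th:28} into the unified expressions involving $T_0$. First, I observe that $T_0$ acts as $\widetilde{R}(\infty)$ on $S^{+}$ and as $\widetilde{R}(0)$ on $S^{-}$, since $\pi^{\pm}$ are the projections onto $S^{\pm}$. Before anything else, one should check that $T_0$ is well-defined and quaternionic linear: once a sign of $\widetilde{R}(\infty)$ is chosen, the reality $R(0)=\overline{R(\infty)}$ forces $\widetilde{R}(0)=j\widetilde{R}(\infty)j^{-1}$, and combining this with $j\pi^{\pm}j^{-1}=\pi^{\mp}$ (since $jS^{\pm}=S^{\mp}$) gives $jT_0j^{-1}=T_0$, so $T_0\in\Gamma\rGL(\underline{\H}^{2})$.

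For the first formula, \eqref{eq:69} reads $\hat{S}^{\pm}=T_0S^{\pm}$ by the definition of $T_0$. Since $S=\pm i$ on $S^{\pm}$ and $\hat{S}=\pm i$ on $\hat{S}^{\pm}$, the operator $T_0ST_0^{-1}$ acts as $\pm i$ on $\hat{S}^{\pm}$, whence $T_0ST_0^{-1}=\hat{S}$. For the second formula, the inclusions $L^{\pm}\leq S^{\pm}$ mean that $T_0$ restricted to $L^{+}$ (resp.\ $L^{-}$) coincides with $\widetilde{R}(\infty)$ (resp.\ $\widetilde{R}(0)$), so \eqref{eq:70} gives $T_0L^{\pm}=\hat{L}^{\pm}$, and summing yields $\hat{L}=T_0L$.

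For the multiplier, I invoke Lemma~\ref{th:25}: the conditions $qS=Sq$ and $q\in\Omega^{1}(\End(\H^2/L,L))$ together imply that $q$ preserves the splitting $\underline{\C}^{4}=S^{+}\oplus S^{-}$, mapping $S^{\pm}$ into $L^{\pm}\leq S^{\pm}$; the same holds for $\hat{q}$ with respect to $\hat{S}^{\pm}$. Consequently, for any $\hat{s}^{+}\in\hat{S}^{+}$, we have $T_0^{-1}\hat{s}^{+}=\widetilde{R}(\infty)^{-1}\hat{s}^{+}\in S^{+}$, then $q$ keeps the result in $S^{+}$, and finally $T_0$ acts as $\widetilde{R}(\infty)$ there; hence
\begin{equation*}
T_0qT_0^{-1}|_{\hat{S}^{+}}=\widetilde{R}(\infty)q\widetilde{R}(\infty)^{-1}|_{\hat{S}^{+}},
\end{equation*}
which matches \eqref{eq:74}$^{+}$. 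The identical argument on $\hat{S}^{-}$ (using $\widetilde{R}(0)$) gives the other half of \eqref{eq:74}, and since both $\hat{q}$ and $T_0qT_0^{-1}$ respect $\hat{S}^{+}\oplus\hat{S}^{-}$, we conclude $\hat{q}=T_0qT_0^{-1}$.

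There is no real obstacle in this argument: everything is a bookkeeping rearrangement of Proposition~\ref{th:28} once one exploits that $L^{\pm}\leq S^{\pm}$ and that $q$ preserves $S^{\pm}$. The only care required is in the sign choice for the lifts $\widetilde{R}(\infty),\widetilde{R}(0)$, handled as in the first paragraph, and in checking that the projection operators $\pi^{\pm}$ disappear because the relevant vectors already lie in the correct summand.
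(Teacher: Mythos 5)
Your argument is essentially the paper's: the corollary is a bookkeeping repackaging of Proposition~\ref{th:28}, using $L^{\pm}\leq S^{\pm}$ and the fact that $q$ commutes with $S$ (Lemma~\ref{th:25}) so that the projections $\pi^{\pm}$ become invisible, and your component-wise verifications of $\hS=T_0ST_0^{-1}$, $\hL=T_0L$ and $\hq=T_0qT_0^{-1}$ on $\hS^{\pm}$ are all correct. The one point you treat too quickly is precisely the only point the paper's own proof addresses: the invertibility of $T_0$. You conclude $T_0\in\Gamma\rGL(\underline{\H}^2)$ from $jT_0j^{-1}=T_0$, but quaternionic linearity alone does not give invertibility, and invertibility is genuinely needed (you use $T_0^{-1}$ throughout, and without it $T_0L$ need not even be a quaternionic line bundle). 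The repair is immediate from material you already cite: by \eqref{eq:69} the image of $T_0$ contains $\hS^{+}+\hS^{-}$, and $\hS^{+}\oplus\hS^{-}=\unC^4$ because $\hV^{\perp}_{+}=\Wedge^2\hS^{+}$ and $\hV^{\perp}_{-}=\Wedge^2\hS^{-}$ are complementary null lines by Definition~\ref{th:19}(c); hence $T_0$ is surjective, so an isomorphism. With that sentence added, your proof coincides with the paper's, merely with the routine unpacking of Proposition~\ref{th:28} written out in more detail.
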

\begin{proof}
The only thing to check here is that $T_0$ so defined is indeed an
isomorphism.  But the image of $T_0$ is
$\hat{S}^{+}+\hat{S}^{-}=\unC^4$ thanks to \eqref{eq:71}.
\end{proof}
Clearly there is some gauge freedom here: we could precompose $T_{0}$
with any gauge transformation which is a scalar multiple of the
identity on each of $S^{\pm}$.  We shall exploit this below.

\subsection{Darboux transforms and Riccati equations}
\label{sec:riccati-equation-1}

A transformation of Willmore surfaces in $S^{4}$ via solutions of a
Riccati equation is described in \cite[Chapter~12]{BurFerLesPed02}
while a related transform is derived by Leschke in \cite{Les11}.  We
now show that these transforms all amount to untwisted dressing by
simple factors as described in section~\ref{sec:untw-dress-gaug-1}.
Along the way, we extend the theory to constrained Willmore surfaces.

For this, we specialise the considerations of the last section to the
case where the dressing gauge $R$ is a simple factor $P_{\nu,M}$.
Recall that here $\nu\in\C^{\times}\setminus S^{1}$ and $M$ is a
$\nabla^{\nu}$-parallel null line subbundle with $M,\overline{M}$
complementary.  Equivalently, in the quaternionic formalism, $M=\Wedge^2W$
for $W$ a $\nabla^{\nu}$-parallel complex subbundle of $\unC^4$ such
that $\unC^4=W\oplus jW$ (so that $\mathrm{rank}W=2$).  If we now
define $\Gamma^W_{jW}(\mu)\in\Gamma\End(\unC^4)$ by
\begin{equation*}
\Gamma^W_{jW}(\mu)=
\begin{cases}
\mu&\text{on $W$}\\
1/\mu&\text{on $jW$}
\end{cases}
\end{equation*}
then
\begin{equation*}
\widetilde{P_{\nu,M}(\mu)}=
\Gamma^W_{jW}\biggl(\sqrt{\tfrac{(1-\nu^{*})(\mu-\nu)}{(1-\nu)(\mu-\nu^{*})}}\biggr).
\end{equation*}

We now have the following improvement on Corollary~\ref{th:26} in
case that $R$ is a simple factor:
\begin{thm}\label{th:27}
Let $(S,\nabla^{\mu})$ be $k$-perturbed harmonic in $\unC^6$. 
Fix $\nu\in\C^{\times}\setminus S^{1}$ and let $W\leq\C^4$ be a
$\nabla^{\nu}$-parallel subbundle such that $W\oplus jW=\unC^4$.
Define $\mu_0,\mu_1\in\C^{\times}$ by
\begin{equation}\label{eq:81}
\mu_0=i\frac{\nu+1}{\nu-1},\qquad 
\mu_1=i\frac{\nu^{*}+1}{\nu^{*}-1}.
\end{equation}
Further, define $X\in\Gamma\fgl(\underline{\H}^{2})$ by
\begin{equation*}
X=
\begin{cases}
\mu_0&\text{on $W$;}\\
\mu_1&\text{on $jW$.}
\end{cases}
\end{equation*}
and then $T\in\Gamma\fgl(\underline{\H}^{2})$ by
\begin{equation}\label{eq:77}
T=X-S.
\end{equation}
Then $P_{\nu,\Wedge^2W}$ is an untwisted dressing gauge for
$(S,\nabla^{\mu})$ on the open
set on which $T$ is an isomorphism and the dressing transform of 
$S$ by $P_{\nu,\Wedge^2W}$ is given by
\begin{subequations}
\label{eq:75}
\begin{equation}\label{eq:76}
\hS=TST^{-1}
\end{equation}
In particular, $\hS$ is $k$-perturbed harmonic.

Moreover, when $S$ is the central sphere congruence of a constrained
Willmore surface $L$ with multiplier $q$, then the untwisted dressing
transform of $L$ by $P_{\nu,\Wedge^2W}$ is given by
\begin{align}
\hat{L}&=TL\\
\hq&=TqT^{-1}.
\end{align}
\end{subequations}
\end{thm}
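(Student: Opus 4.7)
The plan is to reduce Theorem~\ref{th:27} to Corollary~\ref{th:26} by showing that the operator $T = X - S$ differs from the gauge $T_{0}$ of that corollary only by a scalar multiple on each of $S^{+}$ and $S^{-}$.

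First I would verify that $P_{\nu, \Wedge^{2} W}$ is an untwisted dressing gauge by invoking Theorem~\ref{th:23}: $M := \Wedge^{2} W$ is $\nabla^{\nu}$-parallel because $W$ is, and $M, \overline{M} = \Wedge^{2} jW$ are complementary null lines since $W \oplus jW = \unC^{4}$. The remaining complementarity hypothesis on $P_{\nu,M}(\infty) V^{\perp}_{+}$ and $P_{\nu,M}(0) V^{\perp}_{-}$ is equivalent to $T$ being an isomorphism, so holds on the open set in the statement. Writing $\widetilde{P_{\nu,M}(\mu)} = \Gamma^{W}_{jW}(c(\mu))$ with $c(\mu)^{2} = (1-\nu^{*})(\mu-\nu)/((1-\nu)(\mu-\nu^{*}))$, and setting $a := c(\infty)$, $b := c(0)$, Corollary~\ref{th:26} presents the dressing transform via
\begin{equation*}
T_{0} = \Gamma^{W}_{jW}(a)\pi^{+} + \Gamma^{W}_{jW}(b)\pi^{-}.
\end{equation*}

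The core computation is to match $T|_{S^{\pm}}$ with $T_{0}|_{S^{\pm}}$ up to a scalar. Decomposing $s^{+} \in S^{+}$ as $s^{+} = w + w'$ with $w \in W$, $w' \in jW$, one computes
\begin{equation*}
T(s^{+}) = (\mu_{0} - i)w + (\mu_{1} - i)w', \qquad T_{0}(s^{+}) = aw + a^{-1}w'.
\end{equation*}
The identities $\mu_{0} - i = 2i/(\nu-1)$ and $\mu_{1} - i = 2i/(\nu^{*}-1)$ drawn from \eqref{eq:81} give
\begin{equation*}
(\mu_{0} - i)/(\mu_{1} - i) = (\nu^{*}-1)/(\nu-1) = a^{2},
\end{equation*}
so $T|_{S^{+}}$ is a nonzero constant multiple of $T_{0}|_{S^{+}}$. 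The analogous calculation with $i$ replaced by $-i$, using $b^{2} = \nu(\nu^{*}-1)/(\nu^{*}(\nu-1))$, yields proportionality on $S^{-}$.

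Writing $T = T_{0} G$, the operator $G$ acts by a nonzero constant on each of $S^{\pm}$. Such $G$ commutes with $S$; it commutes with $q$ because \eqref{eq:65} implies $q$ preserves $S^{\pm}$; and it preserves $L = L^{+} \oplus L^{-}$ because $L^{\pm} \subseteq S^{\pm}$. The three identities $\hS = TST^{-1}$, $\hL = TL$, $\hq = TqT^{-1}$ therefore follow at once from their $T_{0}$-counterparts in Corollary~\ref{th:26}. The main obstacle is the algebraic matching in the third step: the formulae \eqref{eq:81} for $\mu_{0}, \mu_{1}$ are calibrated precisely so that $(\mu_{0} \mp i)/(\mu_{1} \mp i)$ agrees with $a^{2}, b^{2}$; without this delicate matching the reduction to Corollary~\ref{th:26} would fail.
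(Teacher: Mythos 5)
Your proposal is correct and follows essentially the paper's own route: the paper proves the same eigenvalue-matching claim on $W$ and $jW$ (phrased as $\widetilde{R}(0)=\lambda_0(X+i)$, $\widetilde{R}(\infty)=\lambda_1(X-i)$ for constants $\lambda_0,\lambda_1$, which is exactly your statement that $T$ agrees with $T_0$ up to a constant scalar on each of $S^{\pm}$), identifies invertibility of $T$ with the complementarity condition, and then concludes via Proposition~\ref{th:28}/Corollary~\ref{th:26} just as you do using the gauge freedom in $T_0$. The only detail you omit is the one-line check that $\mu_1=\overline{\mu_0}$, so that $X$, and hence $T$, is indeed $\fgl(\underline{\H}^2)$-valued as the statement asserts.
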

\begin{proof}
Set $R=P_{\nu,\Wedge^2W}$.  We claim that there are constants
$\lambda_0,\lambda_1\in\C^{\times}$ such that:
\begin{align}\label{eq:78}
\widetilde{R}(0)&=\lambda_0(X+i)&
\widetilde{R}(\infty)&=\lambda_1(X-i).
\end{align}
Given the claim, we immediately deduce that:
\begin{align*}
\widetilde{R}(0)_{|S^{-}}&=\lambda_0(X+i)_{|S^{-}}=\lambda_0T_{|S^{-}}\\
\widetilde{R}(\infty)_{|S^{+}}&=\lambda_1(X-i)_{|S^{+}}=\lambda_{1}T_{|S^{+}}.
\end{align*}
We note that $\mu_1=-\overline{\mu_0}$ so that $X$ and so $T$ are
$\fgl(\underline{\H}^2)$-valued. 
Moreover, $T$ is an isomorphism exactly when $\widetilde{R}(0)S^{-}\cap
\widetilde{R}(\infty)S^{+}=\set{0}$ or, equivalently, when
$R(0)V^{\perp}_{-}$ and $R(\infty)V^{\perp}_{+}$ are complementary,
that is, when $R$ is an untwisted dressing gauge.  The rest
of the theorem now follows at once from Proposition \ref{th:28}.

For the claim, note that both sides of each equation in \eqref{eq:78}
have $W$ and $jW$ as eigenspaces and so we must simply equate
eigenvalues and require that
\begin{align*}
\sqrt{\frac{\nu(1-\nu^{*})}{\nu^{*}(1-\nu)}}&=\lambda_0(\mu_0+i)&
\sqrt{\frac{1-\nu^{*}}{1-\nu}}&=\lambda_1(\mu_0-i)\\
\sqrt{\frac{\nu^{*}(1-\nu)}{\nu(1-\nu^{*})}}&=\lambda_0(\mu_1+i)&
\sqrt{\frac{1-\nu}{1-\nu^{*}}}&=\lambda_1(\mu_i-i).
\end{align*}
These amount to linear equations for the $\mu_i$ which are solved by
\eqref{eq:81} and then the $\lambda_i$ are given by
\begin{align*}
\lambda_0&=-\frac{i}{2}\sqrt{\frac{(\nu-1)(\nu^{*}-1)}{\nu\nu^{*}}}&
\lambda_1&=-\frac{i}{2}\sqrt{(\nu-1)(\nu^{*}-1)}.
\end{align*}
\end{proof}
\begin{rem}
For Willmore $L$ and, more generally, harmonic $S$, the construction
in Theorem~\ref{th:27} of $\hS$ and $\hat{L}$ from a
$\nabla^{\nu}$-parallel $W$ coincides with Leschke's \emph{$\nu$-Darboux transform}
\cite[Theorems~4.2 and 4.4]{Les11}.  She conjectured that this
procedure should amount to a dressing transform as we have just
affirmed.
\end{rem}

Theorem~\ref{th:27} offers a different perspective on untwisted
dressing by simple factors by focussing on the field of endomorphisms
$X$ rather than the dressing gauge $P_{\nu,\Wedge^2W}$.  The key
conditions on $X$, that it have constant eigenvalues and
$\nabla^{\nu}$ or $\nabla^{\nu^{*}}$-parallel eigenspaces $W, jW$, are
encapsulated in a Riccati equation with a conserved quantity:

\begin{prop}
\label{th:29}
Let $(S,\nabla^{\mu})$ be $k$-perturbed harmonic,
$X\in\Gamma\fgl(\underline{\H}^2)$ and $\mu_0\in\C\setminus\R$.  Then
$X$ has eigenvalue $\mu_0$ with rank $2$ $\nabla^{\nu}$-parallel eigenbundle
if and only if 
\begin{subequations}
\label{eq:82}
\begin{gather}
\label{eq:83}
(\mu_1-\mu_{0})dX=X(\beta^{*}-\beta)X+(\mu_0\beta-\mu_1\beta^{*})X+X(\mu_1\beta-\mu_0\beta^{*})
+\mu_0\mu_1(\beta^{*}-\beta)\\
\label{eq:84}
X^2-(\mu_0+\mu_1)X+\mu_0\mu_1=0,
\end{gather}
where $\mu_1=\overline{\mu_0}$, $\nabla^{\nu}=d+\beta$ and $\nabla^{\nu^{*}}=d+\beta^{*}$.
\end{subequations}
\end{prop}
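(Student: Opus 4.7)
The plan is to prove both implications by direct computation, using that \eqref{eq:84} encodes the algebraic shape of $X$ while \eqref{eq:83} encodes the parallelism of its eigenbundles.

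First I would unpack \eqref{eq:84}: since $\mu_0\ne\mu_1=\overline{\mu_0}$, the relation $(X-\mu_0)(X-\mu_1)=0$ forces $X$ to be fibrewise diagonalisable with eigenvalues in $\{\mu_0,\mu_1\}$, so $\underline{\C}^4=\ker(X-\mu_0)\oplus\ker(X-\mu_1)$. Setting $W:=\ker(X-\mu_0)$, the fact that $X\in\Gamma\fgl(\underline{\H}^2)$ commutes with $j$ gives $jW=\ker(X-\mu_1)$; hence $W$ and $jW$ have equal rank, and the decomposition above forces both ranks to be $2$. Thus \eqref{eq:84} is exactly the condition that $X$ has $\mu_0$ as an eigenvalue with rank-$2$ eigenbundle $W$, and $X=\mu_0 P_W+\mu_1 P_{jW}$ with projections $P_W=(X-\mu_1)/(\mu_0-\mu_1)$, $P_{jW}=(X-\mu_0)/(\mu_1-\mu_0)$. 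We are therefore reduced to showing that, under \eqref{eq:84}, $W$ is $\nabla^\nu$-parallel iff \eqref{eq:83} holds.

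For the forward direction, assume $W$ is $\nabla^\nu$-parallel; the reality condition $\nabla^{\nu^*}=\overline{\nabla^\nu}$ then yields that $jW$ is $\nabla^{\nu^*}$-parallel. For $w\in\Gamma W$, Leibniz gives $(dX)w=(\mu_0-X)\,dw$, and substituting $dw=\nabla^\nu w-\beta w$ with $\nabla^\nu w\in\Omega^1(W)$ produces $(dX)w=(X-\mu_0)\beta w$. Symmetrically, $(dX)\sigma=(X-\mu_1)\beta^*\sigma$ for $\sigma\in\Gamma(jW)$. Combining these via the projections yields the compact identity
\[
(\mu_0-\mu_1)\,dX=(X-\mu_0)\,\beta\,(X-\mu_1)-(X-\mu_1)\,\beta^*\,(X-\mu_0),
\]
and expanding both products and collecting terms reproduces \eqref{eq:83}.

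Conversely, assume \eqref{eq:83} and \eqref{eq:84}, and fix $w\in\Gamma W$. Applying \eqref{eq:83} to $w$ and using $Xw=\mu_0w$, the $\beta^*$-contributions cancel in pairs (the $X\beta^*$ and bare $\beta^*$ terms both sum to zero), leaving $(\mu_1-\mu_0)(dX)w=(\mu_1-\mu_0)(X-\mu_0)\beta w$. Combining this with the Leibniz identity $(dX)w=-(X-\mu_0)\,dw$ gives $(X-\mu_0)\nabla^\nu w=(X-\mu_0)(dw+\beta w)=0$, so $\nabla^\nu w\in\Gamma W$ and $W$ is $\nabla^\nu$-parallel. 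The main obstacle is purely organisational: the compact factored identity displayed above is the key mnemonic that makes both directions essentially immediate, but verifying its equivalence with \eqref{eq:83} by expansion, and tracking the $\beta^*$-cancellations in the converse, requires patient bookkeeping.
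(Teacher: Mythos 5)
Your proof is correct and follows essentially the same route as the paper: you establish the rank-$2$ eigenbundle decomposition from \eqref{eq:84} using $[X,j]=0$ and the non-reality of $\mu_0$, and then reduce \eqref{eq:83} to the factored identity $(\mu_1-\mu_0)\,dX=(X-\mu_1)\beta^{*}(X-\mu_0)-(X-\mu_0)\beta(X-\mu_1)$, which is exactly the paper's covariant condition $(X-\mu_0)\nabla^{\nu}(X-\mu_1)=(X-\mu_1)\nabla^{\nu^{*}}(X-\mu_0)$ written out in terms of $\beta,\beta^{*}$. The only difference is organisational: the paper gets both directions at once by noting the two sides have images in complementary eigenbundles, while you prove the forward direction via the spectral projections and the converse by evaluating \eqref{eq:83} on sections of $W$ directly, which is the same computation.
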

\begin{proof}
First note that if $X$ has rank $2$ $\mu_0$-eigenbundle $W$ then it
also has $\mu_1$-eigenbundle $jW$ with $\unC^4=W\oplus jW$ so that
the minimal polynomial of $X$ is $X^2-(\mu_0+\mu_1)X+\mu_0\mu_1$.
Conversely, if $X$ solves \eqref{eq:84} with $\mu_0$ non-real, then,
since $[X,j]=0$, $X$ is not a scalar multiple of the identity and so
has $X^2-(\mu_0+\mu_1)X+\mu_0\mu_1$ as its minimal polynomial.  It
follows at once that $X$ has eigenvalues $\mu_0,\mu_1$ with rank $2$
eigenbundles.

Thus we may assume that $X$ has rank $2$ $\mu_0$-eigenbundle $W$ and
the only remaining issue is whether $W$ is $\nabla^{\nu}$-parallel, or
equivalently, $jW$ is $\nabla^{\nu^{*}}$-parallel.  This is easily
seen to be equivalent to the demand that
\begin{align*}
(X-\mu_0)\nabla^{\nu}(X-\mu_1)&=0\\
(X-\mu_1)\nabla^{\nu^{*}}(X-\mu_0)&=0,
\end{align*}
or, since these last have image in different eigenbundles,
\begin{equation*}
(X-\mu_0)\nabla^{\nu}(X-\mu_1)=
(X-\mu_1)\nabla^{\nu^{*}}(X-\mu_0).
\end{equation*}
However, writing this out in terms of $\beta$ and $\beta^{*}$
promptly yields \eqref{eq:83}.
\end{proof}
\begin{rem}
In the situation of Theorem \ref{th:27} with $\mu_0$ given by
\eqref{eq:81}, we see that the excluded case that $\mu_0\in\R$
amounts to the already excluded case of $\nu\in S^{1}$.
\end{rem}

Observe that the coefficients in the Riccati equation \eqref{eq:83}
are all pure imaginary so that solutions with initial condition in
$\fgl(\underline{\H}^{2})$ remain in $\fgl(\underline{\H}^{2})$.  In
fact, much more is true: the Riccati equation is completely
integrable and admits \eqref{eq:84} as a first integral.  All this
uses almost nothing about the specifics of the situation beyond the
flatness of the connections $\nabla^{\nu}$ and $\nabla^{\nu^{*}}$.
Indeed, we have:
\begin{prop}
Let $\mathsf{A}$ be a complex finite-dimensional associative algebra
with unit, $\mu_0\neq\mu_1\in\C$ and $\beta,\beta^{*}$
$\mathsf{A}$-valued $1$-forms on $\Sigma$ defining flat connections:
\begin{equation*}
d\beta+\beta\wedge\beta=d\beta^{*}+\beta^{*}\wedge\beta^{*}=0.
\end{equation*}
Then the Riccati equation \eqref{eq:83}
\begin{equation*}
(\mu_1-\mu_{0})dX=X(\beta^{*}-\beta)X+(\mu_0\beta-\mu_1\beta^{*})X+X(\mu_1\beta-\mu_0\beta^{*})
+\mu_0\mu_1(\beta^{*}-\beta)
\end{equation*}
for $X:\Sigma\to\mathsf{A}$ is completely integrable.

Moreover, if $X$ is a solution and $I=X^2-(\mu_0+\mu_1)X+\mu_0\mu_1$,
then $I$ solves the linear equation
\begin{equation*}
(\mu_1-\mu_0)dI=I(\beta^{*}-\beta)X+X(\beta^{*}-\beta)I+I(\mu_1\beta-\mu_0\beta^{*})+
(\mu_0\beta-\mu_1\beta^{*})I
\end{equation*}
and so, in particular, vanishes identically if it vanishes at a single point.
\end{prop}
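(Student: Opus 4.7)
The plan is to split the proposition into two independent claims, each handled via a structural rewriting of the Riccati equation. As a preliminary step, I would first rewrite \eqref{eq:83} in the equivalent factored form
\begin{equation*}
(\mu_{1} - \mu_{0})\, dX = (X - \mu_{1})\beta^{*}(X - \mu_{0}) - (X - \mu_{0})\beta(X - \mu_{1}),
\end{equation*}
which is an immediate algebraic expansion. This is the same rewriting that underlies the identity $(X - \mu_{0})\nabla^{\nu}(X - \mu_{1}) = (X - \mu_{1})\nabla^{\nu^{*}}(X - \mu_{0})$ used in the proof of Proposition~\ref{th:29}, and it is the backbone of both arguments below.

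For complete integrability, the strategy is to linearize on a doubled bundle. Define an $M_{2}(\mathsf{A})$-valued $1$-form $\tilde{\beta} = P\beta + Q\beta^{*}$ on $\Sigma$, where $P, Q \in M_{2}(\C)$ are the scalar matrices
\begin{equation*}
P = \frac{1}{\mu_{1} - \mu_{0}}\begin{pmatrix}\mu_{1} & -1 \\ \mu_{0}\mu_{1} & -\mu_{0}\end{pmatrix},\qquad Q = \frac{1}{\mu_{1} - \mu_{0}}\begin{pmatrix}-\mu_{0} & 1 \\ -\mu_{0}\mu_{1} & \mu_{1}\end{pmatrix}.
\end{equation*}
A direct $2 \times 2$ computation verifies $P^{2} = P$, $Q^{2} = Q$, $PQ = QP = 0$ and $P + Q = 1$; these identities together with $d\beta = -\beta\wedge\beta$ and $d\beta^{*} = -\beta^{*}\wedge\beta^{*}$ give $d\tilde{\beta} + \tilde{\beta}\wedge\tilde{\beta} = 0$ at once. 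Hence $\tilde{\nabla} := d + \tilde{\beta}$ is a flat connection on the trivial rank-two $\mathsf{A}$-module bundle $\underline{\mathsf{A}}^{2}$. Given any initial condition $X_{0} \in \mathsf{A}$ at $x_{0} \in \Sigma$, take the $\tilde{\nabla}$-parallel section $(U, V)$ with $(U(x_{0}), V(x_{0})) = (1, X_{0})$; since $U$ is invertible near $x_{0}$, the formula $X = VU^{-1}$ defines $X$ locally, and a short computation using $\tilde{\nabla}(U, V) = 0$ together with $V = XU$ recovers the Riccati equation with $X(x_{0}) = X_{0}$. Uniqueness follows from standard ODE theory restricted to curves, so this yields local existence and uniqueness for arbitrary initial data.

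For the conservation law, I would compute $dI$ directly from the factored Riccati. Since $X - \mu_{0}$ and $X - \mu_{1}$ commute in $\mathsf{A}$, we have $dI = (dX)(X - \mu_{1}) + (X - \mu_{0})\, dX$. Substituting the factored Riccati and using the algebraic identities
\begin{equation*}
(X - \mu_{0})^{2} = I + (\mu_{1} - \mu_{0})(X - \mu_{0}),\qquad (X - \mu_{1})^{2} = I - (\mu_{1} - \mu_{0})(X - \mu_{1}),
\end{equation*}
the terms of order $(\mu_{1} - \mu_{0})$ cancel and what remains reorganises into the stated linear equation in $I$. Being linear and homogeneous in $I$, this equation has $I \equiv 0$ as a solution, so uniqueness gives $I \equiv 0$ on the connected component of any point at which $I$ vanishes. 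The only genuine obstacle is the bookkeeping in the derivation of the linear equation for $I$; the flatness of $\tilde{\nabla}$ reduces to the elementary verification that $P$ and $Q$ are complementary idempotents.
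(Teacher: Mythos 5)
Your proposal is correct and takes essentially the same route as the paper: the integrability argument is exactly the paper's homogeneous-coordinates linearisation of the Riccati equation on the doubled bundle $\mathsf{A}\oplus\mathsf{A}$, your complementary idempotents $P,Q$ being nothing but the diagonal projections conjugated by the paper's constant matrix (in a transposed convention, with $X=VU^{-1}$ in place of $X=PQ^{-1}$), so the flatness of $d+P\beta+Q\beta^{*}$ is the same observation. For the second claim, your factored form of \eqref{eq:83} together with the identities $(X-\mu_0)^{2}=I+(\mu_1-\mu_0)(X-\mu_0)$ and $(X-\mu_1)^{2}=I-(\mu_1-\mu_0)(X-\mu_1)$ does produce exactly the stated linear equation for $I$ (the $(\mu_1-\mu_0)$-terms cancel as you claim), so you are merely carrying out the ``straightforward computation'' that the paper leaves to the reader, and nothing is missing.
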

\begin{proof}
One can prove complete integrability by direct computation but we
offer a more conceptual and, perhaps, instructive proof based on the
well known classical observation that Riccati equations are the affine
expression of linear equations in homogeneous coordinates.  So
consider a general Riccati equation
\begin{equation*}
dX= XAX+BX+XC+D,
\end{equation*}
with $\mathsf{A}$-valued $1$-forms $A,B,C,D$.  Write $X=PQ^{-1}$ for
$P,Q:\Sigma\to\mathsf{A}$ (we can always do this near some initial
point $p_0\in\Sigma$ by taking $Q(p_0)=1$).  Then the equation
becomes
\begin{equation*}
dP-BP-DQ-PQ^{-1}(dQ+AP+CQ)=0
\end{equation*}
so that integrability is guaranteed by flatness of the connection on
the trivial $\mathsf{A}\oplus\mathsf{A}$ bundle given by
\begin{equation*}
d+
\begin{pmatrix}
-B&-D\\A&C
\end{pmatrix}
.
\end{equation*}
In the case at hand, this connection reads
\begin{equation*}
d+\frac{1}{\mu_1-\mu_0}
\begin{pmatrix}
\mu_1\beta^{*}-\mu_0\beta&\mu_0\mu_1(\beta-\beta^{*})\\
\beta^{*}-\beta&\mu_1\beta-\mu_0\beta^{*}
\end{pmatrix}=
\begin{pmatrix}
\mu_0&\mu_1\\1&1
\end{pmatrix}
\cdot\left(d+
  \begin{pmatrix}
  \beta&0\\0&\beta^{*}
  \end{pmatrix}
\right)
\end{equation*}
which is clearly flat.

The derivation of the linear equation for $I$ is a straightforward
computation which we leave to the interested reader.
\end{proof}

Let us summarise this development which amounts to a construction of
new $k$-perturbed harmonic bundles from old via solving a Riccati
equation:
\begin{corol}\label{th:30}
Let $(S,\nabla^{\mu})$ be $k$-perturbed harmonic in $\unC^{6}$ and
fix $\nu\in\C^{\times}\setminus S^1$.  Define
$\mu_0,\mu_1\in\C^{\times}$ by \eqref{eq:81}.  Fix a base-point
$x_o\in\Sigma$ and $X_o\in\fgl(\H^2)$ satisfying
$X_o^2-(\mu_0+\mu_1)X_o+\mu_0\mu_1=0$.   

There is (locally) $X\in\Gamma\fgl(\underline{\H}^2)$ solving:
\begin{gather*}
(\mu_1-\mu_{0})dX=X(\beta^{*}-\beta)X+(\mu_0\beta-\mu_1\beta^{*})X+X(\mu_1\beta-\mu_0\beta^{*})
+\mu_0\mu_1(\beta^{*}-\beta)\\
X(x_o)=X_o,
\end{gather*}
where $\nabla^{\nu}=d+\beta$ and $\nabla^{\mu}=d+\beta^{*}$.
Then, with $T=X-S$, $\hat{S}=TST^{-1}$ is $k$-perturbed harmonic on the
open set on which $T$ is invertible.

Moreover, if $S$ is the central sphere congruence of a constrained
Willmore surface $L$, then $\hat{S}$ is the central sphere
congruence of a constrained Willmore surface $TL$.
\end{corol}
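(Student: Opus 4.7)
The plan is to assemble the ingredients that have been carefully prepared in the preceding material: the integrability of the Riccati equation, the conservation law for the characteristic polynomial, the correspondence of Proposition~\ref{th:29} between Riccati solutions and parallel eigenbundles, and finally Theorem~\ref{th:27} which packages the simple-factor dressing in quaternionic terms.

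First, I would invoke the complete integrability of the Riccati equation \eqref{eq:83}, established in the proposition immediately preceding Corollary~\ref{th:30}, to produce a local solution $X\in\Gamma\fgl(\underline{\H}^2)$ with $X(x_o)=X_o$ on some neighbourhood of $x_o$. The fact that the coefficients of \eqref{eq:83} are purely imaginary (since $\mu_1=\overline{\mu_0}$ and $\beta^{*}=\overline{\beta}$) ensures that a solution with $\fgl(\underline{\H}^2)$-valued initial condition remains $\fgl(\underline{\H}^2)$-valued.

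Second, I would apply the linear equation for $I=X^{2}-(\mu_{0}+\mu_{1})X+\mu_{0}\mu_{1}$ asserted at the end of that same proposition: since $I(x_o)=0$ by hypothesis on $X_o$, the uniqueness statement forces $I\equiv 0$ on the whole domain of $X$. Thus $X$ satisfies both \eqref{eq:83} and \eqref{eq:84}, so Proposition~\ref{th:29} applies: $X$ has eigenvalue $\mu_0$ on a rank-$2$ subbundle $W\leq\underline{\C}^4$ which is $\nabla^{\nu}$-parallel, with the complex-conjugate $\mu_1$-eigenbundle $jW$ satisfying $\underline{\C}^4=W\oplus jW$ (so $M:=\Wedge^2 W$ is a $\nabla^{\nu}$-parallel null line subbundle with $M,\overline{M}$ complementary, as required in Section~\ref{sec:untw-dress-gaug-1}).

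Third, I would feed $W$ into Theorem~\ref{th:27}. The endomorphism $X$ constructed there from $W$ coincides tautologically with ours (both are the bundle map that is $\mu_0$ on $W$ and $\mu_1$ on $jW$), and accordingly $T=X-S$ is the same operator. Theorem~\ref{th:27} then asserts that precisely on the open locus where $T$ is invertible, $P_{\nu,\Wedge^2 W}$ is an untwisted dressing gauge and $\hat{S}=TST^{-1}$ is the induced dressing transform of $S$; by Proposition~\ref{th:21} (invoked inside Theorem~\ref{th:27}) this $\hat{S}$ is $k$-perturbed harmonic. The constrained Willmore statement is the final clause of Theorem~\ref{th:27}: $\hat{L}=TL$ is constrained Willmore with central sphere congruence $\hat{S}$ and multiplier $TqT^{-1}$.

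The only mildly delicate point, and the one I would take most care over, is the logical chaining: verifying that the algebraic conditions on $X_o$ plus the Riccati equation really do produce a solution meeting all of the hypotheses of Theorem~\ref{th:27} (in particular, the non-reality of $\mu_0$ — guaranteed by $\nu\notin S^1$ via \eqref{eq:81}, as already noted in the remark after Proposition~\ref{th:29} — and the rank-$2$ eigenbundle condition, which follows because $X$ commutes with $j$ and is not a scalar). No genuine obstacle arises; the corollary is essentially a repackaging of Theorem~\ref{th:27} that trades the datum of $W$ for the more intrinsic and PDE-flavoured datum of an initial condition $X_o$ integrated along a Riccati flow.
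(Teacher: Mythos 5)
Your proposal is correct and matches the paper's intent exactly: the paper offers no separate proof of Corollary~\ref{th:30}, presenting it as a summary of the preceding development, and your chaining --- local existence via the integrability proposition, vanishing of the first integral $I$ from $I(x_o)=0$, then Proposition~\ref{th:29} to get a rank-$2$ $\nabla^{\nu}$-parallel eigenbundle $W$ with $\unC^4=W\oplus jW$, then Theorem~\ref{th:27} --- is precisely the intended argument. Your attention to the side conditions ($\mu_0\notin\R$ since $\nu\notin S^1$, preservation of $\fgl(\underline{\H}^2)$, and the constant-rank eigenbundles from $[X,j]=0$) is exactly what the paper's surrounding remarks supply.
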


Of course, our Riccati equation for $X$ can be phrased as a Riccati
equation for $T$ the form of which, for general $\nu$, is not very
edifying.  However, when $\nu$ is real, matters simplify considerably
and we not only recover the Darboux transforms of Willmore surfaces
described in \cite[Section~12.3]{BurFerLesPed02} but generalise them
to the constrained Willmore case.

So let $L$ be a constrained Willmore surface with multiplier $q$ and
central sphere congruence $S$.  We then have
$\nabla^{\mu}=d+(\mu-1)(\cA_S+q)^{1,0}+(\mu^{-1}-1)(\cA_S+q)^{0,1}$.
Fix $\nu\in\R\setminus\set{\pm 1}$ so that $\nu^{*}=1/\nu$ and then
\begin{equation*}
\mu_0=i \frac{\nu+1}{\nu-1}=-\mu_1.
\end{equation*}
Set 
\begin{equation}
\label{eq:80}
\rho=-\frac{(\nu-1)^2}{4\nu}=\frac{2-\nu-\nu^{*}}{4}
\end{equation}
so that
\begin{equation*}
\rho^{-1}-1=\mu_0^2=\mu_1^2.
\end{equation*}
We may now use \eqref{eq:56} along with \eqref{eq:68} and
\eqref{eq:65} to write the Riccati equation \eqref{eq:83} for $X$ as
the following Riccati equation for $T=X-S$:
\begin{equation}
\label{eq:85}
dT=\rho T\bigl(2*(\cA_S+q)\bigr)T-4\rho Tq-2*(\cQ_S+q)
\end{equation}
with first integral $(T+S)^2-(\rho^{-1}-1)$.

In this situation, $\rho^{-1}<1$ and, conversely, if $\rho^{-1}<1$,
we can rearrange \eqref{eq:80} and recover $\nu,\nu^{*}$ as the two
real roots of $\mu^2+2(2\rho-1)\mu+1=0$.  We therefore have:
\begin{thm}
\label{th:31}
Let $L$ be a constrained Willmore surface in $S^4$ with multiplier
$q$ and central sphere congruence $S$.  Let $\rho\in\R$ with
$\rho^{-1}<1$ and let $T\in\fgl(\underline{\H}^2)$ be a solution of
the integrable Riccati equation \eqref{eq:85} with
$(T+S)^2-(\rho^{-1}-1)=0$ at one, and hence every, point.

Then $\hat{L}=TL$ is also a constrained Willmore surface with
multiplier $TqT^{-1}$ and central sphere congruence $TST^{-1}$ on the
open set on which $T$ is invertible.

Moreover, $\hat{L}$ is an untwisted dressing transform of $L$ by a
simple factor.
\end{thm}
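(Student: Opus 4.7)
The plan is to recognise \eqref{eq:85} as the change of variable $X=T+S$ applied to the Riccati equation \eqref{eq:83}, so that Theorem~\ref{th:31} becomes a direct application of Corollary~\ref{th:30}, or more precisely of Proposition~\ref{th:29} together with Theorem~\ref{th:27}.

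First I would unwind the real parameter $\rho$: since $\rho^{-1}<1$, the quadratic $\mu^{2}+2(2\rho-1)\mu+1=0$ has discriminant $16\rho(\rho-1)>0$ and so two real roots $\nu,\nu^{*}=1/\nu$, neither equal to $\pm 1$. With $\mu_{0},\mu_{1}$ defined by \eqref{eq:81}, reality of $\nu$ forces $\mu_{1}=-\mu_{0}$ and $\mu_{0}^{2}=\rho^{-1}-1$. Under $X=T+S$, the algebraic constraint $X^{2}-(\mu_{0}+\mu_{1})X+\mu_{0}\mu_{1}=0$ of \eqref{eq:84} then reduces to $X^{2}=\rho^{-1}-1$, which is precisely the first integral $(T+S)^{2}-(\rho^{-1}-1)=0$ that is hypothesised in the theorem.

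The main step is to verify that the Riccati equation \eqref{eq:85} for $T$ is equivalent to the Riccati equation \eqref{eq:83} for $X=T+S$. For this, I would substitute $dX=dT+dS$, use $dS=2(*\cQ_{S}-*\cA_{S})$ from \eqref{eq:56}, and plug in the explicit untwisted connection forms for a constrained Willmore surface,
\begin{align*}
\beta &= (\nu-1)(\cA_{S}+q)^{1,0}+(\nu^{-1}-1)(\cA_{S}+q)^{0,1},\\
\beta^{*} &= (\nu^{-1}-1)(\cA_{S}+q)^{1,0}+(\nu-1)(\cA_{S}+q)^{0,1}.
\end{align*}
Because $\mu_{0}+\mu_{1}=0$, the two $X$-linear terms on the right of \eqref{eq:83} combine into a commutator $-\mu_{0}[X,\beta+\beta^{*}]$, and \eqref{eq:80} gives $\beta+\beta^{*}=-4\rho(\cA_{S}+q)$. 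The regrouping of the $X$-quadratic term $X(\beta^{*}-\beta)X$ and the $X$-free term $\mu_{0}\mu_{1}(\beta^{*}-\beta)=(1-\rho^{-1})(\beta^{*}-\beta)$ into the advertised right-hand side $\rho T(2*(\cA_{S}+q))T-4\rho Tq-2*(\cQ_{S}+q)$ is carried out using the identities $S\cA_{S}=-\cA_{S}S$, $S\cQ_{S}=\cQ_{S}S$ from \eqref{eq:68} and $Sq=qS=*q$ from \eqref{eq:65}, together with $X=T+S$. This bookkeeping is the principal computational obstacle, but it is entirely routine and is essentially the calculation already flagged in the paragraph preceding the theorem statement.

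Given this equivalence, Proposition~\ref{th:29} supplies a rank-$2$ $\nabla^{\nu}$-parallel eigenbundle $W\leq\unC^{4}$ of $X$ with eigenvalue $\mu_{0}$ and $\unC^{4}=W\oplus jW$. Theorem~\ref{th:27}, applied with this real $\nu$ and this $W$, then shows that on the open set where $T=X-S$ is invertible, $P_{\nu,\Wedge^{2}W}$ is an untwisted dressing gauge for $(S,\nabla^{\mu})$, and the resulting untwisted dressing transform of $L$ is exactly $\hat{L}=TL$, with central sphere congruence $TST^{-1}$ and multiplier $TqT^{-1}$. This simultaneously establishes both the constrained Willmore assertion and the identification of $\hat{L}$ with a simple-factor dressing transform.
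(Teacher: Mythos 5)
Your proposal is correct and follows essentially the same route as the paper: Theorem~\ref{th:31} is stated there as a summary of the immediately preceding derivation, which consists precisely of the substitution $X=T+S$, the parameter identifications $\mu_1=-\mu_0$, $\mu_0^2=\rho^{-1}-1$, $\beta+\beta^{*}=-4\rho(\cA_S+q)$ recovering $\nu,\nu^{*}$ as the real roots of $\mu^2+2(2\rho-1)\mu+1=0$, and the rewriting of \eqref{eq:83} as \eqref{eq:85} via \eqref{eq:56}, \eqref{eq:68}, \eqref{eq:65}, after which Proposition~\ref{th:29} and Theorem~\ref{th:27} give the conclusion exactly as you argue.
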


For Willmore surfaces (thus $q=0$), this construction of $\hat{L}$ is
the Darboux transform of \cite[Section~12.3]{BurFerLesPed02} with the
caveat that their Riccati equation swops the roles of $\cA_S$ and
$\cQ_S$ and their $T$ is the inverse of ours.  This is, of course, a
well-known symmetry of the Riccati equation.

\begin{bibdiv}
\begin{biblist}

\bib{Bla29}{book}{
      author={Blaschke, W.},
       title={Vorlesungen \"uber {D}ifferentialgeometrie {I}{I}{I}},
      series={Grundlehren Math.},
   publisher={Springer},
     address={Berlin},
        date={1929},
      volume={29},
}

\bib{Boh10}{article}{
      author={Bohle, Christoph},
       title={Constrained {W}illmore tori in the 4-sphere},
        date={2010},
        ISSN={0022-040X},
     journal={J. Differential Geom.},
      volume={86},
      number={1},
       pages={71\ndash 131},
         url={http://projecteuclid.org/getRecord?id=euclid.jdg/1299766684},
      review={\MR{2772546}},
}

\bib{BohPedPin09a}{article}{
      author={Bohle, Christoph},
      author={Pedit, Franz},
      author={Pinkall, Ulrich},
       title={The spectral curve of a quaternionic holomorphic line bundle over
  a 2-torus},
        date={2009},
        ISSN={0025-2611},
     journal={Manuscripta Math.},
      volume={130},
      number={3},
       pages={311\ndash 352},
         url={http://dx.doi.org/10.1007/s00229-009-0288-x},
      review={\MR{2545521 (2011f:53086)}},
}

\bib{BohPetPin08}{article}{
      author={Bohle, Christoph},
      author={Peters, G.~Paul},
      author={Pinkall, Ulrich},
       title={Constrained {W}illmore surfaces},
        date={2008},
        ISSN={0944-2669},
     journal={Calc. Var. Partial Differential Equations},
      volume={32},
      number={2},
       pages={263\ndash 277},
         url={http://dx.doi.org/10.1007/s00526-007-0142-5},
      review={\MR{MR2389993 (2009a:53098)}},
}

\bib{BraDor09}{article}{
      author={Brander, David},
      author={Dorfmeister, Josef},
       title={Generalized {DPW} method and an application to isometric
  immersions of space forms},
        date={2009},
        ISSN={0025-5874},
     journal={Math. Z.},
      volume={262},
      number={1},
       pages={143\ndash 172},
         url={http://dx.doi.org/10.1007/s00209-008-0367-9},
      review={\MR{2491604 (2009m:37186)}},
}

\bib{Bry82}{article}{
      author={Bryant, Robert~L.},
       title={Conformal and minimal immersions of compact surfaces into the
  {$4$}-sphere},
        date={1982},
        ISSN={0022-040X},
     journal={J. Differential Geom.},
      volume={17},
      number={3},
       pages={455\ndash 473},
         url={http://projecteuclid.org/getRecord?id=euclid.jdg/1214437137},
      review={\MR{MR679067 (84a:53062)}},
}

\bib{Bry84}{article}{
      author={Bryant, Robert~L.},
       title={A duality theorem for {W}illmore surfaces},
        date={1984},
        ISSN={0022-040X},
     journal={J. Differential Geom.},
      volume={20},
      number={1},
       pages={23\ndash 53},
         url={http://projecteuclid.org/getRecord?id=euclid.jdg/1214438991},
      review={\MR{MR772125 (86j:58029)}},
}

\bib{Bur06}{incollection}{
      author={Burstall, F.~E.},
       title={Isothermic surfaces: conformal geometry, {C}lifford algebras and
  integrable systems},
        date={2006},
   booktitle={Integrable systems, geometry, and topology},
      series={AMS/IP Stud. Adv. Math.},
      volume={36},
   publisher={Amer. Math. Soc.},
     address={Providence, RI},
       pages={1\ndash 82},
      review={\MR{MR2222512 (2008b:53006)}},
}

\bib{BurDorLesQui13}{article}{
      author={Burstall, F.~E.},
      author={Dorfmeister, J.~F.},
      author={Leschke, K.},
      author={Quintino, A.~C.},
       title={Darboux transforms and simple factor dressing of constant mean
  curvature surfaces},
        date={2013},
        ISSN={0025-2611},
     journal={Manuscripta Math.},
      volume={140},
      number={1-2},
       pages={213\ndash 236},
         url={http://dx.doi.org/10.1007/s00229-012-0537-2},
      review={\MR{3016491}},
}

\bib{BurFerLesPed02}{book}{
      author={Burstall, F.~E.},
      author={Ferus, D.},
      author={Leschke, K.},
      author={Pedit, F.},
      author={Pinkall, U.},
       title={Conformal geometry of surfaces in {${\it S}^4$} and quaternions},
      series={Lecture Notes in Mathematics},
   publisher={Springer-Verlag},
     address={Berlin},
        date={2002},
      volume={1772},
        ISBN={3-540-43008-3},
      review={\MR{MR1887131 (2004a:53058)}},
}

\bib{BurFerPedPin93}{article}{
      author={Burstall, F.~E.},
      author={Ferus, D.},
      author={Pedit, F.},
      author={Pinkall, U.},
       title={Harmonic tori in symmetric spaces and commuting {H}amiltonian
  systems on loop algebras},
        date={1993},
        ISSN={0003-486X},
     journal={Ann. of Math. (2)},
      volume={138},
      number={1},
       pages={173\ndash 212},
         url={http://dx.doi.org/10.2307/2946637},
      review={\MR{MR1230929 (94m:58057)}},
}

\bib{BurCal}{unpublished}{
      author={Burstall, F.E.},
      author={Calderbank, D.},
       title={Conformal submanifold geometry IV--V},
        status={In preparation},
}

\bib{BurCal10}{article}{
      author={Burstall, F.E.},
      author={Calderbank, D.M.J.},
       title={{C}onformal submanifold geometry {I}--{I}{I}{I}},
       eprint={arXiv:1006.5700 [math.DG]},
       url={http://arxiv.org/abs/1006.5700},
        date={2010},
}

\bib{BurPedPin02}{incollection}{
      author={Burstall, Francis},
      author={Pedit, Franz},
      author={Pinkall, Ulrich},
       title={Schwarzian derivatives and flows of surfaces},
        date={2002},
   booktitle={Differential geometry and integrable systems ({T}okyo, 2000)},
      series={Contemp. Math.},
      volume={308},
   publisher={Amer. Math. Soc.},
     address={Providence, RI},
       pages={39\ndash 61},
      review={\MR{MR1955628 (2004f:53010)}},
}

\bib{BurDonPedPin11}{article}{
      author={Burstall, Francis~E.},
      author={Donaldson, Neil~M.},
      author={Pedit, Franz},
      author={Pinkall, Ulrich},
       title={Isothermic submanifolds of symmetric {$R$}-spaces},
        date={2011},
        ISSN={0075-4102},
     journal={J. Reine Angew. Math.},
      volume={660},
       pages={191\ndash 243},
         url={http://dx.doi.org/10.1515/crelle.2011.075},
      review={\MR{2855825}},
}

\bib{Dar87}{book}{
      author={Darboux, G.},
       title={Le\c cons sur la th\'eorie g\'en\'erale des surfaces et les
  applications g\'eom\'etriques du calcul infinit\'esimal. {P}arts 1 and 2},
   publisher={Gauthier-Villars},
     address={Paris},
        date={1887},
}

\bib{EelSal85}{article}{
      author={Eells, J.},
      author={Salamon, S.},
       title={Twistorial construction of harmonic maps of surfaces into
  four-manifolds},
        date={1985},
        ISSN={0391-173X},
     journal={Ann. Scuola Norm. Sup. Pisa Cl. Sci. (4)},
      volume={12},
      number={4},
       pages={589\ndash 640 (1986)},
         url={http://www.numdam.org/item?id=ASNSP_1985_4_12_4_589_0},
      review={\MR{MR848842 (87i:58042)}},
}

\bib{Eji88}{article}{
      author={Ejiri, Norio},
       title={Willmore surfaces with a duality in {$S^N(1)$}},
        date={1988},
        ISSN={0024-6115},
     journal={Proc. London Math. Soc. (3)},
      volume={57},
      number={2},
       pages={383\ndash 416},
         url={http://dx.doi.org/10.1112/plms/s3-57.2.383},
      review={\MR{MR950596 (89h:53117)}},
}

\bib{FerLesPedPin01}{article}{
      author={Ferus, D.},
      author={Leschke, K.},
      author={Pedit, F.},
      author={Pinkall, U.},
       title={Quaternionic holomorphic geometry: {P}l\"ucker formula, {D}irac
  eigenvalue estimates and energy estimates of harmonic {$2$}-tori},
        date={2001},
        ISSN={0020-9910},
     journal={Invent. Math.},
      volume={146},
      number={3},
       pages={507\ndash 593},
         url={http://dx.doi.org/10.1007/s002220100173},
      review={\MR{MR1869849 (2003a:53057)}},
}

\bib{Fri88}{article}{
      author={Friedrich, Thomas},
       title={The geometry of {$t$}-holomorphic surfaces in {$S^4$}},
        date={1988},
        ISSN={0025-584X},
     journal={Math. Nachr.},
      volume={137},
       pages={49\ndash 62},
         url={http://dx.doi.org/10.1002/mana.19881370106},
      review={\MR{MR968986 (90a:53075)}},
}

\bib{Ger31}{article}{
      author={Germain, S},
       title={M\'{e}moire sur la coubure des surfaces},
        date={1831},
     journal={Crelle's J.},
      volume={7},
       pages={1\ndash 29},
}

\bib{Her03}{book}{
      author={Hertrich-Jeromin, Udo},
       title={Introduction to {M}\"obius differential geometry},
      series={London Mathematical Society Lecture Note Series},
   publisher={Cambridge University Press},
     address={Cambridge},
        date={2003},
      volume={300},
        ISBN={0-521-53569-7},
      review={\MR{MR2004958 (2004g:53001)}},
}

\bib{Hit90}{article}{
      author={Hitchin, N.~J.},
       title={Harmonic maps from a {$2$}-torus to the {$3$}-sphere},
        date={1990},
        ISSN={0022-040X},
     journal={J. Differential Geom.},
      volume={31},
      number={3},
       pages={627\ndash 710},
         url={http://projecteuclid.org/getRecord?id=euclid.jdg/1214444631},
      review={\MR{MR1053342 (91d:58050)}},
}

\bib{Khe12}{article}{
      author={Khemar, Idrisse},
       title={Elliptic integrable systems: a comprehensive geometric
  interpretation},
        date={2012},
        ISSN={0065-9266},
     journal={Mem. Amer. Math. Soc.},
      volume={219},
      number={1031},
       pages={x+217},
         url={http://dx.doi.org/10.1090/S0065-9266-2012-00651-4},
      review={\MR{2985701}},
}

\bib{KobIno05}{article}{
      author={Kobayashi, Shimpei},
      author={Inoguchi, Jun-Ichi},
       title={Characterizations of {B}ianchi-{B}\"acklund transformations of
  constant mean curvature surfaces},
        date={2005},
        ISSN={0129-167X},
     journal={Internat. J. Math.},
      volume={16},
      number={2},
       pages={101\ndash 110},
      review={\MR{MR2121843 (2006d:53006)}},
}

\bib{Les11}{incollection}{
      author={Leschke, K.},
       title={Harmonic map methods for {W}illmore surfaces},
        date={2011},
   booktitle={Harmonic maps and differential geometry},
      series={Contemp. Math.},
      volume={542},
   publisher={Amer. Math. Soc.},
     address={Providence, RI},
       pages={203\ndash 212},
         url={http://dx.doi.org/10.1090/conm/542/10709},
      review={\MR{2796651 (2012f:53122)}},
}

\bib{Mah02}{thesis}{
      author={Mahler, A.},
       title={Bianchi--{B}\"acklund transforms for constant mean curvature
  surfaces with umbilics. {T}heory and applications},
        date={2002},
        type={Toledo PhD thesis},
}

\bib{MarNev12}{article}{
      author={Marques, Fernando~C.},
      author={Neves, Andr\'{e}},
       title={{Min-Max theory and the Willmore conjecture}},
        date={2012},
       pages={95},
      eprint={arxiv:1202.6036 [math.DG]},
         url={http://arxiv.org/abs/1202.6036},
}

\bib{Pac10}{article}{
      author={Pacheco, Rui},
       title={Bianchi-{B}\"acklund transforms and dressing actions, revisited},
        date={2010},
        ISSN={0046-5755},
     journal={Geom. Dedicata},
      volume={146},
       pages={85\ndash 99},
         url={http://dx.doi.org/10.1007/s10711-009-9427-5},
      review={\MR{2644272 (2011m:53014)}},
}

\bib{Poh76}{article}{
      author={Pohlmeyer, K.},
       title={Integrable {H}amiltonian systems and interactions through
  quadratic constraints},
        date={1976},
        ISSN={0010-3616},
     journal={Comm. Math. Phys.},
      volume={46},
      number={3},
       pages={207\ndash 221},
      review={\MR{MR0408535 (53 \#12299)}},
}

\bib{Qui09}{thesis}{
      author={Quintino, A.C.},
       title={Constrained Willmore surfaces: Symmetries of a M\"{o}bius
  invariant integrable system},
        type={Ph.D. Thesis, University of Bath},
        date={2008},
}

\bib{Quia}{article}{
      author={Quintino, A.C.},
       title={Constrained Willmore surfaces: Symmetries of a M\"{o}bius
  invariant integrable system---Based on the author's PhD thesis},
date={2009},
eprint={arXiv:0912.5402v1 [math.DG]},
}

\bib{Qui11}{article}{
      author={Quintino, A.C.},
       title={Spectral deformation and {B}\"acklund transformation of
  constrained {W}illmore surfaces},
        date={2011},
        ISSN={0926-2245},
     journal={Differential Geom. Appl.},
      volume={29},
      number={suppl. 1},
       pages={S261\ndash S270},
         url={http://dx.doi.org/10.1016/j.difgeo.2011.04.051},
      review={\MR{2832028 (2012h:53142)}},
}

\bib{Ric97}{thesis}{
      author={Richter, J.},
       title={Conformal maps of a {R}iemannian surface onto the space of
  quaternions},
        date={1997},
        note={Berlin PhD thesis},
}

\bib{Rig87}{article}{
      author={Rigoli, Marco},
       title={The conformal {G}auss map of submanifolds of the {M}\"obius
  space},
        date={1987},
        ISSN={0232-704X},
     journal={Ann. Global Anal. Geom.},
      volume={5},
      number={2},
       pages={97\ndash 116},
         url={http://dx.doi.org/10.1007/BF00127853},
      review={\MR{MR944775 (89e:53083)}},
}

\bib{Roz48}{article}{
      author={Rozenfel{\cprime}d, B.~A.},
       title={Conformal differential geometry of families of {$C_m$} in
  {$C_n$}},
        date={1948},
     journal={Mat. Sbornik N.S.},
      volume={23(65)},
       pages={297\ndash 313},
      review={\MR{MR0028092 (10,403f)}},
}

\bib{Ter08}{article}{
   author={Terng, Chuu-Lian},
   title={Geometries and symmetries of soliton equations and integrable
   elliptic equations},
   conference={
      title={Surveys on geometry and integrable systems},
   },
   book={
      series={Adv. Stud. Pure Math.},
      volume={51},
      publisher={Math. Soc. Japan},
      place={Tokyo},
   },
   date={2008},
   pages={401--488},
   review={\MR{2509801 (2010i:37174)}},
}

\bib{TerUhl00}{article}{
      author={Terng, Chuu-Lian},
      author={Uhlenbeck, Karen},
       title={B\"acklund transformations and loop group actions},
        date={2000},
        ISSN={0010-3640},
     journal={Comm. Pure Appl. Math.},
      volume={53},
      number={1},
       pages={1\ndash 75},
  url={http://dx.doi.org/10.1002/(SICI)1097-0312(200001)53:1<1::AID-CPA1>3.3.CO;2-L},
      review={\MR{MR1715533 (2000k:37116)}},
}

\bib{Tho23}{article}{
      author={Thomsen, G.},
       title={Ueber konforme {G}eometrie {I}: {G}rundlagen der konformen
  {F}laechentheorie},
        date={1923},
     journal={Hamb. Math. Abh.},
      volume={3},
       pages={31\ndash 56},
}

\bib{Uhl89}{article}{
      author={Uhlenbeck, Karen},
       title={Harmonic maps into {L}ie groups: classical solutions of the
  chiral model},
        date={1989},
        ISSN={0022-040X},
     journal={J. Differential Geom.},
      volume={30},
      number={1},
       pages={1\ndash 50},
         url={http://projecteuclid.org/getRecord?id=euclid.jdg/1214443286},
      review={\MR{MR1001271 (90g:58028)}},
}

\bib{Wil65}{article}{
      author={Willmore, T.~J.},
       title={Note on embedded surfaces},
        date={1965},
     journal={An. \c Sti. Univ. ``Al. I. Cuza'' Ia\c si Sec\c t. I a Mat.
  (N.S.)},
      volume={11B},
       pages={493\ndash 496},
      review={\MR{MR0202066 (34 \#1940)}},
}

\bib{ZahSab79}{article}{
      author={Zaharov, V.~E.},
      author={{\v{S}}abat, A.~B.},
       title={Integration of the nonlinear equations of mathematical physics by
  the method of the inverse scattering problem. {II}},
        date={1979},
        ISSN={0374-1990},
     journal={Funktsional. Anal. i Prilozhen.},
      volume={13},
      number={3},
       pages={13\ndash 22},
      review={\MR{MR545363 (82m:35137)}},
}

\bib{ZakMik78}{article}{
   author={Zakharov, V. E.},
   author={Mikha{\u\i}lov, A. V.},
   title={Relativistically invariant two-dimensional models of field theory
   which are integrable by means of the inverse scattering problem method},
   journal={},
   pages={},
   translation={
      language={Russian},
      journal={Soviet Phys. JETP},
      volume={74},
      date={1978},
      number={6},
      pages={1017--1027 (1979)},
   },
   review={\MR{524247 (80c:81115)}},
}
\end{biblist}
\end{bibdiv}

\end{document}